\newtheorem{theorem}{Theorem}[section]
\newtheorem{corollary}[theorem]{Corollary}
\newtheorem{lemma}[theorem]{Lemma}
\newtheorem{proposition}[theorem]{Proposition}
\newtheorem*{claim}{Claim}
\theoremstyle{definition}
\newtheorem*{definition}{Definition}
\newtheorem{remark}[theorem]{Remark}
\newcommand{\Z}{\mathds{Z}}
\newcommand{\Q}{\mathds{Q}}
\newcommand{\R}{\mathds{R}}
\newcommand{\id}{\operatorname{Id}}
\newcommand{\Hom}{\operatorname{Hom}}
\newcommand{\colim}{\operatorname{colim}}
\newcommand{\LZ}{\Z[t^{\pm 1}]}
\newcommand{\Bl}{\operatorname{Bl}}
\newcommand{\Ext}{\operatorname{Ext}}
\newcommand{\coker}{\operatorname{coker}}
\newcommand{\op}{\operatorname}
\newcommand{\bsm}{\left(\begin{smallmatrix}}
\newcommand{\esm}{\end{smallmatrix}\right)}
\newcommand{\sm}{\smallsetminus}
\newcommand{\bp}{\begin{pmatrix}}
\newcommand{\ep}{\end{pmatrix}}
\newcommand{\smfrac}[2]{\mbox{\footnotesize$\displaystyle\frac{#1}{#2}$}} 
\newcommand{\tmfrac}[2]{\mbox{\large$\frac{#1}{#2}$}} 
\newcommand{\wh}{\widehat}
\subjclass[2010]{57M25, 57M27, 57N13, 57N35}
\keywords{slice discs, homotopy ribbon, Alexander module}
\DeclareSymbolFont{EulerScript}{U}{eus}{m}{n}
\DeclareSymbolFontAlphabet\mathscr{EulerScript}
\begin{document}

\title{Characterisation of homotopy ribbon discs}

\author{Anthony Conway}
\address{Max Plank Institute for Mathematics, Bonn, Germany}
\email{anthonyyconway@gmail.com}
\author{Mark Powell}
\address{Department of Mathematical Sciences, Durham University, United Kingdom}
\email{mark.a.powell@durham.ac.uk}
\maketitle
\begin{abstract}
Let $\Gamma$ be either the infinite cyclic group $\Z$ or the Baumslag-Solitar group~$\Z \ltimes~\Z[\tmfrac{1}{2}]$. Let $K$ be a slice knot admitting a slice disc $D$ in the 4-ball whose exterior has fundamental group~$\Gamma$.  We classify the $\Gamma$-homotopy ribbon slice discs for $K$ up to topological ambient isotopy rel.\ boundary.
In the infinite cyclic case, there is a unique equivalence class of such slice discs.
When $\Gamma$ is the Baumslag-Solitar group, there are at most two equivalence classes of $\Gamma$-homotopy ribbon discs, and
at most one such slice disc for each lagrangian of the Blanchfield pairing of~$K$.
\end{abstract}

\section{Introduction}

A knot $K \subset S^3$ is \emph{slice} if it bounds a locally flat disc $D \subset D^4$.
The goal of this paper is to study the classification of the slice discs of a given slice knot up to topological ambient isotopy rel.\ boundary.
An initial observation is that one can connect sum a given slice disc with any 2-knot, to obtain infinitely many mutually non-isotopic slice discs for every slice knot, as can be seen by considering the fundamental group of the exterior.

We therefore restrict to slice discs~$D$ for which~$\pi_1(D^4 \setminus D)$ is a fixed group.
We also add a technical \emph{homotopy ribbon} condition on our discs by requiring that the inclusion map~$X_K:=S^3 \setminus \nu K \hookrightarrow N_D:= D^4 \setminus \nu D$ induces a surjection~$\pi_1(X_K) \twoheadrightarrow~\pi_1(N_D)$.
A knot is \emph{homotopy ribbon} if it admits such a \emph{homotopy ribbon disc.}
The (open) \emph{topological ribbon-slice conjecture} asserts that every slice knot is homotopy ribbon.

\begin{definition}
Given a group~$\Gamma$, a homotopy ribbon disc~$D$ is~\emph{$\Gamma$-homotopy ribbon if~$\pi_1(N_D)\cong~\Gamma$.}
An oriented knot is \emph{$\Gamma$-homotopy ribbon} if it bounds a~$\Gamma$-homotopy ribbon disc.
\end{definition}

We consider two cases:~the infinite cyclic group~$\Z$ and the Baumslag-Solitar group~\[G:= B(1,2) = \langle a,c \mid aca^{-1}=c^2 \rangle \cong \Z \ltimes \Z[\tfrac{1}{2}],\] where the generator $a$ of $\Z$ acts on $\Z[\tfrac{1}{2}]$ via multiplication by $2$.
Since both of these groups are solvable, and hence good in the sense of Freedman,
topological surgery in dimension~$4$ and the 5-dimensional $s$-cobordism theorem can be applied to classify $\Gamma$-homotopy ribbon discs.
A first question, however, is whether such discs exist.

The following theorem, whose two parts are respectively due to Freedman~\cite{FreedmanTopology} (see also~\cite[Theorem~11.7B]{FreedmanQuinn} and \cite[Appendix~A]{Garoufalidis-Teichner}) and Friedl-Teichner~\cite[Theorem~1.3]{FriedlTeichner} answers this question in the affirmative.
Let $M_K$ denote the zero-framed surgery manifold of~$K$.  Note that $\partial N_D = M_K$ for every slice disc $D$ for $K$.

\begin{theorem}\label{thm:existence-conditions}
Let~$K$ be an oriented knot.
\begin{enumerate}
\item If~$K$ has Alexander polynomial~$\Delta_K(t) \doteq 1$, then~$K$ is~$\Z$-homotopy ribbon.
\item If there is a surjection $\pi_1(M_K) \twoheadrightarrow G$ such that~$\operatorname{Ext}_{\Z[G]}^1(H_1(M_K;\Z[G]),\Z[G])=0$, then~$K$ is~$G$-homotopy ribbon.
\end{enumerate}
\end{theorem}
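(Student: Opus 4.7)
The plan is to construct, in each case, a compact topological 4-manifold~$N$ with~$\partial N = M_K$, $\pi_1(N) \cong \Gamma$, and the~$\Z$-homology of~$S^1$ (with the meridian of~$K$ generating~$H_1$), such that moreover the~$\Gamma$-cover of~$N$ is integrally acyclic in positive degrees. Granting such~$N$, I would attach a suitably framed~$2$-handle to~$M_K \subset \partial N$ along a meridian~$\mu$ of~$K$. The resulting~$4$-manifold~$W$ has boundary~$S^3$ (since the~$2$-handle undoes the~$0$-surgery), is simply connected (since~$\mu$ generates~$\pi_1(N) = \Gamma$), and has trivial homology by Mayer--Vietoris. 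Freedman's topological Poincar\'e theorem in dimension~$4$ then identifies~$W \cong D^4$, and the cocore of the added~$2$-handle is a locally flat slice disc~$D$ for~$K$ with exterior~$N$. That~$D$ is~$\Gamma$-homotopy ribbon follows because~$\pi_1(X_K) \twoheadrightarrow \pi_1(M_K) \twoheadrightarrow \pi_1(N) = \Gamma$.

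To build~$N$ in part~(1), I would start with~$M_K \times [0,1]$, perform surgeries on curves in the top boundary that generate~$\ker(\pi_1(M_K) \twoheadrightarrow \Z)$, and stack on further handles designed to arrange the~$\Z$-cover to be integrally acyclic. The algebraic surgery obstruction is controlled by the Alexander module~$H_1(M_K;\LZ)$; the hypothesis~$\Delta_K \doteq 1$ forces this module to vanish, so the surgery kernel is an algebraically hyperbolic~$\LZ$-form. Freedman's disc-embedding theorem over the good group~$\Z$ converts this algebraic data into disjointly embedded~$2$-spheres whose surgery produces the desired~$N$.

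Part~(2) follows the same outline with~$\Gamma = G$, using the given surjection~$\pi_1(M_K) \twoheadrightarrow G$ to define the required~$G$-cover. The Ext-vanishing hypothesis~$\Ext_{\Z[G]}^1(H_1(M_K;\Z[G]),\Z[G]) = 0$ plays the role that~$\Delta_K \doteq 1$ played in part~(1): combined with a universal coefficient argument it forces the surgery obstruction over~$\Z[G]$ to be algebraically hyperbolic. Since~$G$ is solvable, hence good in the sense of Freedman, the disc-embedding theorem applies once more, providing the~$2$-spheres needed to perform surgery and construct~$N$.

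The main obstacle is the Freedman-surgery step in each case: upgrading an algebraic hyperbolic decomposition of the surgery kernel to a collection of honestly embedded, pairwise disjoint~$2$-spheres. For part~(1) the algebra is classical and the disc-embedding theorem over~$\Z$ is well-documented, so the work is essentially bookkeeping. For part~(2) the principal difficulty is non-commutative homological algebra over~$\Z[G]$: the Ext hypothesis is tailored precisely so that the algebraic surgery obstruction vanishes and a hyperbolic basis can be located, and Friedl--Teichner's contribution lies in executing this delicate algebraic preparation in a form compatible with Freedman's machinery.
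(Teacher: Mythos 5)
The paper does not actually prove Theorem~\ref{thm:existence-conditions}; it quotes part~(1) from Freedman (\cite{FreedmanQuinn}, see also~\cite{Garoufalidis-Teichner}) and part~(2) from~\cite[Theorem~1.3]{FriedlTeichner}, so there is no internal proof to compare against. Your outline is the standard strategy used in those references: produce a compact $4$-manifold $N$ with $\partial N=M_K$, $\pi_1(N)\cong\Gamma$ and the integral homology of $S^1$, cap it off with a $2$-handle along a meridian, identify the result with $D^4$ via Freedman's topological Poincar\'e theorem, and take the cocore as the slice disc; the homotopy ribbon property holds because $\pi_1(M_K)\to\Gamma$ is onto by construction and the meridian normally generates. (One small simplification: you need not demand in advance that the $\Gamma$-cover of $N$ be acyclic; it suffices that $N$ be a homology circle with $\pi_1\cong\Gamma$ and $\pi_1(M_K)\twoheadrightarrow\pi_1(N)$, and acyclicity of the cover is then automatic, as in Lemma~\ref{lem:BG}.)

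As a standalone proof, however, your sketch omits exactly the substantive steps. In part~(1), $\Delta_K\doteq 1$ gives $H_1(M_K;\LZ)=0$, which (after preliminary surgeries below the middle dimension) makes the surgery kernel a nonsingular quadratic form over $\LZ$, but it does \emph{not} by itself make it hyperbolic: the obstruction lies in $L_4(\LZ)\cong L_4(\Z)\cong 8\Z$, detected by the signature, and must be normalised, e.g.\ by connected sum with copies of Freedman's $E_8$-manifold or a better choice of null-bordism over $S^1$, before the sphere-embedding machinery for the good group $\Z$ can be invoked. In part~(2) the gaps are larger: one needs an initial null-bordism of $(M_K,\phi)$ over $BG$ (e.g.\ $\Omega_3(BG)=0$), a computation of $L_4(\Z[G])$ and $L_5(\Z[G])$ via the Ranicki--Shaneson sequence for the twisted Laurent extension, and, most importantly, the passage from $\Ext^1_{\Z[G]}(H_1(M_K;\Z[G]),\Z[G])=0$ to a lagrangian/hyperbolic structure on the $\Z[G]$-coefficient surgery kernel, which in Friedl--Teichner goes through the equivalence of the Ext condition with the vanishing of the $\Z[G]$-coefficient Blanchfield pairing. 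Your write-up defers precisely this to ``Friedl--Teichner's contribution'', which is circular if intended as an independent proof --- though it is consistent with the paper, which likewise simply cites their Theorem~1.3 for this part.
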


Since we now know that~$\Gamma$-homotopy ribbon discs exist for the groups~$\Gamma=\Z$ and~$G$, we return to our initial objective: their classification.

\subsection{$\Z$-homotopy ribbon discs}
In the~$\Z$ case, we show that the $\Z$-homotopy ribbon disc for an Alexander polynomial $1$ knot~$K$ is essentially unique. More precisely, we prove the following.

\begin{theorem}
\label{thm:IntroZTheorem}
Any two~$\Z$-homotopy ribbon discs for the same $\Z$-homotopy ribbon knot are ambiently isotopic rel.\ boundary.
\end{theorem}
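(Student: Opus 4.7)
The plan is to produce a homeomorphism of disc exteriors $N_{D_0} \cong N_{D_1}$ rel $\partial$, and then upgrade this to an ambient isotopy of the discs; throughout we use that $\Z$ is a good group, so that 4-dimensional topological surgery and the Freedman--Quinn $s$-cobordism theorem are available. As a preliminary step, the hypothesis $\Delta_K \doteq 1$ combined with the homotopy-ribbon condition forces the Alexander modules $H_*(N_{D_i}; \LZ)$ to vanish in positive degrees. Hence the infinite cyclic cover $\widetilde{N_{D_i}}$ is a simply-connected, acyclic 4-manifold; by Hurewicz and Whitehead it is contractible, so each $N_{D_i}$ is aspherical and homotopy equivalent to $S^1$. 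Since both inclusions $M_K \hookrightarrow N_{D_i}$ induce the abelianisation $\pi_1(M_K) \twoheadrightarrow \Z$, obstruction theory produces a rel-$\partial$ homotopy equivalence $f \colon N_{D_0} \to N_{D_1}$ extending $\id_{M_K}$; it is automatically simple, since $\operatorname{Wh}(\Z) = 0$.

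Next, I would construct a rel-$\partial$ topological $s$-cobordism $W^5$ from $N_{D_0}$ to $N_{D_1}$ realising $f$, and apply the Freedman--Quinn $s$-cobordism theorem. Existence of $W$ is governed by the relative surgery exact sequence
\[
L_5(\Z[\Z]) \to \mathcal{S}^s(N_{D_1}, \partial) \to \mathcal{N}(N_{D_1}, \partial) \to L_4(\Z[\Z]).
\]
The normal invariant of $f$ vanishes: both $N_{D_i}$ have trivial intersection form and zero Kirby--Siebenmann invariant (as they embed in $D^4$), which forces the relevant component of $\mathcal{N}(N_{D_1}, \partial)$ to be trivial. The residual surgery obstruction in $L_5(\Z[\Z])$ is analysed via Shaneson's splitting, whereby the obstruction is detected by a Blanchfield-pairing invariant of $K$ and therefore vanishes because $\Delta_K \doteq 1$ renders the Blanchfield form trivial.

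Finally, once the exteriors are homeomorphic rel boundary, uniqueness of topological tubular neighbourhoods of locally flat discs allows the homeomorphism $N_{D_0} \to N_{D_1}$ to extend to a self-homeomorphism $\Phi \colon D^4 \to D^4$ which fixes $S^3$ pointwise and satisfies $\Phi(D_0) = D_1$. The topological Alexander trick provides an isotopy $\Phi_t$ rel $S^3$ from $\id$ to $\Phi$; then $\Phi_t(D_0)$ is the desired ambient isotopy rel boundary from $D_0$ to $D_1$. The main difficulty will be the middle step: the Kirby--Siebenmann vanishing is automatic, but extracting triviality of the $L_5(\Z[\Z])$ surgery obstruction from the triviality of the Blanchfield form requires care, and is the substantive input from $\Delta_K \doteq 1$.
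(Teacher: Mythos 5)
The first and last steps of your outline match the paper's argument (asphericality of the exteriors and a rel.\ boundary homotopy equivalence via obstruction theory; then regluing the $2$-handle and the Alexander trick to convert a rel.\ boundary homeomorphism of exteriors into an ambient isotopy of discs). The normal-invariant computation is also essentially the paper's: the $H^2(N_{D_1},M_K;\Z_2)$-summand dies because the exterior is a homology circle, and the $H^4$-summand is $\tfrac18(\sigma(M)-\sigma(X))$, which vanishes since $H_2(N_{D_i};\Z)=0$.

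The genuine gap is your treatment of the odd-dimensional obstruction. Having equal normal invariants only places $\id$ and $f$ in the same orbit of the $L_5(\Z[\Z])$-action on the rel.\ boundary structure set; equivalently, the normal bordism $W$ you obtain carries a surgery obstruction $\sigma(F)\in L_5(\Z[\Z])$ that there is no reason to expect to vanish, and it depends on the choice of $W$. Your proposed mechanism for killing it is not correct: under Shaneson splitting $L_5(\Z[\Z])\cong L_4(\Z)\oplus L_5(\Z)\cong L_4(\Z)\cong 8\Z$, the obstruction is detected by an ordinary signature, not by any Blanchfield-type invariant of $K$, so ``$\Delta_K\doteq 1$ makes the Blanchfield form trivial'' is not the substantive input here --- indeed the same surgery step in the paper works verbatim for $\Gamma=\Z\ltimes\Z[\tfrac12]$, where the Blanchfield form is nontrivial; the polynomial-one hypothesis is only used to identify the homotopy type and guarantee compatibility. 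What is actually needed (and what the paper does) is not to prove $\sigma(F)=0$ but to change the normal bordism: form connected sums along a meridional circle with $\sigma(F)$ copies of the degree one normal map $S^1\times(\pm E_8)\to S^1\times S^4$, which shifts the obstruction by the generator of $L_4(\Z)\subset L_5(\Z[\Z])$ and produces a new normal map $F'$ with $\sigma(F')=0$ agreeing with $F$ on the boundary. One then surgers $F'$ to a homotopy equivalence, obtains an $s$-cobordism (using $\operatorname{Wh}(\Z)=0$), and applies the Freedman--Quinn $5$-dimensional $s$-cobordism theorem for the good group $\Z$. If you prefer the structure-set language, the missing statement is precisely that the image of $L_5(\Z[\Z])$ acts trivially on the rel.\ boundary structure set because its generator is realised by the closed normal map $S^1\times E_8\to S^1\times S^4$; without this (or the explicit connected-sum argument), your middle step does not go through.
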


Theorem~\ref{thm:IntroZTheorem} accords with Freedman's other famous result that every knotted~$S^2 \hookrightarrow S^4$ with~$\pi_1(S^4 \setminus S^2)=\Z$ is topologically isotopic to the standard unknotted embedding~$S^2 \hookrightarrow S^4$~\cite{FreedmanQuinn}.
We also note that Theorem~\ref{thm:IntroZTheorem} has recently been applied by Hayden in order to construct pairs of exotic ribbon discs~\cite{Hayden}.
We now move on to the~$\Z \ltimes \Z[\tmfrac{1}{2}]$ case.

\subsection{$\Z \ltimes \Z[\tmfrac{1}{2}]$-homotopy ribbon discs}
Before stating our second result, some additional notions are needed.
Recall that~$M_K$ denotes the~$0$-framed surgery along an oriented knot~$K$, that~$H_1(M_K;\Z[t^{\pm 1}])$ coincides with the Alexander module of~$K$ and that if~$D$ is a slice disc for~$K$, then~$\partial N_D=M_K$.
If~$D$ is a homotopy ribbon disc for a knot~$K$, then we call~\[P_D:=\ker( H_1(M_K;\Z[t^{\pm 1}]) \to H_1(N_D;\Z[t^{\pm 1}]))\] the \emph{lagrangian induced by~$D$}.
The reason for this terminology is that~$P_D$ is a lagrangian for the Blanchfield pairing~$\Bl(K)$ of~$K$, i.e.~$P_D=P_D^\perp$.
Note that if $K$ is merely slice, then this only need hold over the PID $\Q[t^{\pm 1}]$.

Our second main result expresses the classification of~$\Z \ltimes \Z[\tmfrac{1}{2}]$-homotopy ribbon discs using the induced lagrangians of the Blanchfield form.

\begin{theorem}
\label{thm:Intro}
Set~$G:=\Z \ltimes \Z[\tmfrac{1}{2}]$ and let~$K$ be a $G$-homotopy ribbon knot.
If two~$G$-homotopy ribbon discs for~$K$ induce the same lagrangian, then they are ambiently isotopic rel.\ boundary.
\end{theorem}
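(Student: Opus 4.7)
The plan is to prove Theorem~\ref{thm:Intro} via the topological $5$-dimensional $s$-cobordism theorem, which is available here because $G$ is solvable and hence good in Freedman's sense. Given two $G$-homotopy ribbon discs $D$ and $D'$ for $K$ inducing the same lagrangian $P_D = P_{D'}$, I would construct a relative cobordism $W$ between the exteriors $N_D$ and $N_{D'}$ whose side boundary is $M_K \times [0,1]$, and then verify that $W$ is an $s$-cobordism. Applying the topological $s$-cobordism theorem would produce a homeomorphism $N_D \cong N_{D'}$ rel.~$M_K$, and uniqueness of topological tubular neighbourhoods of locally flat $2$-discs in $D^4$ would upgrade this homeomorphism to the desired ambient isotopy of discs rel.~boundary.

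To build $W$, I would first form the closed $4$-manifold $Z := N_D \cup_{M_K} (-N_{D'})$ together with its natural reference map $Z \to BG$ coming from the identifications $\pi_1(N_D) \cong G \cong \pi_1(N_{D'})$. If the bordism class of $(Z, Z \to BG)$ in $\Omega_4^{\mathrm{STOP}}(BG)$ vanishes, then $Z$ bounds a $5$-manifold $Y$ over $BG$, and splitting $Y$ along a codimension-one submanifold whose boundary is a lift of $M_K \times [0,1]$ produces the desired cobordism. The bordism vanishing should be manageable because $B(1,2)$ has small cohomological dimension, with connect-sum adjustments by $\mathbb{CP}^2$-factors available to correct the signature if needed.

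The substance of the argument lies in verifying that $(W, N_D, N_{D'})$ is an $s$-cobordism. The $\pi_1$-condition holds by construction, provided $W$ is assembled with a $G$-cover extending those of $N_D$ and $N_{D'}$. The crucial homological condition $H_*(W, N_D; \Z[G]) = 0$ is where the equal-lagrangian hypothesis enters: a Mayer-Vietoris reduction compares the $\Z[G]$-homology of the two exteriors, and since the $G$-homotopy ribbon condition forces $H_1(N_D; \Z[G]) = H_1(N_{D'}; \Z[G]) = 0$, the question reduces to showing that the kernels of $H_1(M_K; \Z[G]) \to H_1(N_D; \Z[G])$ and $H_1(M_K; \Z[G]) \to H_1(N_{D'}; \Z[G])$ coincide. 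The lagrangians $P_D, P_{D'}$ live in the Alexander module $H_1(M_K; \LZ)$, which arises from $H_1(M_K; \Z[G])$ by base change along the abelianisation $G \twoheadrightarrow \Z$, so one must argue that the assumption $P_D = P_{D'}$ at the $\LZ$-level forces equality of the corresponding kernels at the $\Z[G]$-level; the Baumslag-Solitar relation $aca^{-1} = c^2$ should control the extra module data coming from the commutator subgroup $\Z[\tfrac{1}{2}] \subset G$. Vanishing of the Whitehead torsion $\tau(W, N_D) \in \op{Wh}(G)$ should follow from a computation that $\op{Wh}(B(1,2))$ is trivial, accessible either via Waldhausen's Bass-Heller-Swan decomposition for HNN extensions or via Farrell-Jones for solvable groups.

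The main obstacle I anticipate is precisely this translation between $\LZ$- and $\Z[G]$-coefficient homology: the lagrangian hypothesis is phrased entirely at the level of the Alexander module, whereas the $s$-cobordism theorem demands vanishing at the level of the full group ring $\Z[G]$. Making this transition rigorous—most naturally through the Cartan-Leray spectral sequence for the intermediate $\Z[\tfrac{1}{2}]$-cover lying between the $G$-cover and the infinite cyclic cover of $M_K$, together with a careful analysis of how the generator $a$ acts to propagate the equality of kernels upstairs—is where I expect the principal technical work to concentrate.
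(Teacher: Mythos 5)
Your overall skeleton (produce an $s$-cobordism between the exteriors rel.\ $M_K$, apply Freedman's $s$-cobordism theorem for the good group $G$, then recover an ambient isotopy of discs) is the same as the paper's, but two essential steps are missing, and you have flagged the first one yourself without resolving it. First, the passage from ``$P_D=P_{D'}$'' to compatible $\pi_1$-data over $M_K$: you need identifications $\pi_1(N_D)\cong G\cong\pi_1(N_{D'})$ commuting with the two inclusion-induced maps from $\pi_1(M_K)$ even to define your reference map $Z\to BG$, and your proposed route (Mayer--Vietoris with $\Z[G]$-coefficients, comparing kernels of $H_1(M_K;\Z[G])\to H_1(N_{D};\Z[G])$, Cartan--Leray for the intermediate cover) is left entirely open. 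The paper avoids $\Z[G]$-coefficients here altogether: since $G$ is metabelian, $\pi_1(N_D)=\pi_1(N_D)/\pi_1(N_D)^{(2)}\cong\Z\ltimes H_1(N_D;\LZ)$, and the homotopy ribbon condition gives $H_1(N_D;\LZ)\cong H_1(M_K;\LZ)/P_D$ (Lemma~\ref{lem:AlexModule}, Corollary~\ref{cor:FromModuleToGroup}); hence equality of lagrangians immediately yields a compatible isomorphism (Proposition~\ref{prop:ReformulateCompatibility}). Without some such argument your construction does not get off the ground.

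Second, and more seriously, a null-bordism of $Z=N_D\cup_{M_K}(-N_{D'})$ over $BG$ is nowhere near an $s$-cobordism, and ``then verify that $W$ is an $s$-cobordism'' conceals the entire surgery problem. After surgering $\pi_1$ of the interior, the obstruction to killing $H_*(W,N_D;\Z[G])$ is an odd-dimensional surgery obstruction in $L_5(\Z[G])$ (and to even pose it you need degree-one normal map data to $N_D\times[0,1]$, which STOP-bordism over $BG$ does not provide). The paper's proof of Theorem~\ref{thm:Goal} handles exactly this: asphericity of the exteriors (Lemma~\ref{lem:BG}) gives a rel.\ boundary homotopy equivalence, the normal-invariant computation (Propositions~\ref{prop:GTOP} and~\ref{prop:ReducedNormal}) produces the normal bordism, and Lemma~\ref{lem:Lgroup} identifies $L_5(\Z[G])\cong L_4(\Z)\cong 8\Z$, the obstruction then being killed by connected sum with copies of $S^1\times E_8$ along a meridional circle. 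None of this appears in your outline. Two smaller points: your proposed ``connect-sum adjustments by $\mathbb{CP}^2$'' are not available, since $Z$ is determined by the discs and cannot be modified (in any case $\sigma(Z)=0$ by Novikov additivity, so the signature is not the issue); and the final upgrade from a rel.\ boundary homeomorphism $N_D\cong N_{D'}$ to an ambient isotopy is done in the paper not by uniqueness of tubular neighbourhoods but by re-attaching the $2$-handle $D_i\times D^2$ and applying the Alexander trick (Lemma~\ref{lem:Prelim}), which is the argument you should cite or reproduce.
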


Before describing applications of Theorem~\ref{thm:Intro}, we outline the common strategy behind the proofs of Theorems~\ref{thm:IntroZTheorem} and~\ref{thm:Intro}.

We say that two slice discs~$D_1$ and~$D_2$ for a slice knot~$K$ are \emph{compatible} if there is an isomorphism~$f \colon \pi_1(N_{D_1}) \xrightarrow{\cong} \pi_1(N_{D_2})$ that satisfies~$f~\circ~\iota_{D_1}=\iota_{D_2}$, where~$\iota_{D_k} \colon \pi_1(M_K) \to \pi_1(N_{D_k})$ denotes the inclusion induced map for~$k=1,2$.
Observe that two~$\Z$-homotopy ribbon discs for an oriented $\Z$-homotopy ribbon knot are necessarily compatible, while Proposition~\ref{prop:ReformulateCompatibility} shows that $G$-homotopy ribbon discs are compatible if and only if they induce the same lagrangian.

Theorems~\ref{thm:IntroZTheorem} and~\ref{thm:Intro} are both consequences of the following result.

\begin{theorem}
\label{thm:Goal}
Use~$\Gamma$ to denote either~$\Z$ or~$\Z \ltimes \Z[\tmfrac{1}{2}]$ and let~$K$ be a $\Gamma$-homotopy ribbon knot.
 If~$D_1$ and~$D_2$ are two compatible $\Gamma$-homotopy ribbon discs for~$K$, then~$D_1$ and~$D_2$ are ambiently isotopic rel.\ boundary.
\end{theorem}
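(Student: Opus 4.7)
The plan is to reduce Theorem~\ref{thm:Goal} to the construction of a homeomorphism $\Phi \colon N_{D_1} \xrightarrow{\cong} N_{D_2}$ restricting to the identity on $M_K$. Given such a $\Phi$, I would extend it across the tubular neighborhoods $\nu D_i \cong D_i \times D^2$, which are unique by normal bundle theory for locally flat $2$-discs in $4$-manifolds, to obtain a self-homeomorphism $\widetilde{\Phi} \colon D^4 \to D^4$ fixing $S^3$ pointwise and sending $D_1$ to $D_2$. The topological Alexander trick then produces an isotopy from $\widetilde{\Phi}$ to $\id_{D^4}$ rel $S^3$, and tracking the image of $D_1$ along this isotopy delivers the desired ambient isotopy rel boundary.

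To produce $\Phi$, I appeal to the $5$-dimensional topological $s$-cobordism theorem, which is available because $\Z$ and $G$ are solvable and hence good in the sense of Freedman. The aim is to build a relative $s$-cobordism $W$ from $N_{D_1}$ to $N_{D_2}$ rel $M_K$: a compact topological $5$-manifold with $\partial W = N_{D_1} \cup_{M_K \times \{0\}} (M_K \times [0,1]) \cup_{M_K \times \{1\}} (-N_{D_2})$ such that both inclusions $N_{D_i} \hookrightarrow W$ are simple homotopy equivalences. Such a $W$ arises as a thickened mapping cylinder of a simple homotopy equivalence $F \colon N_{D_1} \to N_{D_2}$ rel $M_K$, so the real task is to construct $F$.

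The compatibility hypothesis furnishes an isomorphism $f \colon \pi_1(N_{D_1}) \xrightarrow{\cong} \pi_1(N_{D_2})$ intertwining the inclusions of $\pi_1(M_K)$. Standard obstruction theory upgrades $f$ to a map $F \colon N_{D_1} \to N_{D_2}$ rel $M_K$ realising $f$ on $\pi_1$; the obstructions to extending across $k$-cells lie in $H^k(N_{D_1}, M_K; \pi_{k-1}(N_{D_2}))$, and the homotopy ribbon condition, under which $N_{D_1}$ has the homotopy type of a $2$-complex relative to $M_K$, sharply restricts which classes appear. To verify that $F$ is a homotopy equivalence, I would compute the $\Z[\Gamma]$-homology of each $N_{D_i}$: in the $\Z$ case the Alexander polynomial~$1$ hypothesis forces $H_*(N_{D_i}; \LZ) \cong H_*(S^1; \LZ)$, while in the $G$ case the $\Ext$-vanishing of Theorem~\ref{thm:existence-conditions}, combined with equality of induced lagrangians (equivalent to compatibility by Proposition~\ref{prop:ReformulateCompatibility}), pins down $H_*(N_{D_i}; \Z[G])$ identically on both sides. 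A mapping-cone argument then shows $F$ is a $\Z[\Gamma]$-homology equivalence, and lifting to universal covers combined with Whitehead's theorem promotes this to a genuine homotopy equivalence.

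The main obstacle will be arranging that $F$ is a \emph{simple} homotopy equivalence, so that the resulting $h$-cobordism is in fact an $s$-cobordism. For $\Gamma = \Z$ this is automatic since $\op{Wh}(\Z) = 0$. For $\Gamma = G$ the plan is either to invoke a vanishing result for $\op{Wh}(G)$ from the algebraic $K$-theory literature on solvable Baumslag-Solitar groups, or to construct $F$ carefully via a handle trade between matched handle decompositions of $N_{D_1}$ and $N_{D_2}$ so that its torsion is manifestly trivial. Once a simple $F$ is available, the topological $s$-cobordism theorem delivers $W \cong N_{D_1} \times [0,1]$ rel boundary, yielding the required $\Phi$ and completing the proof.
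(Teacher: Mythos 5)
Your reduction of the theorem to a rel.\ boundary homeomorphism of the exteriors (extend over a $2$-handle, then apply the Alexander trick) matches the paper's Lemma~\ref{lem:Prelim}, and your plan to promote the compatible isomorphism to a rel.\ boundary homotopy equivalence is close in spirit to Lemma~\ref{lem:BG} (the paper does this more robustly by proving the exteriors are aspherical $K(\Gamma,1)$'s; your assertion that $N_{D_1}$ has the homotopy type of a $2$-complex rel.\ $M_K$ does not follow from the homotopy ribbon condition and is not needed once asphericity is known). The genuine gap is the passage from a (simple) homotopy equivalence $F\colon N_{D_1}\to N_{D_2}$ rel.\ $M_K$ to a relative $s$-cobordism: a ``thickened mapping cylinder'' of $F$ is not a $5$-manifold, and there is no general procedure converting a simple homotopy equivalence of $4$-manifolds rel.\ boundary into an $s$-cobordism. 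If there were, every such homotopy equivalence with good fundamental group would be homotopic to a homeomorphism, which is false in general: normal invariants and the Kirby--Siebenmann obstruction intervene (e.g.\ homotopy equivalent, non-homeomorphic $4$-manifolds exist even with trivial fundamental group).

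This missing step is precisely where surgery theory enters, and it is the bulk of the paper's proof. One must (i) show that $f$ and $\id_{N_{D_1}}$ represent the same class in the relative normal set $\mathcal{N}_{TOP}(N_{D_1},M_K)\cong H^4(N_{D_1},M_K;\Z)\oplus H^2(N_{D_1},M_K;\Z_2)$; here the $H^2$ term vanishes because $N_{D_1}$ is a homology circle, and the $H^4$ term is detected by $\tfrac{1}{8}(\sigma(N_{D_2})-\sigma(N_{D_1}))=0$ (Propositions~\ref{prop:GTOP} and~\ref{prop:ReducedNormal}), which produces a degree one normal bordism $W\to N_{D_1}\times[0,1]$ rel.\ $M_K$; and (ii) compute and kill the surgery obstruction $\sigma(F)\in L_5(\Z[\Gamma])\cong L_4(\Z)\cong 8\Z$ (Lemma~\ref{lem:Lgroup}), which the paper does by connected sum along a meridional circle with copies of $S^1\times(\pm E_8)$, after which $5$-dimensional surgery yields an $s$-cobordism and the $s$-cobordism theorem for the good group $\Gamma$ gives the homeomorphism rel.\ boundary. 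Your proposal skips both (i) and (ii), so it never actually produces the $s$-cobordism on which the rest of your argument depends; as written the proof does not go through.
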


Theorem~\ref{thm:Goal} is proved by applying the surgery programme to the disc exteriors~$N_{D_1}$ and~$N_{D_2}$.
We briefly recall the steps of this well known classification programme.
Let~$D_1$ and~$D_2$ be two compatible~$\Gamma$-homotopy ribbon discs.
\begin{enumerate}
\item  In Lemma~\ref{lem:BG}, we establish that~$N_{D_1}$ and~$N_{D_2}$ are homotopy equivalent. In fact, they are aspherical and both $K(\Gamma,1)$ spaces.
\item Fixing a homotopy equivalence~$f \colon N_{D_1} \to N_{D_2}$, Proposition~\ref{prop:ReducedNormal} constructs a cobordism~$(W,N_{D_1},N_{D_2})$  relative to~$M_K$, and a degree one normal map
$$(F,\id_{N_{D_1}},f) \colon (W,N_{D_1},N_{D_2}) \to (N_{D_1} \times [0,1],N_{D_1},N_{D_1}).~$$
This is a surgery problem: we wish to know whether~$F$ is normally bordant to a (simple) homotopy equivalence. There is an obstruction~$\sigma(F)$ in the (simple) quadratic L-group~$L_5(\Z[\Gamma])$ to solving this problem.
\item After analysing the surgery obstruction~$\sigma(F)$ in Lemma~\ref{lem:Lgroup}, we take connected sums along circles with Freedman's~$E_8$ manifold times $S^1$, in order to replace~$F$ by a new degree one normal map with vanishing surgery obstruction.
\item  We perform 5-dimensional surgery to obtain an $s$-cobordism. Since~$\Gamma$ is a good group, the topological $s$-cobordism theorem in dimension~$5$ implies that~$N_{D_1}$ and~$N_{D_2}$ are homeomorphic rel.\ boundary.
\item Lemma~\ref{lem:Prelim} shows if the disc exteriors~$N_{D_1}$ and~$N_{D_2}$ are homeomorphic rel.\ boundary, then the discs~$D_1$ and~$D_2$ are ambiently isotopic rel.\ boundary.
\end{enumerate}

%
%
%

\subsection{Characterisation of homotopy ribbon discs}\label{subsection:characterisation-intro}

Theorems~\ref{thm:existence-conditions}~(1) and \ref{thm:IntroZTheorem}, combined with the fact that every knot with a $\Z$-homotopy ribbon disc has Alexander polynomial 1, yield the following characterisation.

\begin{theorem}\label{thm:characterisation-Z-intro}
  A knot $K$ has $\Delta_K(t) \doteq 1$ if and only if $K$ has a $\Z$-homotopy ribbon disc, unique up to ambient isotopy rel.\ boundary.
\end{theorem}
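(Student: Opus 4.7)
The plan is to assemble the three ingredients flagged in the paragraph that introduces the statement. Theorem~\ref{thm:existence-conditions}(1) already provides the implication $\Delta_K(t) \doteq 1 \Rightarrow K$ bounds a $\Z$-homotopy ribbon disc, and Theorem~\ref{thm:IntroZTheorem} supplies uniqueness up to ambient isotopy rel.\ boundary. What remains is to verify the converse: if $K$ admits a $\Z$-homotopy ribbon disc $D$, then $\Delta_K(t) \doteq 1$.

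To prove this converse, I would exploit the induced lagrangian. Since $\pi_1(N_D) \cong \Z$, the infinite cyclic cover $\widetilde{N_D}$ coincides with the universal cover and is therefore simply connected, so Hurewicz yields
\[
H_1(N_D;\Z[t^{\pm 1}]) \;\cong\; H_1(\widetilde{N_D};\Z) \;=\; 0.
\]
It follows that the induced lagrangian
\[
P_D \;=\; \ker\bigl(H_1(M_K;\Z[t^{\pm 1}]) \to H_1(N_D;\Z[t^{\pm 1}])\bigr)
\]
coincides with the entire Alexander module $H_1(M_K;\Z[t^{\pm 1}])$. Because $D$ is homotopy ribbon, the discussion preceding Theorem~\ref{thm:Intro} ensures $P_D = P_D^\perp$ with respect to $\Bl(K)$ (not merely after tensoring with $\Q[t^{\pm 1}]$). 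Combining these two identifications with nondegeneracy of the Blanchfield pairing forces $P_D = P_D^\perp = 0$, so the Alexander module is trivial and $\Delta_K(t) \doteq 1$.

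No substantial obstacle arises: once Theorems~\ref{thm:existence-conditions}(1) and \ref{thm:IntroZTheorem} and the introductory lagrangian statement are in hand, the argument is essentially bookkeeping. The one minor point worth flagging is the standard equivalence between vanishing of the Alexander module of a knot and $\Delta_K(t) \doteq 1$, which follows from the existence of a square presentation matrix whose determinant computes $\Delta_K(t)$ up to units in $\Z[t^{\pm 1}]$.
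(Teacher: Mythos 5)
Your proposal is correct, and its skeleton is exactly the paper's: existence from Theorem~\ref{thm:existence-conditions}(1), uniqueness from Theorem~\ref{thm:IntroZTheorem}, plus the converse that a $\Z$-homotopy ribbon disc forces $\Delta_K(t)\doteq 1$. The one place you diverge is that the paper simply invokes this converse as a known fact, whereas you supply an argument; your argument is sound, but note that it leans on the assertion from the introduction that a homotopy ribbon disc induces an \emph{integral} lagrangian, $P_D=P_D^\perp$ over $\Z[t^{\pm 1}]$, which the paper states without proof (it is a nontrivial fact from the literature), together with nondegeneracy of $\Bl(K)$. An alternative, self-contained route using only the paper's own tools is the duality/UCSS computation of Lemma~\ref{lem:BG} and Proposition~\ref{prop:necessary-condition}: since $\pi_1(N_D)\cong\Z$ gives $H_1(N_D;\Z[t^{\pm 1}])=0$, one gets
\[
P_D \;\cong\; H_2(N_D,M_K;\Z[t^{\pm 1}]) \;\cong\; H^2(N_D;\Z[t^{\pm 1}]) \;\cong\; \overline{\Ext^1_{\Z[t^{\pm 1}]}\bigl(H_1(N_D;\Z[t^{\pm 1}]),\Z[t^{\pm 1}]\bigr)} \;=\;0,
\]
and since $P_D$ is the whole Alexander module (as you observe), the module vanishes and $\Delta_K(t)\doteq 1$ without appealing to the lagrangian property or to nondegeneracy of the Blanchfield pairing. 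Either way the statement follows; your version is fine provided you are content to quote the integral lagrangian claim, your final reduction (module vanishes iff $\Delta_K\doteq 1$, via an invertible square presentation matrix) being standard.
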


Now set $G:=\Z \ltimes \Z[\tmfrac{1}{2}]$.
In Section~\ref{section:characterising}, we shall combine Theorem~\ref{thm:Intro} with~\cite[Theorem~1.3]{FriedlTeichner} and further analysis to completely characterise $G$-homotopy ribbon discs.
To state our characterisation, we introduce some notation.
Given a $\Z[t^{\pm 1}]$-module $P$, we write $\overline{P}$ for~$P$ with the~$\Z[t^{\pm 1}]$-module structure induced by $t \cdot x=t^{-1}x$.
Note that $\Z[\tmfrac{1}{2}]$ is isomorphic as an abelian group to both $\Z[t^{\pm 1}]/(t-2)$ and~$\Z[t^{\pm 1}]/(2t-1)$, but the action of $t$ in the~$\Z[t^{\pm 1}]$-module structure differs -- either multiplication by $2$ or $\tmfrac{1}{2}$ respectively.

Let $P \subseteq H_1(M_K;\Z[t^{\pm 1}])$ be a submodule of $H_1(M_K;\Z[t^{\pm 1}])$ which is isomorphic to either~$\Z[t^{\pm 1}]/(t-2)$ or $\Z[t^{\pm 1}]/(2t-1)$, and such that $H_1(M_K;\Z[t^{\pm 1}])/P \cong~\overline{P}$.
In particular,~$\overline{P}$ is again isomorphic to $\Z[\tmfrac{1}{2}]$ for one of the module structures.
Associated with this submodule and a choice of meridian of the knot is a homomorphism
\begin{align*}
\phi_P \colon \pi_1(M_K) &\twoheadrightarrow \pi_1(M_K)/\pi_1(M_K)^{(2)} \cong \Z \ltimes \pi_1(M_K)^{(1)}/\pi_1(M_K)^{(2)}  \cong \Z \ltimes H_1(M_K;\Z[t^{\pm 1}]) \\
&\twoheadrightarrow \Z \ltimes H_1(M_K;\Z[t^{\pm 1}])/P \cong G
\end{align*}
which is obtained via canonical projections and the identification $\pi_1(M_K)^{(1)}/\pi_1(M_K)^{(2)} \cong H_1(M_K;\Z[t^{\pm 1}])$.
We can now state the complete algebraic characterisation of $G$-homotopy ribbon discs.  Details are given in Section~\ref{section:characterising}.

\begin{theorem}\label{thm:characterisation-Intro}
Set $G:=\Z \ltimes \Z[\tmfrac{1}{2}]$.
Let $K$ be an oriented knot, and let~$\mathcal{L}$ be the set of submodules $P \subseteq H_1(M_K;\LZ)$ of the Alexander module that are isomorphic to one of~$\Z[t^{\pm 1}]/(t-2)$ or~$\Z[t^{\pm 1}]/(2t-1)$ and fit into a short exact sequence
 \begin{equation}
 \label{eq:AwesomeAssumption0Intro}
0 \to P \to H_1(M_K;\Z[t^{\pm 1}]) \to \overline{P} \to 0.
\end{equation}
Mapping a $G$-homotopy ribbon disc to its induced lagrangian gives rise to a bijection between
\begin{itemize}
\item $G$-homotopy ribbon discs for $K$, up to topological ambient isotopy rel.\ boundary;
\item submodules $P \in \mathcal{L}$ such that, with respect to $\phi_P$,
\begin{equation*}
\tag{Ext} \label{eq:ExtConditionIntro}
\operatorname{Ext}_{\Z[G]}^1(H_1(M_K;\Z[G]),\Z[G])=0.
\end{equation*}
\end{itemize}
Moreover, these sets have cardinality at most two.
\end{theorem}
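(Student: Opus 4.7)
The plan is to check that the assignment $D \mapsto P_D$ is well-defined, lands in the prescribed subset of $\mathcal{L}$, and then to verify injectivity and surjectivity, with the cardinality bound as a final algebraic observation.

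For well-definedness, let $D$ be a $G$-homotopy ribbon disc. By the asphericity Proposition stated above, $N_D$ is a $K(G,1)$. This identifies $H_1(N_D;\Z[t^{\pm 1}])$ with $G'/[G',G']=\Z[\tfrac{1}{2}]$ equipped with the $t$-action coming from conjugation by a meridian; as a $\Z[t^{\pm 1}]$-module this is one of $\Z[t^{\pm 1}]/(t-2)$ or $\Z[t^{\pm 1}]/(2t-1)$, according to the orientation of the chosen meridian. The inclusion-induced surjection $H_1(M_K;\Z[t^{\pm 1}]) \twoheadrightarrow H_1(N_D;\Z[t^{\pm 1}])$ has kernel $P_D$, so the quotient is of one of the two prescribed types, and the lagrangian property with respect to the Blanchfield pairing gives $P_D \cong \overline{H_1(M_K;\Z[t^{\pm 1}])/P_D}$, placing $P_D$ in $\mathcal{L}$. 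The induced homomorphism $\pi_1(M_K) \to \pi_1(N_D) \cong G$ coincides with $\phi_{P_D}$ by chasing the canonical derived-series identifications used in the definition of $\phi_P$. Finally, condition~\eqref{eq:ExtConditionIntro} for $\phi_{P_D}$ follows from Poincaré--Lefschetz duality on the pair $(N_D,M_K)$ with $\Z[G]$-coefficients, using that asphericity forces $H_*(N_D;\Z[G])$ to vanish in positive degrees.

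Injectivity is immediate: equal induced lagrangians imply compatibility via Proposition~\ref{prop:ReformulateCompatibility}, and Theorem~\ref{thm:Intro} then yields an ambient isotopy rel.\ boundary. For surjectivity, given $P \in \mathcal{L}$ satisfying~\eqref{eq:ExtConditionIntro} with respect to $\phi_P$, Theorem~\ref{thm:existence-conditions}(2) applied to the surjection $\phi_P$ produces a $G$-homotopy ribbon disc $D$; tracing through the Friedl--Teichner construction, the inclusion-induced map $\pi_1(M_K)\to\pi_1(N_D) \cong G$ is identified with $\phi_P$, so passing to the second derived quotient recovers $P_D = P$.

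For the cardinality bound $|\mathcal{L}|\leq 2$, any $P \in \mathcal{L}$ has one of two possible isomorphism types, and within each type the submodule of $H_1(M_K;\Z[t^{\pm 1}])$ should be pinned down by the exactness constraint~\eqref{eq:AwesomeAssumption0Intro} together with the interaction with the Blanchfield pairing. The step I expect to be most delicate is surjectivity: ensuring that the disc produced by Theorem~\ref{thm:existence-conditions}(2) really induces the prescribed $\phi_P$ (so that $P_D=P$), rather than merely some surjection $\pi_1(M_K)\to G$ that might a priori produce the other element of $\mathcal{L}$. This requires a careful inspection of the Friedl--Teichner construction to confirm that the input homomorphism is transparently preserved on fundamental groups.
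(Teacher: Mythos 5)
Your skeleton coincides with the paper's proof (well-definedness of $D\mapsto P_D$, injectivity via Proposition~\ref{prop:ReformulateCompatibility} and Theorem~\ref{thm:Intro}, surjectivity via Friedl--Teichner, then a counting argument), but two steps are genuinely missing. The most serious is your claim that the condition (Ext) for $\phi_{P_D}$ ``follows from Poincar\'e--Lefschetz duality\dots using that asphericity forces $H_*(N_D;\Z[G])$ to vanish in positive degrees.'' Duality plus asphericity only give the identification $H_1(M_K;\Z[G])\cong H_2(N_D,M_K;\Z[G])\cong H^2(N_D;\Z[G])\cong H^2(G;\Z[G])$; this pins down the module (which is nonzero, since $G$ is a two-dimensional duality group), but it says nothing formal about the vanishing of $\Ext^1_{\Z[G]}(-,\Z[G])$ of it, and over the noncommutative ring $\Z[G]$ there is no shortcut. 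The paper's Proposition~\ref{prop:necessary-condition}(2) instead invokes Friedl--Teichner's Lemma~5.1, which translates (Ext) into the vanishing of the $\Z[G]$-coefficient Blanchfield pairing $\Bl_G^K$, constructs a relative Blanchfield-type pairing on $(N_D,\partial N_D)$ following Cochran--Orr--Teichner (Ore localisation and a Bockstein argument), and then deduces $\Bl_G^K=0$ from a diagram chase using $H_1(N_D;\Z[G])=0$. Without this (or an explicit computation of $\Ext^1_{\Z[G]}(H^2(G;\Z[G]),\Z[G])$), your map is not shown to land in the second set, so well-definedness fails.

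The cardinality bound is also asserted rather than proved: ``should be pinned down by the exactness constraint together with the interaction with the Blanchfield pairing'' is exactly the statement to be established. The paper does this in two concrete steps: Lemma~\ref{lem:ExtensionComputation} computes $\Ext^1_{\Z[t^{\pm 1}]}(\overline{P},P)\cong\Z_3$, so the Alexander module must be either $\Z[t^{\pm 1}]/(t-2)(2t-1)$ or $\Z[t^{\pm 1}]/(t-2)\oplus\Z[t^{\pm 1}]/(2t-1)$, and Lemma~\ref{lem:TwoSubmodules} shows that in each case exactly one submodule of each isomorphism type fits the short exact sequence, whence $|\mathcal{L}|\le 2$. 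Two smaller points: your placement of $P_D$ in $\mathcal{L}$ leans on the integral lagrangian property $P_D=P_D^\perp$, which itself requires the duality/UCSS computation that the paper carries out directly in Proposition~\ref{prop:necessary-condition}(1) (for merely slice discs the property only holds over $\Q[t^{\pm 1}]$, so it cannot be quoted for free); and the surjectivity worry you flag is resolved because Friedl--Teichner's Theorem~1.3 produces a disc whose exterior realises the prescribed epimorphism $\phi_P$ on fundamental groups, so $P_D=P$ follows by passing to derived quotients --- the paper relies on exactly this stronger form of Theorem~\ref{thm:existence-conditions}(2), and your proof should say so rather than leave it as an open inspection.
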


Note that Theorem~\ref{thm:characterisation-Intro} yields necessary and sufficient conditions for a knot to be $G$-homotopy ribbon.
This strengthens~\cite[Theorem 1.3]{FriedlTeichner}, which was stated in Theorem~\ref{thm:existence-conditions}.
\begin{corollary}
Set $G:=\Z \ltimes \Z[\tmfrac{1}{2}]$.
An oriented knot $K$ is $G$-homotopy ribbon if and only if its Alexander module contains a submodule $P$ that satisfies the following conditions:
\begin{enumerate}
\item $P\in \mathcal{L}$, that is $P$ is isomorphic to~$\Z[t^{\pm 1}]/(t-2)$ or~$\Z[t^{\pm 1}]/(2t-1)$, and we have $H_1(M_K;\Z[t^{\pm 1}])/P=~\overline{P}$,
\item $\operatorname{Ext}_{\Z[G]}^1(H_1(M_K;\Z[G]),\Z[G])=0$ with respect to $\phi_P$.
\end{enumerate}
\end{corollary}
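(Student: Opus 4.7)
The corollary is an immediate consequence of Theorem~\ref{thm:characterisation-Intro}. The plan is simply to unpack what ``$K$ is $G$-homotopy ribbon'' means and apply the stated bijection. By definition, $K$ is $G$-homotopy ribbon precisely when the set of $G$-homotopy ribbon discs for $K$, considered up to ambient isotopy rel.\ boundary, is nonempty. Theorem~\ref{thm:characterisation-Intro} puts this set in bijection with the set of submodules $P \in \mathcal{L}$ for which the Ext condition holds with respect to $\phi_P$, so the two sets are simultaneously empty or nonempty, and the Corollary follows.

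More explicitly, for the forward direction ($\Rightarrow$), given any $G$-homotopy ribbon disc $D$ for $K$ I would take $P := P_D$, the lagrangian induced by $D$; the content of Theorem~\ref{thm:characterisation-Intro} is that $P_D$ lies in $\mathcal{L}$ and satisfies the required Ext vanishing with respect to $\phi_{P_D}$.

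For the converse ($\Leftarrow$), given a submodule $P$ satisfying (1) and (2), one can either invoke surjectivity of the bijection in Theorem~\ref{thm:characterisation-Intro} to produce a $G$-homotopy ribbon disc $D$ with $P_D = P$, or else bypass the bijection entirely. For the latter, one observes that condition (1) makes $\phi_P \colon \pi_1(M_K) \twoheadrightarrow G$ a well-defined surjection: the identification $H_1(M_K;\Z[t^{\pm 1}])/P \cong \overline{P} \cong \Z[\tmfrac{1}{2}]$ ensures that the target $\Z \ltimes H_1(M_K;\Z[t^{\pm 1}])/P$ is isomorphic to $G$, so that the composition in the definition of $\phi_P$ lands in $G$ as claimed. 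Condition (2) is then precisely the Ext hypothesis of Theorem~\ref{thm:existence-conditions}(2), applied to the surjection $\phi_P$. That theorem directly produces a $G$-homotopy ribbon disc for $K$, so $K$ is $G$-homotopy ribbon.

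Since all the substantive work sits in Theorems~\ref{thm:characterisation-Intro} and~\ref{thm:existence-conditions}, there is no real obstacle to overcome here; the Corollary is a near-tautological reformulation of the characterisation as an existence criterion, and the only care required is to verify that condition~(1) suffices to make $\phi_P$ a surjection onto $G$ so that condition~(2) is meaningful.
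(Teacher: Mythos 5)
Your proposal is correct and matches the paper's (implicit) argument: the corollary is read off from Theorem~\ref{thm:characterisation-Intro}, whose bijection makes the two sets simultaneously nonempty, with the forward direction coming from the induced lagrangian $P_D$ and the converse from the Friedl--Teichner existence result applied to $\phi_P$ — exactly the ingredients the paper uses in proving surjectivity of the bijection. No gaps; your added check that condition (1) makes $\phi_P$ a surjection onto a group isomorphic to $G$ is the same observation the paper relies on.
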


The next remark notes that the situation in Theorem~\ref{thm:characterisation-Intro} can be made even more explicit.
\begin{remark}
\label{rem:Explicit}
As noted in Lemma~\ref{lem:ExtensionComputation}, the fact that $H_1(M_K;\LZ)$ fits into the short exact sequence~\eqref{eq:AwesomeAssumption0Intro} for some $P \in \mathcal{L}$ implies that $H_1(M_K;\LZ)$ must be isomorphic to one of
 \begin{equation*}
  M_1=\Z[t^{\pm 1}]/(t-2)(2t-1) \text{ or } M_2=\Z[t^{\pm 1}]/(t-2) \oplus \Z[t^{\pm 1}]/(2t-1).
  \end{equation*}
This strengthens the observation, due to Friedl and Teichner, that if a knot $K$ bounds a $G$-homotopy ribbon disc $D$, then~$\Delta_K \doteq (t-2)(2t-1)$~\cite[Corollary 3.4]{FriedlTeichner}.

For these $\LZ$-modules, Lemma~\ref{lem:TwoSubmodules} describes the set $\mathcal{L}$ of Theorem~\ref{thm:characterisation-Intro} explicitly:
\begin{itemize}
\item for $M_1$, we have $\mathcal{L}=\lbrace (t-2)M_1,(2t-1)M_1 \rbrace$;
\item for $M_2$, we have $\mathcal{L}=\lbrace  \Z[t^{\pm 1}]/(t-2) \oplus \lbrace 0 \rbrace, \lbrace 0 \rbrace \oplus \Z[t^{\pm 1}]/(2t-1) \rbrace$.
\end{itemize}
Finally, note that Theorem~\ref{thm:characterisation-Intro} ensures that if $K$ is $G$-homotopy ribbon, then both $P$ and $\overline{P}$ are lagrangians of the Blanchfield pairing $\Bl(K)$.
\end{remark}

\subsection{Examples}

After providing the proofs for these results, we shall describe an explicit application of Theorem~\ref{thm:Intro}: we study  the~$(\Z \ltimes \Z[\tmfrac{1}{2}])$-homotopy ribbon discs for the family~$\lbrace K_{n} \rbrace_{n \in \Z}$ of knots depicted in Figure~\ref{fig:KnIntro}. We recall the construction of explicit~$(\Z \ltimes \Z[\tmfrac{1}{2}])$-homotopy discs for each~$K_{n}$.
Then for $n=3k$, we obtain the following complete classification as an application of Theorem~\ref{thm:Intro}.

\begin{theorem}
\label{thm:ExampleIntro}
Set $G:=\Z \ltimes \Z[\tmfrac{1}{2}]$.
Up to ambient isotopy rel.\ boundary, the knot~$K_{3k}$ admits
\begin{enumerate}
\item precisely two distinct~$G$-homotopy ribbon discs if~$k=0,-1$;
\item a unique~$G$-homotopy ribbon disc if~$k \neq 0,-1.$
\end{enumerate}
\end{theorem}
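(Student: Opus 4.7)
My plan is to apply Theorem~\ref{thm:characterisation-Intro} combined with Remark~\ref{rem:Explicit}: the set of $G$-homotopy ribbon discs of $K_{3k}$ up to ambient isotopy rel.\ boundary is in bijection with the elements of the set $\mathcal{L}$ for which the Ext condition~\eqref{eq:ExtConditionIntro} holds. The whole argument thus reduces to computing $\mathcal{L}$ for $K_{3k}$ and checking, for each candidate lagrangian, whether the Ext group vanishes.

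The first step is to compute the Alexander module $H_1(M_{K_{3k}};\LZ)$, either from a Seifert matrix of the pattern underlying $K_n$ or from the Wirtinger presentation of the diagram in Figure~\ref{fig:KnIntro}. Since each $K_n$ comes equipped with an explicit $G$-homotopy ribbon disc, Remark~\ref{rem:Explicit} forces $\Delta_{K_{3k}}\doteq(t-2)(2t-1)$ and the Alexander module to be isomorphic to one of the two modules $M_1$ or $M_2$ appearing in that remark. Lemma~\ref{lem:TwoSubmodules} then describes the associated set $\mathcal{L}$ as a two-element set, which I will denote $\{P_+,P_-\}$; the two elements are interchanged by the $t\leftrightarrow t^{-1}$ symmetry of the Blanchfield pairing.

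The second step is to observe that the explicit $G$-homotopy ribbon disc constructed earlier in the section realises a specific lagrangian, say $P_+$, which therefore satisfies~\eqref{eq:ExtConditionIntro} for every integer $k$. Hence Theorem~\ref{thm:characterisation-Intro} already yields one disc for every $k$, and the question reduces to whether the other lagrangian $P_-$ also satisfies~\eqref{eq:ExtConditionIntro}.

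The third step, which is the main computation, is to evaluate $\operatorname{Ext}^1_{\Z[G]}\bigl(H_1(M_{K_{3k}};\Z[G]),\Z[G]\bigr)$ with respect to the homomorphism $\phi_{P_-}$ from Section~\ref{subsection:characterisation-intro}. Feeding $\phi_{P_-}$ into a chain complex of the universal cover of $M_{K_{3k}}$ produces an explicit presentation matrix over $\Z[G]$ whose entries depend on $k$ through the clasp count of the diagram, and vanishing of the Ext group is equivalent to an invertibility condition for this matrix after an appropriate change of rings over $\Z[G]$. The main obstacle is to show that this invertibility holds precisely for $k=0$ and $k=-1$: the natural expectation is that the relevant element of $\Z[G]$ factors as a product in which one factor is a unit exactly for those two values of $k$, the remaining factors being units for all $k$. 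Once this computation is carried out, Theorem~\ref{thm:characterisation-Intro} immediately yields two discs when $k\in\{0,-1\}$ and a unique disc otherwise, completing the proof.
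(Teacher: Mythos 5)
Your overall framing via Theorem~\ref{thm:characterisation-Intro} is reasonable, but the proof has a genuine gap: the entire content of the theorem --- that the second lagrangian supports a $G$-homotopy ribbon disc precisely when $k=0,-1$ --- is delegated to your ``third step'', and that step is never carried out. You assert that vanishing of $\operatorname{Ext}^1_{\Z[G]}(H_1(M_{K_{3k}};\Z[G]),\Z[G])$ with respect to $\phi_{P_-}$ reduces to invertibility of an explicit matrix over $\Z[G]$, and that ``the natural expectation'' is a factorisation with one factor a unit exactly for $k\in\{0,-1\}$; this is a restatement of the desired conclusion, not an argument. There is no reason offered that such a factorisation exists, and the Ext condition is known to be hard to check directly: by Friedl--Teichner it is equivalent to the vanishing of a Blanchfield-type pairing with $\Z[G]$-coefficients valued in $Q(G)/\Z[G]$, and deciding this requires work in the Ore localisation of the noncommutative ring $\Z[G]$, not a routine unit computation. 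In short, you have a plan whose decisive computation is missing, and it is exactly the computation the paper deliberately avoids because it is impractical.

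For comparison, the paper proceeds differently at precisely this point. For $k=0,-1$ it does not verify the Ext condition for the second lagrangian at all: it constructs a second explicit ribbon disc (a saddle move on the other band, after an isotopy of the Seifert surface when $k=-1$), checks via a Kirby diagram that its exterior has fundamental group $G$, identifies the induced lagrangian, and then invokes Theorem~\ref{thm:Intro} for uniqueness within each lagrangian class. For $k\neq 0,-1$ it rules out the second lagrangian not by an Ext computation but by the Cochran--Harvey--Leidy obstruction: the lagrangian $P_2$ is represented by a metabolizer whose derivative curve is a negative braid closure $J_k$, so $\rho^1(K,\phi_{P_2})=\rho^0(J_k)>0$, whereas any slice disc inducing $P_2$ would force $\rho^1(K,\phi_{P_2})=0$. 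If you want to salvage your approach you would either need to actually perform the $\Z[G]$-coefficient computation (and explain how to handle the Ore localisation), or replace your third step by a concrete obstruction of this signature-theoretic kind; also note that your second step still requires identifying which lagrangian the explicit disc induces, which in the paper is done by tracing the curves $\alpha,\beta$ through the Kirby diagram.
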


\begin{figure}[!htb]
\centering
\labellist
    \small
    \pinlabel {$n$} at 92 373
    \pinlabel {$1$} at 683 283
    \endlabellist
\includegraphics[scale=0.24]{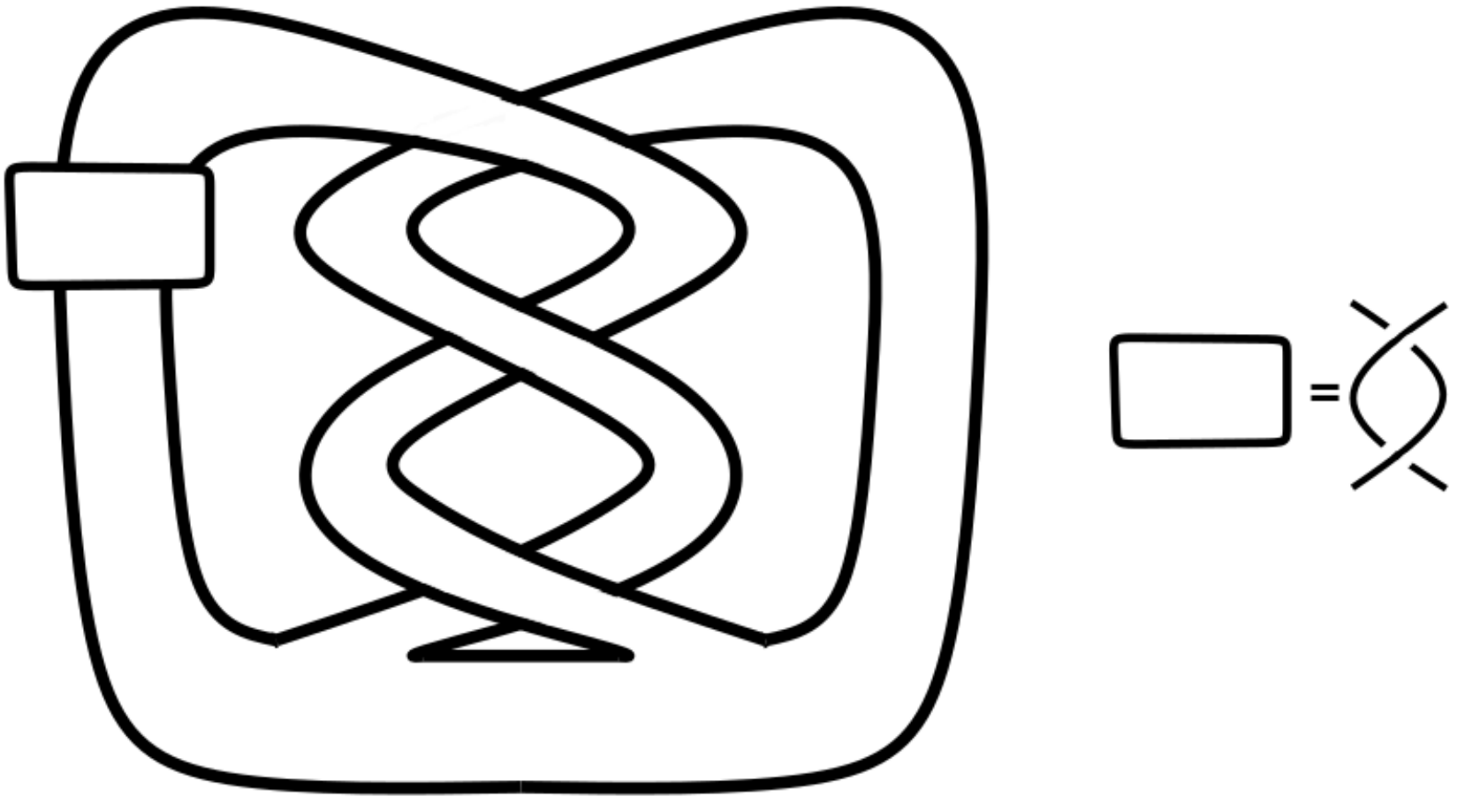}
\caption{The knot~$K_{n}$, where for $n>0$ the box symbolises $n$ positive full twists, as depicted on the right. For $n<0$, we use $|n|$ negative full twists.}
\label{fig:KnIntro}
\end{figure}

Since the Ext condition is difficult to verify in practice, the proof of the second item
uses  a theorem of Cochran-Harvey-Leidy~\cite{CochranHarveyLeidyDerivative} to obstruct the existence of a potential slice disc corresponding to one of the lagrangians of the Blanchfield pairing. This involves obtaining bounds on the Levine-Tristram signatures of metabolizing curves on a Seifert surface for $K_n$, as we shall explain in Section~\ref{sec:Examples}.
For $n \equiv 1,2 \mod{3}$, we have the following partial answer.  Part~(\ref{item:part-2-prop-other-n}) was obtained using a computer to calculate Levine-Tristram signatures.

\begin{proposition}\label{prop:other-n-intro}
Set $G:=\Z \ltimes \Z[\tmfrac{1}{2}]$. Up to ambient isotopy rel.\ boundary,
  \begin{enumerate}
    \item the knots $K_{-1}$ and $K_{-2}$ admit precisely two distinct~$G$-homotopy ribbon discs;
     \item\label{item:part-2-prop-other-n} the knots $K_{-5}$, $K_{-4}$, $K_1$, and $K_2$ admit a unique~$G$-homotopy ribbon disc.
  \end{enumerate}
\end{proposition}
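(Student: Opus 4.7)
The plan is to follow the strategy established for Theorem~\ref{thm:ExampleIntro}, applying Theorem~\ref{thm:characterisation-Intro} to translate the geometric classification problem into an algebraic one about lagrangians of the Blanchfield pairing. For each knot $K_n$ listed in the proposition, a direct computation of the Alexander module (using the Seifert matrix of the standard genus-one Seifert surface visible from Figure~\ref{fig:KnIntro}) shows that $\Delta_{K_n} \doteq (t-2)(2t-1)$ and that $H_1(M_{K_n};\LZ)$ is one of the two modules $M_1$, $M_2$ described in Remark~\ref{rem:Explicit}. By that same remark, the set $\mathcal{L}$ consists of at most two lagrangians $P_+ \cong \Z[t^{\pm 1}]/(t-2)$ and $P_- \cong \Z[t^{\pm 1}]/(2t-1)$, and the classification therefore reduces to deciding, for each $K_n$, which of $P_+$ and $P_-$ satisfies the Ext condition~\eqref{eq:ExtConditionIntro} with respect to $\phi_{P_\pm}$.

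First, for every $K_n$ in the statement, I would recall (from the construction reviewed earlier in Section~\ref{sec:Examples}) an explicit $G$-homotopy ribbon disc $D_n$ and read off its induced lagrangian $P_{D_n} \in \mathcal{L}$ via the map $H_1(M_{K_n};\LZ) \to H_1(N_{D_n};\LZ)$. This establishes existence of at least one disc, equivalently the Ext condition for one of the two lagrangians, for every $n$ under consideration.

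For part~(1), dealing with $K_{-1}$ and $K_{-2}$, I would produce a second explicit $G$-homotopy ribbon disc inducing the conjugate lagrangian. The natural candidate is obtained by exploiting the symmetry $P \leftrightarrow \overline{P}$: applying an orientation-reversing or mirroring operation to the band data used to build $D_n$ should yield a disc $D_n'$ whose induced lagrangian is the complementary element of $\mathcal{L}$. Once both lagrangians are realised, Theorem~\ref{thm:Intro} ensures $D_n$ and $D_n'$ are non-isotopic rel.\ boundary, while Theorem~\ref{thm:characterisation-Intro} caps the total count at two.

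For part~(2), the task is to rule out the conjugate lagrangian for each of $K_{-5}, K_{-4}, K_1, K_2$. This is the main obstacle, and it is where the analytic work lives. The plan is to apply the Cochran-Harvey-Leidy derivative obstruction \cite{CochranHarveyLeidyDerivative} exactly as in the proof of Theorem~\ref{thm:ExampleIntro}: (i) fix a genus-one Seifert surface $\Sigma$ for $K_n$; (ii) identify the simple closed curve $\eta \subset \Sigma$ whose homology class generates the metabolizer of the Seifert form corresponding to the putative lagrangian; (iii) bound, or compute, the integral of the Levine-Tristram signature function $\sigma_{K_n}(\omega)$ of $\eta$ (viewed as a knot in $S^3$) over the unit circle; and (iv) verify that this integral fails the Cochran-Harvey-Leidy inequality, thereby obstructing the existence of a $G$-homotopy ribbon disc inducing the bad lagrangian. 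The hard part is step~(iii): the signature function is piecewise constant with jumps at roots of $\Delta_\eta$, so the calculation is finite, but the curves $\eta$ that arise for $n = -5, -4, 1, 2$ are sufficiently complicated that the authors resorted to computer calculation, and I would do the same, tabulating $\sigma_{K_n}(\omega)$ at sample points between Alexander roots and summing. Combining the obstruction with the explicit disc of the first step then gives exactly one equivalence class of $G$-homotopy ribbon discs for each of these four knots.
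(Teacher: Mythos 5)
Your part~(2) is essentially the paper's argument: identify the derivative curve on the genus-one Seifert surface representing the metabolizer $3a-nb$ for the second lagrangian, compute (by computer) that the integral $\rho^0$ of its Levine--Tristram signature function is nonzero, and invoke \cite[Theorem~4.2]{CochranHarveyLeidyDerivative} to exclude any slice disc inducing that lagrangian; combined with the disc coming from the unknotted derivative representing $b$ and Theorem~\ref{thm:Intro}, this gives uniqueness. (Routing the conclusion through Theorem~\ref{thm:characterisation-Intro} rather than directly through Theorem~\ref{thm:Intro} is fine, since a failure of disc existence forces failure of the Ext condition via Friedl--Teichner.)

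The gap is in part~(1). You propose to realise the second lagrangian for $K_{-1}$ and $K_{-2}$ by ``applying an orientation-reversing or mirroring operation to the band data'' of the first disc, appealing to the symmetry $P \leftrightarrow \overline{P}$. No such operation exists in general: mirroring produces a disc for the mirror knot, not for $K_n$ rel.\ the same boundary, and the algebraic symmetry of $\mathcal{L}$ under $P \mapsto \overline{P}$ has no geometric counterpart acting on discs for a fixed $K_n$ --- indeed for $n=1,2,-4,-5$ only one lagrangian is realised at all, so any purported symmetric construction would contradict your own part~(2). What actually makes $n=-1,-2$ special is that the derivative curve for the second lagrangian, representing $3a+b$ resp.\ $3a+2b$ in $H_1(F;\Z)$, is \emph{unknotted} in $S^3$; one then isotopes the Seifert surface so that this curve becomes the core of a band (as in the $k=-1$ case of Theorem~\ref{thm:Kn}, Figure~\ref{fig:K-3}) and performs a saddle move to obtain a second ribbon disc, checking via the Kirby-diagram computation of Section~\ref{sec:Examples} that its exterior has group $G$ and that it induces the lagrangian $P_2$. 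Without this concrete realisation (and the verification that the resulting disc is $G$-homotopy ribbon, not merely ribbon), the claim that $K_{-1}$ and $K_{-2}$ admit \emph{two} discs is unproved; the rest of your count (distinctness of the two discs and the cap of two) is fine by Theorem~\ref{thm:Intro} and the classification of lagrangians.
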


 As $|n|$ increases, so does the complexity of the metabolizing curves for $K_n$.
 We therefore conjecture that $K_n$ admits precisely two $G$-homotopy ribbon discs for $-3 \leq n \leq 0$, and a unique $G$-homotopy ribbon disc otherwise. Note that altogether we have verified the conjecture for $-6 \leq n \leq 3$, and for $n=3k$, $k \in \Z$.
Due to limitations in our ability to obtain bounds for Levine-Tristram signatures of metabolizing curves in infinite families, we only have the experimental evidence given in Proposition~\ref{prop:other-n-intro}.

\subsection*{Organisation}

This article is organised as follows.
Theorem~\ref{thm:Goal} (and thus Theorem~\ref{thm:IntroZTheorem}) is proved using surgery theory in Section~\ref{sec:Thm12}, while we deduce Theorem~\ref{thm:Intro} from considerations on the Alexander module in Section~\ref{sec:Thm11}.
Theorem~\ref{thm:characterisation-Intro} is proved in Section~\ref{section:characterising},
Theorem~\ref{thm:ExampleIntro} and Proposition~\ref{prop:other-n-intro} are proved in Section~\ref{sec:Examples}.
Finally, in Section~\ref{sec:relaxing} we relax the rel.\ boundary condition on ambient isotopies, but still exhibit knots with precisely two $G$-homotopy ribbon discs.

\subsection*{Acknowledgments.}
AC thanks Durham University for its hospitality and was supported by an early Postdoc.Mobility fellowship funded by the Swiss FNS. He also thanks the mathematical research institute MATRIX in Australia where part of this research was performed. 
MP was partially supported by EPSRC New Investigator grant EP/T028335/1 and EPSRC New Horizons grant EP/V04821X/1.

Both authors thank Peter Feller, Fabian Hebestreit, Min Hoon Kim, Markus Land, Paolo Lisca, Allison N.\ Miller, Matthias Nagel, and Peter Teichner for helpful discussions and suggestions.
In particular Teichner's suggestions for Section~\ref{section:characterising} improved the statements of the results therein, and Section~\ref{sub:DegreeOne} benefited from discussions with Hebestreit and Land.
Part of our motivation to work on this problem came from an article of Juh\'{a}sz-Zemke~\cite{JuhaszZemkeDistinguishing}.

\subsection*{Conventions.}
Throughout this article, we work in the topological category and we assume that all manifolds are compact and oriented.
We say that homeomorphisms, homotopy equivalences and isotopies are \emph{rel.\ boundary} if they fix the boundary pointwise.
If~$N_1,N_2$ are two~$n$-manifolds with boundary~$M$, a cobordism between~$N_1$ and~$N_2$ is \emph{relative~$M$} if, when restricted to~$M$, it is the product~$M \times [0,1]$.
Given a Poincar\'e  complex~$(X,\partial X)$, a degree one normal map~$(f,\partial f) \colon (N,\partial N) \to (X,\partial X)$ is \emph{relative} if~$\partial f$ is a homotopy equivalence.

\section{The surgery programme for slice disc exteriors.}
\label{sec:Thm12}

In this section, we prove Theorem~\ref{thm:Goal} by following the surgery programme described above.
From now on, $\Gamma$ denotes either $\Z$ or $\Z \ltimes \Z[\tmfrac{1}{2}]$.
Recall that two $\Gamma$-homotopy ribbon discs~$D_1$ and~$D_2$ for a knot~$K$ are called \emph{compatible} if there is an isomorphism~$f \colon \pi_1(N_{D_1}) \stackrel{\cong}{\to} \pi_1(N_{D_2})$ that satisfies~$f~\circ~\iota_{D_1}=\iota_{D_2}$, where~$\iota_{D_k} \colon \pi_1(M_K) \to \pi_1(N_{D_k})$ denotes the inclusion induced map and $\partial N_{D_k} = M_K$ for~$k=1,2$.
Such an isomorphism~$f$ will be called a \emph{compatible isomorphism}.

\subsection{The homotopy type}
\label{sub:HomotopyEquivalent}

Let $D_1$ and $D_2$ be two $\Gamma$-homotopy ribbon discs for a knot $K$.
The first step in the surgery programme consists of showing that~$N_{D_1}$ and~$N_{D_2}$ have the same homotopy type.
To achieve this, we describe the homotopy type of arbitrary~$\Gamma$-homotopy ribbon disc exteriors: they are Eilenberg-Maclane spaces~$K(\Gamma,1)$.

\begin{lemma}
\label{lem:BG}
Let $\Gamma$ be either $\Z$ or $\Z \ltimes \Z[\tmfrac{1}{2}]$.
If~$D$ is a $\Gamma$-homotopy ribbon disc for a knot~$K$, then its exterior~$N_D$ is a~$K(\Gamma,1)$.
In particular,
\begin{enumerate}
\item all~$\Gamma$-homotopy ribbon disc exteriors are homotopy equivalent to one another;
\item two~$\Gamma$-homotopy ribbon discs are compatible if and only if they are homotopy equivalent rel.\ boundary.
\end{enumerate}
\end{lemma}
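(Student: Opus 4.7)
The plan is to first establish that $N_D$ is aspherical; both numbered consequences then follow quickly from general features of $K(\Gamma,1)$'s. For asphericity I would show that the universal cover $\widetilde{N_D}$ is contractible. Since $\widetilde{N_D}$ is simply connected, by Hurewicz and Whitehead this reduces to proving $H_i(N_D;\Z[\Gamma]) = 0$ for every $i\geq 2$. Degree $4$ is free because $\widetilde{N_D}$ is a non-compact $4$-manifold ($\Gamma$ is infinite). For $H_3$ I would combine Poincar\'e--Lefschetz duality $H_3(N_D;\Z[\Gamma]) \cong H^1(N_D,M_K;\Z[\Gamma])$ with the long exact sequence of the pair: the homotopy-ribbon surjectivity of $\pi_1(M_K) \twoheadrightarrow \pi_1(N_D)$ makes $H_0(M_K;\Z[\Gamma]) \to H_0(N_D;\Z[\Gamma])$ an isomorphism, while $H_1(N_D;\Z[\Gamma]) = 0$ by simple connectivity of $\widetilde{N_D}$, so both $H_0(N_D,M_K;\Z[\Gamma])$ and $H_1(N_D,M_K;\Z[\Gamma])$ vanish; a universal coefficient spectral sequence argument then yields $H^1(N_D,M_K;\Z[\Gamma]) = 0$.

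The heart of the proof is the vanishing of $H_2(N_D;\Z[\Gamma])$. For $\Gamma=\Z$, the ring $\Z[\Z]$ is a PID, and $\Delta_K\doteq 1$ forces $H_1(M_K;\Z[\Z])=0$. A rank count via $\chi(N_D)=0$ then shows $H_2(N_D;\Z[\Z])$ is $\Z[\Z]$-torsion, so $\Hom(H_2,\Z[\Z])=0$; combined with the universal coefficient spectral sequence this gives $H^2(N_D;\Z[\Z])=0$, hence $H_2(N_D,M_K;\Z[\Z])=0$ by duality. Chasing the long exact sequence together with naturality of Poincar\'e duality identifies the connecting map $H_3(N_D,M_K;\Z[\Z]) \to H_2(M_K;\Z[\Z])$ as an isomorphism, pinning down $H_2(N_D;\Z[\Z])=0$. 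For $\Gamma=G$ the ring $\Z[G]$ is not a PID and has higher global dimension, so the analogous UCSS has a leftover $\Ext^1$ term; this is exactly controlled by the hypothesis $\Ext^1_{\Z[G]}(H_1(M_K;\Z[G]),\Z[G])=0$ furnished by Theorem~\ref{thm:existence-conditions}(2), which holds for every $G$-homotopy ribbon knot. I expect the $G$ case to be the main technical obstacle, requiring careful bookkeeping of duality and universal coefficients over $\Z[G]$.

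Once $N_D$ is known to be a $K(\Gamma,1)$, consequence~(1) is immediate from the uniqueness of $K(\Gamma,1)$'s up to homotopy equivalence. For~(2), one direction is tautological: a rel.\ boundary homotopy equivalence $N_{D_1} \to N_{D_2}$ induces a $\pi_1$-isomorphism compatible with the inclusions $\iota_{D_i}$. For the converse, given a compatible isomorphism $f$, the classifying property of $N_{D_2}=K(\Gamma,1)$ produces a map $N_{D_1} \to N_{D_2}$ realising $f$ on $\pi_1$ and extending the inclusion on $M_K$ (the compatibility condition is exactly what allows the map on $M_K$ to extend over $N_{D_1}$); by Whitehead's theorem this map is a homotopy equivalence, and rel.\ boundary by construction.
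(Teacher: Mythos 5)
Your reduction to $H_i(N_D;\Z[\Gamma])=0$ for $i\geq 2$, your treatment of degrees $3$ and $4$, and your derivation of the two listed consequences all match the paper. The genuine gap is exactly where you flag it: the vanishing of $H_2(N_D;\Z[\Gamma])$ for $\Gamma=G=\Z\ltimes\Z[\tmfrac{1}{2}]$ is not proved, and the tool you propose would not prove it. Theorem~\ref{thm:existence-conditions}(2) is a \emph{sufficiency} statement: if \emph{some} surjection $\pi_1(M_K)\twoheadrightarrow G$ satisfies the Ext condition, then a $G$-homotopy ribbon disc exists. It does not assert that the homomorphism induced by the \emph{given} disc $D$ satisfies $\Ext^1_{\Z[G]}(H_1(M_K;\Z[G]),\Z[G])=0$; that necessity is only established later in the paper (Proposition~\ref{prop:necessary-condition}(2)), and its proof itself relies on the same coefficient-system machinery you are trying to build, so invoking it here would be circular in spirit and is in any case not "furnished" by Theorem~\ref{thm:existence-conditions}. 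Moreover, even granting the Ext condition, it does not slot into your UCSS bookkeeping: the leftover $E_2$-terms for $H^2(N_D,M_K;\Z[G])$ are $\Ext^1$ and $\Ext^2$ of the \emph{relative} modules $H_1(N_D,M_K;\Z[G])$ and $H_0(N_D,M_K;\Z[G])$, which already vanish; what remains to be shown is $\Hom_{\Z[G]}(H_2(N_D,M_K;\Z[G]),\Z[G])=0$, i.e.\ a torsion statement about $H_2(N_D,M_K;\Z[G])$. The paper gets this uniformly for both groups without any Ext hypothesis: $\Gamma$ is PTFA (metabelian, $H_1=\Z$, torsion-free commutator subgroup), $N_D$ is a $\Z$-homology circle and $H_i(M_K;\Z)\cong H_i(S^1;\Z)$ for $i=0,1$, so the Cochran--Orr--Teichner chain-homotopy lifting argument \cite[Proposition~2.10]{CochranOrrTeichner} shows $H_2(N_D;\Z[\Gamma])$ and $H_1(M_K;\Z[\Gamma])$ vanish after Ore localisation, hence $H_2(N_D,M_K;\Z[\Gamma])$ is $\Z[\Gamma]$-torsion and its $\Hom$ into $\Z[\Gamma]$ is zero. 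This is the missing idea in your proposal.

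Two smaller points on your $\Z$ case. First, $\Z[\Z]=\Z[t^{\pm 1}]$ is not a PID (it has global dimension two); your argument survives because $\Ext^2_{\Z[t^{\pm1}]}(\Z,\Z[t^{\pm1}])=0$ and the relevant $\Ext^1$ terms vanish, but the PID claim as stated is false. Second, you import the fact that a $\Z$-homotopy ribbon knot has $\Delta_K\doteq 1$ (equivalently $H_1(M_K;\Z[t^{\pm1}])=0$); this is true but is an extra input requiring the standard lagrangian argument, whereas the paper's PTFA/torsion route needs no such fact and treats $\Z$ and $G$ identically. Your handling of consequences (1) and (2), including the obstruction-theoretic construction of the rel.\ boundary map using $\pi_2=\pi_3=0$ and Whitehead's theorem, agrees with the paper and is fine.
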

\begin{proof}
We must show that the higher homotopy groups of~$N_D$ vanish.
Since~$\pi_1(N_D)\cong \Gamma$, the~$\Gamma$-cover of~$N_D$ is simply connected.
Thus, by the Hurewicz theorem, we are reduced to showing that~$H_i(N_D;\Z[\Gamma])=~0$ for~$i \geq 2$.
We start with the case where~$i=3,4$.
Since~$K$ is homotopy ribbon, the map~$\pi_1(M_K) \to \pi_1(N_D)=\Gamma$ is surjective.
It follows that the corresponding~$\Gamma$-cover of $M_K$ is connected, so that we have an isomorphism $H_0(M_K;\Z[\Gamma]) \cong H_0(N_D;\Z[\Gamma])$. Therefore~$H_0(N_D,M_K;\Z[\Gamma])=0$.
Next, again since~$\pi_1(N_D)\cong \Gamma$, we have~$H_1(N_D;\Z[\Gamma])=0$, and we promptly deduce that~$H_1(N_D,M_K;\Z[\Gamma])=0$.
Poincar\'e duality and the universal coefficient spectral sequence, UCSS for short~\cite[Theorem~2.3]{Levine77}
 \[E_2^{p,q} = \overline{\Ext^q_{\Z[\Gamma]}(H_p(N_D,M_K;\Z[\Gamma]),\Z[\Gamma])} \Rightarrow H^{p+q}(N_D,M_K;\Z[\Gamma])\]
 imply that~$H_i(N_D;\Z[\Gamma]) \cong H^{4-i}(N_D,M_K;\Z[\Gamma])=0$ for~$i=3,4$. Here the overline emphasises the involuted module structure.
For~$i=2$, by duality and the UCSS (where we use that $H_i(N_D,M_K;\Z[\Gamma])=0$ for $i=0,1$), we have
$$H_2(N_D;\Z[\Gamma]) \cong H^2(N_D,M_K;\Z[\Gamma]) \cong \overline{\operatorname{Hom}_{\Z[\Gamma]}(H_2(N_D,M_K;\Z[G]),\Z[\Gamma])}.$$
It is therefore enough to show that~$H_2(N_D,M_K;\Z[\Gamma])$ is~$\Z[\Gamma]$-torsion.
Using the long exact sequence of~$(N_D,M_K)$ with $\Z[\Gamma]$ coefficients, this reduces to showing that~$H_2(N_D;\Z[\Gamma])$ and~$H_1(M_K;\Z[\Gamma])$ are both~$\Z[\Gamma]$-torsion.
The group~$\Gamma$ is PTFA since it is metabelian, has~$H_1(\Gamma)=\Z$ and torsion free commutator subgroup; we refer to~\cite[Definition~2.1 and Remark 2.3]{CochranOrrTeichner} for relevant details on PTFA groups.
 Since~$N_D$ is a~$\Z$-homology circle and since~$H_i(M_K;\Z)=H_i(S^1;\Z)$ for~$i=0,1$, these two statements follow from a now standard chain homotopy lifting argument~\cite[Proposition 2.10]{CochranOrrTeichner}.
We have therefore shown that~$N_D$ is a~$K(\Gamma,1)$.

The first consequence is immediate: for fixed~$\Gamma$ and~$n$, Eilenberg-Maclane spaces~$K(\Gamma,n)$ are unique up to homotopy equivalence.
We prove the last assertion.
 If~$f \colon N_{D_1} \to N_{D_2}~$ is a homotopy equivalence rel.\ boundary, then it certainly induces a compatible isomorphism~$\pi_1(N_{D_1}) \to \pi_1(N_{D_2})$.
Conversely, assume that~$f \colon \pi_1(N_{D_1}) \to \pi_1(N_{D_2})$ is a compatible isomorphism.
We use basic obstruction theory to construct the desired rel.\ boundary homotopy equivalence~$N_{D_1} \to N_{D_2}$.
Note that~$N_{D_i}$ is homotopy equivalent to a $3$-dimensional CW-complex with~$M_K$ as a subcomplex (an argument is provided in~\cite[Proof~of~Proposition~5.14]{ConwayNagelToffoli}).
We define a map~$N_{D_1}^{(1)} \cup M_K \to N_{D_2}$ by sending the (relative $M_K$) ~$1$-cells to their image under~$f$ and mapping~$M_K$ identically to its image in~$N_{D_2}$.
This map extends over the~$2$-cells of~$(N_{D_1},M_K)$: the attaching maps of the 2-cells are sent to the image of the relations under~$f$ and are therefore homotopically trivial.
Since we have established that the~$N_{D_i}$ are Eilenberg-Maclane spaces,~$\pi_2(N_{D_2})=0$ and~$\pi_3(N_{D_2})=0$, and we can therefore extend the aforementioned map over~$N_{D_1}$ as desired.
\end{proof}

\subsection{Finding a degree one normal map.}
\label{sub:DegreeOne}
Using Lemma~\ref{lem:BG}, we fix once and for all a rel.\ boundary homotopy equivalence~$f \colon N_{D_2} \to N_{D_1}$.
This way,~$\id_{N_{D_1}}$ and~$f$ are both degree one normal maps of the form~$(N_{D_j},\partial N_{D_j}=M_K) \to (N_{D_1},M_K)$,
and we wish to find a relative degree one normal cobordism~$W \to N_{D_1} \times [0,1]$ between them; we refer the reader to~\cite{WallSurgeryOnCompact} for the relevant terminology from surgery theory.
 In other words, we must show that~$f$ and~$\id_{N_{D_1}}$ define the same element in the set~$\mathcal{N}_{TOP}(N_{D_1},M_K)$ of relative normal bordism classes of degree one normal maps~$(M^4,\partial M^4) \to (N_{D_1},M_K)$.
To achieve this, we recall some facts from surgery theory that will be familiar to the experts.
\medbreak

Set $G:=\colim G(n)$ and $TOP:=\colim TOP(n)$,
where $G(n)$ and $TOP(n)$ denote respectively the monoid of homotopy self-equivalences of $S^{n-1}$ and the group of homeomorphisms of~$\R^n$ which map~$0$ to itself, both endowed with the compact-open topology.
We refer to~\cite{MadsenMilgram} for further details on $G$, $TOP$, and on the homotopy fibre $G/TOP$ of the map of classifying spaces $BTOP \to BG$.
Given a basepoint $*$ of~$G/TOP$ and a compact oriented topological~$4$-manifold $X$,
there are bijections
\begin{equation}
\label{eq:GTOP}
\mathcal{N}_{TOP}(X,\partial X) \simeq [(X,\partial X),(G/TOP,*)] \simeq H^4(X,\partial X;\Z) \oplus H^2(X,\partial X;\Z_2).
\end{equation}
Here, since~$X$ is a manifold,~$\mathcal{N}_{TOP}(X,\partial X)$ is based by~$\id_X$ and this leads to the first bijection in~\eqref{eq:GTOP}.  That the first map is an isomorphism uses topological map transversality~\cite[III.1]{KirbySiebenmann},~\cite[Section~9.5]{FreedmanQuinn}.
The second bijection follows from the fact that the Postnikov 4-type of~$G/TOP$ is homotopy equivalent to~$K(\Z,4) \times K(\Z_2,2)$;
see \cite[Annex~C, Remark 15.4]{KirbySiebenmann}, \cite[p.~397]{Kirby-Taylor}.

When~$X=N_{D_1}$, a combination of Poincar\'e duality and the universal coefficient theorem give~$H^2(N_{D_1},\partial N_{D_1};\Z_2)=~0$, starting from the fact that $N_{D_1}$ is a homology circle.
We therefore focus on the~$H^4$ term: composing the bijection of~\eqref{eq:GTOP} with the projection onto the first summand gives a map
\begin{equation}
\label{eq:proj1}
\op{proj}_1 \colon \mathcal{N}_{TOP}(X,\partial X) \to H^4(X,\partial X;\Z).
\end{equation}
Since~$H_3(X,\partial X;\Z) \cong H^1(X;\Z) \cong \Hom_\Z(H_1(X;\Z),\Z)$ is torsion free, we know that the evaluation map~$H^4(X,\partial X;\Z) \to \operatorname{Hom}_\Z(H_4(X,\partial X;\Z),\Z)$ is an isomorphism.
As~$X$ is compact, an element of~$H^4(X,\partial X;\Z)$ is determined by its evaluation on the fundamental class~$[X,\partial X]$.

\begin{proposition}
\label{prop:GTOP}
Let~$X$ be a compact oriented topological~$4$-manifold.
Given a degree one normal map~$(g,\partial g) \colon (M,\partial M) \to (X,\partial X)$ with~$\partial g$ a homotopy equivalence, one has
\[ \langle \op{proj}_1(g,\partial g),[X,\partial X] \rangle =\smfrac{1}{8} (\sigma(M)-\sigma(X)).\]
\end{proposition}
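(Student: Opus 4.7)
My strategy is to exhibit both sides of the claimed equality as integer-valued homomorphisms on $\mathcal{N}_{TOP}(X,\partial X) \cong [X/\partial X, G/TOP]$, and then verify their agreement on a single universal generator.

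First, I would unwind $\op{proj}_1$ in terms of the classifying space. Via~\eqref{eq:GTOP}, the class $\op{proj}_1(g)$ is the pullback $\eta(g)^*(\ell_1) \in H^4(X,\partial X;\Z)$ of a generator $\ell_1$ of $H^4(G/TOP;\Z) \cong \Z$, where $\eta(g)$ denotes the normal invariant of $g$. Evaluating against $[X,\partial X]$ therefore defines a homomorphism $\psi_1 \colon \mathcal{N}_{TOP}(X,\partial X) \to \Z$ equal to the left-hand side of the proposition. Independently, Novikov additivity of the signature, combined with the assumption that $\partial g$ is a homotopy equivalence (so that normal bordisms rel.\ boundary glue along $\partial X \times I$ without changing signatures), shows that $g \mapsto \sigma(M) - \sigma(X)$ is a normal bordism invariant and in fact additive under the abelian group structure on normal invariants; this yields a second homomorphism $\psi_2 \colon \mathcal{N}_{TOP}(X,\partial X) \to \Z$. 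The goal becomes the equality $\psi_2 = 8\,\psi_1$.

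Second, I would reduce to a calculation on a single universal example. Since $H^4(G/TOP;\Z) = \Z$, both $\psi_1$ and $\psi_2$ factor through the $H^4$-component of the normal invariant, which in turn is detected by the generator of $\pi_4(G/TOP) = L_4(\Z) = \Z$. This generator is realised by a degree one normal map from Freedman's $E_8$ manifold to $S^4$. On this example, the normalisation of $\ell_1$ gives $\psi_1 = 1$, while directly $\psi_2 = \sigma(E_8) - \sigma(S^4) = 8$. This pins down the ratio $\psi_2/\psi_1 = 8$ on a generator, hence everywhere, yielding the claimed formula.

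The main obstacle is tracking $\ell_1$ and the additivity of $\psi_2$ carefully at the prime $2$. The Sullivan splitting of $G/TOP$ is only integrally clean away from $2$, with the $2$-primary content encoded in the nontrivial Postnikov $k$-invariant linking $K(\Z_2,2)$ and $K(\Z,4)$; since the signature is a genuinely $2$-primary invariant, one must supplement the naive splitting with a careful analysis of the interaction between the $H^2(-;\Z_2)$ and $H^4(-;\Z)$ summands appearing in~\eqref{eq:GTOP} (recall, in our application, that the former vanishes, but in general it contributes via a secondary obstruction). This integral analysis, together with the verification that the normalisation of $\ell_1$ via the $E_8$ example is consistent with the bijection~\eqref{eq:GTOP}, is exactly what is worked out in \cite[Chapter~4.A]{MadsenMilgram}, and my proof would closely follow their treatment.
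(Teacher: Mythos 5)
Your starting point and your ultimate source are the same as the paper's (identify $\mathcal{N}_{TOP}(X,\partial X)$ with $[(X,\partial X),(G/TOP,*)]$ and lean on \cite[Chapter~4.A]{MadsenMilgram} for the behaviour of $G/TOP$ at the prime $2$), but the specific reduction you propose has a gap. The step ``both $\psi_1$ and $\psi_2$ are homomorphisms that factor through the $H^4$-component, hence can be checked on the generator of $\pi_4(G/TOP)$'' assumes exactly the two statements that carry the content of the proposition: (i) that $g\mapsto\sigma(M)-\sigma(X)$ is additive for the addition on normal invariants, and (ii) that it vanishes on normal invariants concentrated in the $H^2(X,\partial X;\Z_2)$ summand. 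Novikov additivity only yields normal-bordism invariance of the signature defect; it says nothing about how the defect behaves under the H-space addition on $[(X,\partial X),(G/TOP,*)]$, nor about the $K(\Z_2,2)$-part of the normal invariant, which is precisely the $2$-primary interaction you defer to the end. Granting (i) and (ii), your argument does close: every normal invariant differs from a composite through $S^4$ of a multiple of the $\pi_4(G/TOP)\cong L_4(\Z)$ generator (realised by $E_8\to S^4$) by an element with trivial $H^4$-part, and the normalisation $\psi_2/\psi_1=8$ follows. But (i) and (ii) are not consequences of the reasons you give, and proving them is essentially equivalent to the characteristic-class formula you are trying to establish, so as written the middle of your argument is close to circular.

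The paper's proof is organised so that this issue never arises: it defines classes $h_4,h_2$ on $G/TOP$ directly from the simply connected surgery obstructions, using their bordism invariance and the section of $\Omega_*^{SO}(G/TOP,*)\otimes\Z_{(2)}\to H_*(G/TOP,*;\Z_{(2)})$, and then uses the Postnikov $4$-type $K(\Z,4)\times K(\Z_2,2)$ of $G/TOP$ to identify the bijection \eqref{eq:GTOP} with $g\mapsto(f^*(h_4),f^*(h_2))$. With that choice of splitting one has $\op{proj}_1(g)=f^*(h_4)$, and the displayed formula is the definition of $h_4$ evaluated on $f_*[X,\partial X]$; no separate $E_8$ normalisation is needed. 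To repair your version, either construct \eqref{eq:GTOP} from these surgery-obstruction classes (as the paper and \cite[Chapter~4.A]{MadsenMilgram} do), or supply a genuine argument for (i) and (ii) -- for instance via the identification of the degree-$4$ class with the L-theoretic surgery class -- rather than appealing to Novikov additivity; note that in the paper's application the summand $H^2(N_{D_1},M_K;\Z_2)$ happens to vanish, but the proposition is asserted for arbitrary compact oriented $X$, so (ii) cannot be dismissed.
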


This result is known to surgery theorists.
We give a proof using \cite[Chapter~4]{MadsenMilgram}, but also refer to \cite[pp.~202-3]{FreedmanQuinn} for a related discussion.

\begin{proof}
As mentioned above, by~\cite[Annex C, Remark 15.4]{KirbySiebenmann} the map of $G/TOP$ to its fourth Postnikov section yields a $5$-equivalence
\[\Theta \colon G/TOP \to K(\Z,4) \times K(\Z_2,2). \]
Letting $k_4 \in H^4(K(\Z,4);\Z) \cong \Z$ and $k_2 \in H^2(K(\Z_2,2);\Z_2) \cong \Z_2$ be generators, this gives rise to cohomology classes
\begin{align*}
  h_4 &:= [(\op{pr}_1 \circ \Theta)^*(k_4)] \in H^4(G/TOP;\Z), \\
  h_2 &:= [(\op{pr}_2 \circ \Theta)^*(k_2)] \in H^2(G/TOP;\Z_2)
\end{align*}
where $\op{pr}_i$ is projection onto the $i$th factor.
The degree one normal map $(g,\partial g)$ determines $\wh{g} \in [(X,\partial X),(G/TOP,*)]$ by \eqref{eq:GTOP}. Then by definition of $\operatorname{proj}_1$, we have
\[\langle \op{proj}_1(g,\partial g),[X,\partial X] \rangle = \langle \wh{g}^*(h_4),[X,\partial X] \rangle.\]
Next, by \cite[Remark~4.36~and~p.~76]{MadsenMilgram}, we have:
\begin{equation}\label{eqn:MM-surgery-obstruction-map}
  \langle \wh{g}^*(h_4),[X,\partial X] \rangle = \smfrac{1}{8} (\sigma(M)-\sigma(X)).
\end{equation}
Madsen-Milgram give this formula for the class $\widetilde{K}_4 = h_4 \otimes 1 \in H^4(G/TOP;\Z) \otimes_{\Z} \Z_{(2)}$ $\cong H^4(G/TOP;\Z_{(2)})$, instead of our $h_4$, where
$\Z_{(2)}$ denotes the ring of integers localised at~$2$.  This is because they are describing the entire homotopy type of $G/TOP$. To describe the homotopy type succinctly,
as in Sullivan's study of $G/PL$~\cite[p.~126~onwards]{SullivanPhD}, one describes the homotopy type localised at~$2$, $G/TOP[2]$, and the homotopy type with $2$ inverted, and then combines them.  But as we are only interested in the $4$-type, the map~$\Theta$ describes the homotopy type without localising.   Note that the formula~\eqref{eqn:MM-surgery-obstruction-map} is the same whether we use~$h_4$ or $h_4 \otimes 1$, since $\Z \subseteq \Z_{(2)}$.
This concludes the proof of Proposition~\ref{prop:GTOP}.
\end{proof}

Using Proposition~\ref{prop:GTOP}, we can establish the existence of the desired normal bordism.

\begin{proposition}
\label{prop:ReducedNormal}
Let $\Gamma$ be either $\Z$ or $\Z \ltimes \Z[\tmfrac{1}{2}]$.
Let $D_1$ and $D_2$ be two $\Gamma$-homotopy ribbon discs for a knot $K$ and let $f \colon N_{D_1} \to N_{D_2}$ be a rel.\ boundary homotopy equivalence.
There exists a rel.\ $M_K$ cobordism~$(W,N_{D_1},N_{D_2})$ and a relative degree one normal map
$$(F,\id_{N_{D_1}},f) \colon (W,N_{D_1},N_{D_2}) \to (N_{D_1} \times [0,1],N_{D_1},N_{D_1}).~$$
\end{proposition}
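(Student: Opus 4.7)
The plan is to realize both $\id_{N_{D_1}}$ and the (inverse of the) homotopy equivalence $f$ as \emph{relative} degree one normal maps into $(N_{D_1},M_K)$, and then to show that they define the same element of $\mathcal{N}_{TOP}(N_{D_1},M_K)$. By the very definition of the normal set, this produces a relative normal bordism between them, which is precisely a rel.\ $M_K$ cobordism $(W,N_{D_1},N_{D_2})$ together with the desired degree one normal map $F \colon W \to N_{D_1} \times [0,1]$ restricting to $\id_{N_{D_1}}$ and $f$ on the ends.

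First, I would upgrade the rel.\ boundary homotopy equivalence provided by Lemma~\ref{lem:BG} to a relative degree one normal map. Using a rel.\ boundary homotopy inverse $g \colon N_{D_1} \to N_{D_2}$, set $\xi := g^*\nu_{N_{D_2}}$; then $f^*\xi$ is stably equivalent to $\nu_{N_{D_2}}$, supplying the bundle datum covering $f$. Because $f$ restricts to $\id_{M_K}$ on the boundary, the bundle data on $M_K$ can be arranged to match that of $\id_{N_{D_1}}$, so $f$ becomes a relative degree one normal map into $(N_{D_1},M_K)$; the identity $\id_{N_{D_1}}$ tautologically is.

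Second, I would compare the two classes via the bijection~\eqref{eq:GTOP}. The $H^2(N_{D_1},M_K;\Z_2)$ summand vanishes, as noted in the excerpt, since $N_{D_1}$ is a $\Z$-homology circle. For the $H^4$ summand, evaluation on the fundamental class $[N_{D_1},M_K]$ is injective, so by Proposition~\ref{prop:GTOP} the class of a normal map $g$ is determined by $\tfrac{1}{8}(\sigma(\text{source})-\sigma(N_{D_1}))$. The identity yields $0$ trivially, and $f$ yields $\tfrac{1}{8}(\sigma(N_{D_2})-\sigma(N_{D_1}))$. By Lemma~\ref{lem:BG}, each $N_{D_i}$ is a $K(\Gamma,1)$, hence $H_2(N_{D_i};\Z) \cong H_2(\Gamma;\Z)$. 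For $\Gamma=\Z$ this is $0$, and for $\Gamma=G$ a direct computation from the one-relator presentation $2$-complex of $G$ gives $H_2(G;\Z)=0$. The intersection form on $H_2(N_{D_i};\Z)$ therefore vanishes trivially, so $\sigma(N_{D_i})=0$ for $i=1,2$, and the signature difference is zero.

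Consequently both relative degree one normal maps represent the base class $0 \in \mathcal{N}_{TOP}(N_{D_1},M_K)$, and any relative normal bordism between them furnishes the required $(W,F)$. The only subtle step is the bundle bookkeeping that lifts $f$ to a relative degree one normal map with boundary data matching the identity; everything else reduces to an application of Proposition~\ref{prop:GTOP} together with the asphericity statement of Lemma~\ref{lem:BG}, so this is where I would expect to spend most of the care in the write-up.
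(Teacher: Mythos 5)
Your argument is correct and takes essentially the same route as the paper: identify $\mathcal{N}_{TOP}(N_{D_1},M_K)$ with $H^4(N_{D_1},M_K;\Z)$ via the vanishing of the $\Z_2$-summand, then use Proposition~\ref{prop:GTOP} to see that $\id_{N_{D_1}}$ and $f$ have the same (zero) normal invariant because the signature difference vanishes, so they are normally bordant rel.\ $M_K$. The only cosmetic difference is that the paper gets $\sigma(N_{D_i})=0$ directly from $H_2(N_{D_i};\Z)=0$ (slice disc exteriors are $\Z$-homology circles), whereas you route through asphericity and $H_2(\Gamma;\Z)=0$; both justifications are valid.
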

\begin{proof}
We show that the degree one normal maps~$\id_{N_{D_1}}$ and~$f$ define the same class in the normal set~$\mathcal{N}_{TOP}(N_{D_1},M_K)$.
We already argued that~$H^2(N_{D_1},M_K;\Z_2)=0$, whence the fact that the map~$\operatorname{proj}_1 \colon \mathcal{N}_{TOP}(N_{D_1},M_K) \to H^4(N_{D_1},M_K;\Z)$ described in~\eqref{eq:proj1} is a bijection.
Proposition~\ref{prop:GTOP} now implies that~$\id_{N_{D_1}}$ and~$f$ define the same class in~$\mathcal{N}_{TOP}(N_{D_1},M_K)$: in both cases, we know that~$\tmfrac{1}{8}(\sigma(N_{D_i})-\sigma(N_{D_1}))$ vanishes, since $H_2(N_{D_i};\Z)=0$ for $i=1,2$. This concludes the proof Proposition~\ref{prop:ReducedNormal}.
\end{proof}

\subsection{The surgery obstruction.}
\label{sub:SurgeryObstruction}

Proposition~\ref{prop:ReducedNormal} gives rise to a~$5$-dimensional surgery problem.
This surgery problem has a surgery obstruction in~$L_5(\Z[\Gamma])$.
Here, since the Whitehead groups~$\operatorname{Wh}(\Z \ltimes \Z[\tmfrac{1}{2}])$ and $\operatorname{Wh}(\Z)$ are zero,  we omitted the decorations in the $L$-groups.
That the Whitehead group $\operatorname{Wh}(\Z \ltimes \Z[\tmfrac{1}{2}])$ vanishes is due to Waldhausen~\cite[Theorem~5]{Waldhausen-I}, since $\Z \ltimes \Z[\tmfrac{1}{2}]$ is a torsion-free one-relator group.  We also refer to~\cite[Lemma~6.4]{HambletonKreckTeichner} for a shorter explanation.
The next lemma describes~$L_5(\Z[\Gamma])$ for $\Gamma=\Z,\Z \ltimes \Z[\tmfrac{1}{2}]$.

\begin{lemma}
\label{lem:Lgroup}
For $\Gamma=\Z$ and $\Gamma= \Z \ltimes \Z[\tmfrac{1}{2}]$, there is an isomorphism~$L_5(\Z[\Gamma]) \cong L_4(\Z)$.
\end{lemma}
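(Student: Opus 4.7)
The plan is to treat the two cases separately, reducing each to Shaneson's splitting theorem for $L_\ast(\Z[\Z])$.

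\emph{Case $\Gamma=\Z$.} Since $\op{Wh}(\Z)=0$, no decorations are needed and Shaneson's fibration theorem gives a splitting
\[
L_5(\Z[\Z]) \cong L_5(\Z) \oplus L_4(\Z).
\]
By $4$-periodicity $L_5(\Z)\cong L_1(\Z)=0$, so the first summand drops out and $L_5(\Z[\Z])\cong L_4(\Z)$.

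\emph{Case $\Gamma=G$.} I would exploit the presentation $G=\langle a,c\mid aca^{-1}=c^2\rangle$, which exhibits $G$ as an HNN extension of $\langle c\rangle\cong\Z$ with associated subgroups both $\Z$ and gluing isomorphism $\phi\colon \Z\to\Z$ given by $c\mapsto c^2$. Combined with $\op{Wh}(G)=0$ (recorded in \cite[p.~2148]{FriedlTeichner}), the Waldhausen/Cappell Wang sequence for the $L$-theory of an HNN extension yields
\[
\cdots \to L_n(\Z[\Z]) \xrightarrow{1-\phi_*} L_n(\Z[\Z]) \to L_n(\Z[G]) \to L_{n-1}(\Z[\Z]) \xrightarrow{1-\phi_*} L_{n-1}(\Z[\Z]) \to \cdots.
\]
The remaining task is to identify $1-\phi_*$. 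Using Shaneson's splitting $L_n(\Z[\Z])\cong L_n(\Z)\oplus L_{n-1}(\Z)$, the map $\phi_*$ induced by $t\mapsto t^2$ on $\Z[t^{\pm 1}]$ preserves the augmentation and hence acts as the identity on the $L_n(\Z)$ factor; on the other factor, which geometrically records the $S^1$-direction, $\phi$ is the degree-$2$ self-cover and one checks that $\phi_*$ acts by multiplication by $2$. Therefore $1-\phi_*$ is zero on $L_n(\Z)$ and an isomorphism (namely $-1$) on $L_{n-1}(\Z)$, so both its kernel and its cokernel are canonically $L_n(\Z)$. Feeding this back into the Wang sequence produces a short exact sequence
\[
0 \to L_n(\Z) \to L_n(\Z[G]) \to L_{n-1}(\Z) \to 0.
\]
Specialising to $n=5$ and using $L_5(\Z)=0$ gives $L_5(\Z[G])\cong L_4(\Z)$, as desired.

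\emph{Main obstacle.} The technical heart is the last identification: that the endomorphism induced by $t\mapsto t^2$ acts on the Shaneson summand $L_{n-1}(\Z)$ as multiplication by $2$. This is essentially a transfer calculation for the degree-$2$ self-cover $S^1\to S^1$ and can either be verified directly from the derivation description of the Shaneson splitting or deduced from its geometric interpretation; the factor of $2$ is precisely the degree of $\phi$. Everything else, including applicability of the Wang sequence with trivial decorations, is standard once $\op{Wh}(\Z)=\op{Wh}(G)=0$ is invoked.
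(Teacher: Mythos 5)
Your $\Gamma=\Z$ case is exactly the paper's argument, and your identification of $\phi_*$ on $L_n(\Z[\Z])\cong L_n(\Z)\oplus L_{n-1}(\Z)$ as $\id\oplus(\times 2)$ is correct (it is the covariantly induced map rather than a transfer, but your geometric justification -- the splitting hypersurface pulls back to two parallel copies under the degree-two reference map -- is the right one, and the same fact is what drives the colimit computation in the paper, following Friedl--Teichner). The genuine gap is the exact sequence you start from in the $G$ case. For an HNN extension $G=A\ast_C$ the Cappell--Waldhausen Mayer--Vietoris sequence in quadratic $L$-theory is \emph{not} exact on the nose: it has $\op{UNil}$ correction terms (besides decoration issues governed by lower $K$-theory and Waldhausen Nil, which here are indeed harmless since $\Z[\Z]$ is regular and $\op{Wh}(\Z)=0$). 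Cappell's standard criterion for the vanishing of $\op{UNil}$ is that the edge group be square-root closed, and this fails for $C=\langle c\rangle\subset G=B(1,2)$: the element $a^{-1}ca$ satisfies $(a^{-1}ca)^2=c$ but does not lie in $\langle c\rangle$. Note also that your splitting is an \emph{ascending} HNN extension (a mapping torus of the non-surjective injection $c\mapsto c^2$), so Ranicki's twisted Laurent theorem, which requires an automorphism, does not apply to it either. So the unadorned Wang sequence you write down is not available off the shelf; to use it one would have to prove the relevant $\op{UNil}$ terms vanish, or invoke the Farrell--Jones conjecture for $B(1,2)$ (solvable groups), which is a far heavier input than anything needed here. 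Calling this step ``standard once $\op{Wh}(\Z)=\op{Wh}(G)=0$ is invoked'' is where the proof breaks.

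The paper circumvents precisely this issue by decomposing $G$ as the semidirect product $\Z\ltimes\Z[\tmfrac{1}{2}]$ rather than as an HNN extension over $\langle c\rangle$: then $\Z[G]$ is a Laurent extension of $R=\Z[\Z[\tmfrac{1}{2}]]$ twisted by a genuine ring \emph{automorphism} $\alpha$ (induced by multiplication by $2$ on $\Z[\tmfrac12]$), and Ranicki's Wang-type sequence for twisted Laurent extensions applies with no $\op{UNil}$ terms. The price is that one must compute $L_*(R)$ and the action of $\alpha_*$, which is done using the fact that $L$-groups commute with direct limits and the presentation $\Z[\tmfrac12]=\colim(\Z\xrightarrow{2}\Z\xrightarrow{2}\cdots)$ -- at which point exactly your $\id\oplus(\times 2)$ computation enters. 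If you want to salvage your route, either supply a vanishing argument for the $\op{UNil}$ terms of this particular HNN splitting, or switch to the semidirect product decomposition as in the paper.
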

\begin{proof}
For $\Gamma=\Z$, this follows immediately from Shaneson splitting~\cite{ShanesonSplitting}, namely one has~$L_5(\Z[\Z])=L_4(\Z) \oplus L_5(\Z)=L_4(\Z)$.
We therefore focus on the case $G=\Z \ltimes \Z[\tmfrac{1}{2}]$.
Invoking the Shaneson splitting $L_4(\Z[\Z])=L_4(\Z)$, it is enough to show that
$$L_5(\Z[G]) \cong L_4(\Z[\Z]).$$
Multiplication by~$2$ induces an automorphism of~$\Z[\tmfrac{1}{2}]$.
Let~$\alpha_*$ be the induced automorphism of~$L_n(\Z[\tmfrac{1}{2}])$.
Using Ranicki's long exact sequence for twisted Laurent extensions~\cite{RanickiTwisted} (see also~\cite[Theorem 4.5]{FriedlTeichner}), we obtain the following exact sequence:
\begin{equation}
\label{eq:lesLgroups}
\xymatrix @C-0.3cm{
\cdots \ar[r] & L_5(\Z[\Z[\tfrac{1}{2}]]) \ar[r]^{1-\alpha_*}& L_5(\Z[\Z[\tfrac{1}{2}]]) \ar[r] & L_5(\Z[G]) \ar[r] & L_4(\Z[\Z[\tfrac{1}{2}]]) \ar[r]^{1-\alpha_*} & L_4(\Z[\Z[\tfrac{1}{2}]]) \ar[r] & \cdots
}
\end{equation}
As explained in~\cite[p. 2149]{FriedlTeichner}, one has an isomorphism~$L_4(\Z[\Z[\tfrac{1}{2}]])\cong L_4(\Z[\Z])$, and the induced map~$\alpha_* \colon L_4(\Z[\Z[\tfrac{1}{2}]]) \to L_4(\Z[\Z[\tfrac{1}{2}]])$ is the identity map.
Arguing as in~\cite[p. 2149]{FriedlTeichner}, one can use the fact that~$L$-groups commute with colimits (direct limits) to show that~
$L_5(\Z[\Z[\tfrac{1}{2}]])=0$ (in~\cite{FriedlTeichner}, the authors show that~$ L_3(\Z[\Z[\tfrac{1}{2}]])=0$, but the same argument applies here).
The lemma now follows from the exact sequence displayed in~\eqref{eq:lesLgroups}.
\end{proof}

\subsection{The proof of Theorem~\ref{thm:Goal}.}
\label{sub:ProofTheoremCompatible}
We are now in position to prove Theorem~\ref{thm:Goal}, which states that if~$D_1$ and~$D_2$ are two compatible homotopy~$\Gamma$-ribbon discs for~$K$ with $\Gamma=\Z,\Z \ltimes \Z[\tmfrac{1}{2}]$, then~$D_1$ and~$D_2$ are ambiently isotopic rel.\ boundary.

\begin{proof}[Proof of Theorem~\ref{thm:Goal}]
We first combine the results of the previous lemmas.
Since~$D_1$ and~$D_2$ are compatible, Lemma~\ref{lem:BG} ensures the existence of a homotopy equivalence~$f \colon N_{D_1} \to N_{D_2}$ rel.\ boundary.
 Proposition~\ref{prop:ReducedNormal} provides a relative degree one normal map
~$$(F,\id_{N_{D_1}},f) \colon (W,N_{D_1},N_{D_2}) \to (N_{D_1} \times [0,1],N_{D_1},N_{D_1}).$$
 The surgery obstruction~$\sigma(F)$ lies in~$L_5(\Z[\Gamma])$.
Lemma~\ref{lem:Lgroup} implies that~$L_5(\Z[\Gamma]) \cong L_4(\Z)$ and it is known that~$L_4(\Z)=L_0(\Z) \cong 8\Z$ is detected by the signature; see e.g.~\cite{HusemollerMilnor}.
As a consequence, we think of~$\sigma(F)$ as an integer.
Next, we modify $F$ to a new surgery problem~$F'$ with vanishing surgery obstruction.
This is achieved by connect summing~$W$ with~$\sigma(F)$ copies of the degree one normal map~$S^1 \times \pm E_8 \to S^1 \times S^4$.
As in~\cite[p. 206]{FreedmanQuinn}, this connect sum is performed along loops; the next paragraph provides some details on this construction.

First, we may assume that the degree one normal map~$F \colon W \to N_{D_1} \times [0,1]$ is a homeomorphism~$F^{-1}(N_{D_1} \times [0,\varepsilon]) \to N_{D_1} \times [0,\varepsilon]$ in a collar neighbourhood of $N_{D_1} \times [0,1]$.
Next, choose an embedded~$S^1 \times D^4 \subset N_{D_1} \times [0,\varepsilon]$ whose core represents a meridian of $D_1$, and consider its preimage~$F^{-1}(S^1 \times D^4) \subseteq W$.
The domain of our new map is obtained by replacing the domain of the map~$F^{-1}(S^1 \times D^4) \to S^1 \times D^4$ with the domain of the degree one map~$S^1 \times \operatorname{cl}(E_8 \setminus D^4) \to S^1 \times D^4$.
Our new degree one normal map $F'$ is obtained by modifying~$F$ using this map on the new~$S^1 \times \operatorname{cl}(E_8 \setminus D^4)$.

The outcome of this construction is a degree one normal map~$F' \colon W' \to (N_{D_1} \times [0,1])$ with vanishing surgery obstruction and which coincides with~$F$ on the boundary.
It follows that~$F'$ is normal bordant rel.\ $M_K \times [0,1]$ to a homotopy equivalence.
We deduce that~$N_{D_1}$ and~$N_{D_2}$ are~$s$-cobordant rel.\ boundary.
Since the group~$\Gamma$ is solvable (for $\Z$ this is immediate, while $G=\Z \ltimes \Z[\tmfrac{1}{2}]$ is metabelian i.e.~$G^{(2)}=1$), it is good in the sense of Freedman~\cite{FreedmanQuinn} (see also~\cite{FreedmanTeichner, KrushkalQuinn}).
The 5-dimensional $s$-cobordism theorem thus implies that~$N_{D_1}$ is homeomorphic to~$N_{D_2}$ rel.\ boundary~\cite[Theorem 7.1A]{FreedmanQuinn}.
Lemma~\ref{lem:Prelim} below shows that this homeomorphism gives rise to an ambient isotopy from~$D_1$ to~$D_2$.
\end{proof}

The next lemma concludes the proof of Theorem~\ref{thm:Goal}.

\begin{lemma}
\label{lem:Prelim}
Let~$D_1$ and~$D_2$ be slice discs for~$K$. The following assertions are equivalent:
\begin{enumerate}
\item the discs $D_1$ and~$D_2$ are ambiently isotopic rel.\ boundary;
\item the exteriors~$N_{D_1}$ and~$N_{D_2}$ are homeomorphic rel.\ boundary.
\end{enumerate}
\end{lemma}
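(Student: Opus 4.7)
The implication $(1) \Rightarrow (2)$ is immediate: any ambient isotopy of $(D^4,D_1)$ rel.\ boundary carrying $D_1$ to $D_2$ restricts at time~$1$ to a homeomorphism $N_{D_1} \to N_{D_2}$ rel.\ boundary.

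For the converse, fix a homeomorphism $h \colon N_{D_1} \to N_{D_2}$ rel.\ boundary. The plan is to first extend $h$ to a self-homeomorphism $\tilde h \colon D^4 \to D^4$ that restricts to the identity on $S^3 = \partial D^4$ and satisfies $\tilde h(D_1) = D_2$, and then to apply the Alexander trick to obtain an ambient isotopy from $D_1$ to $D_2$.

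For the extension step, I would appeal to the existence of topological tubular neighbourhoods of locally flat $2$-discs in a $4$-manifold, established by Freedman--Quinn~\cite{FreedmanQuinn}. Choose tubular neighbourhoods $\nu D_i \cong D^2 \times D^2$ meeting $S^3$ in a common tubular neighbourhood $\nu K$ of $K$; since each $D_i$ is contractible the normal bundle is trivial, and both trivialisations can be arranged to induce the $0$-framing of $K$ on $\partial \nu K$. One then obtains an identification $\nu D_1 \cong D^2 \times D^2 \cong \nu D_2$ matching $D_1$ with $D_2$ and restricting to the identity on both $\nu K \subset S^3$ and the vertical boundary $\partial_{\mathrm{vert}}\nu D_i \subset M_K$. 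Since $h$ is the identity on $M_K = \partial N_{D_i}$, gluing $h$ to this identity on the identified tubular neighbourhoods produces a well-defined homeomorphism $\tilde h \colon D^4 \to D^4$ with the required properties.

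Finally, the Alexander trick yields an ambient isotopy $H_t \colon D^4 \to D^4$ rel.\ boundary from $H_0 = \mathrm{id}_{D^4}$ to $H_1 = \tilde h$; the path $t \mapsto H_t(D_1)$ is then the desired ambient isotopy from $D_1$ to $D_2$ rel.\ boundary, since $H_t|_{S^3} = \mathrm{id}$ fixes $\partial D_1 = K = \partial D_2$ throughout. The main technical point is arranging compatible topological tubular neighbourhoods of the two slice discs, which rests on the topological normal bundle theorem; the remaining gluing and the Alexander trick are elementary.
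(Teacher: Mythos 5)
Your argument is correct and essentially the same as the paper's: the paper likewise extends the rel.\ boundary homeomorphism of exteriors over a $2$-handle $D^2\times D^2$ glued along $D_i\times\partial D^2\subset M_K$ (i.e.\ the reglued tubular neighbourhood of the disc) to get a self-homeomorphism of $D^4$ carrying $D_1$ to $D_2$, and then concludes with the Alexander trick. Your extra care with the topological normal bundle and the $0$-framing is exactly the point the paper treats implicitly via its identification of $D_1\times\partial D^2$ with $D_2\times\partial D^2$.
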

\begin{proof}
Let~$(g_t \colon D^4 \to D^4)_{t \in [0,1]}$ be an ambient  isotopy rel.\ boundary from~$D_1$ to~$D_2$.
In other words, the~$g_t$ are homeomorphisms,~$g_0=\operatorname{id}_{D^4}$ and~$g_1 \colon D^4 \stackrel{\cong}{\to} D^4$ satisfies~$g_1(D_1)=~D_2$.
It follows that~$g_1$ induces a well defined rel.\ boundary homeomorphism~$N_{D_1} \to~N_{D_2}$.

Now to the converse.
Start from a rel.\ boundary homeomorphism~$f \colon N_{D_1} \to~N_{D_2}$.
We wish to attach~$2$-handles to~$N_{D_1}$ and~$N_{D_2}$ in order to recover a self-homeomorphism of~$D^4$.
Note that for~$i=1,2$, we have
$$ M_K=\partial N_{D_i}\cong \overline{S^3 \setminus (K \times D^2)} \cup (D_i \times \partial D^2).$$
As a consequence, we have an identification of~$D_1 \times \partial D^2$ with~$D_2 \times \partial D^2$.
Making use of this identification, we attach a two handle~$D^2 \times D^2$ to both~$N_{D_1}$ and~$N_{D_2}$ with core~$D_1 \times D^2=D_2 \times D^2$.
The resulting manifolds are homeomorphic to~$D^4$ and respectively contain~$D_1$ and~$D_2$ as slice discs for~$K$.
Since the homeomorphism~$f$ fixes~$M_K=\partial N_{D_1}$ pointwise, it extends to a well defined homeomorphism
$$f':=f \cup \id_{D^2 \times D^2} \colon D^4 \to D^4.$$
By construction, this homeomorphism carries~$D_1$ to~$D_2$.
Since~$f$ is equal to the identity on the boundary, so is~$f'$.
We can therefore apply Alexander's trick: this result implies that~$f'$ is isotopic rel.\ boundary to the identity homeomorphism.
We have therefore established that~$D_1$ and~$D_2$ are ambiently isotopic rel.\ boundary.
This concludes the proof of the lemma.
\end{proof}

\section{The proof of Theorem~\ref{thm:Intro}.}
\label{sec:Thm11}

From now on, we write $\Z[t^{\pm 1}]$ instead of $\Z[\Z]$ and recall that the lagrangian induced by a homotopy ribbon disc~$D$ is
\[P_D:=\ker(H_1(M_K;\Z[t^{\pm 1}]) \to H_1(N_D;\Z[t^{\pm 1}])).\]
Thanks to Theorem~\ref{thm:Goal}, in order to conclude the proof of Theorem~\ref{thm:Intro}, it remains to show that if two~$(\Z \ltimes \Z[\tmfrac{1}{2}])$-homotopy ribbon discs induce the same lagrangian of the Blanchfield pairing, then they are compatible.
In fact, in Proposition~\ref{prop:ReformulateCompatibility} below, we will show that these two conditions are equivalent.

\medbreak

First we show that if~$D$ is homotopy ribbon, then the Alexander module~$H_1(N_D;\Z[t^{\pm 1}])$ can be described as a quotient of the Alexander module~$H_1(X_K;\Z[t^{\pm 1}])$ by the lagrangian~$P_D$.

\begin{lemma}
\label{lem:AlexModule}
If~$D$ is a homotopy ribbon disc for a knot~$K$, then the inclusion~$\iota_D \colon X_K \hookrightarrow N_D$ induces a~$\Z[t^{\pm 1}]$-isomorphism
$$(\iota_D)_* \colon H_1(X_K;\Z[t^{\pm 1}])/P_D \stackrel{\cong}{\to} H_1(N_D;\Z[t^{\pm 1}]).$$
\end{lemma}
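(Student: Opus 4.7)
The plan is to reduce the statement to two standard ingredients: the classical identification $H_1(X_K;\Z[t^{\pm 1}]) \cong H_1(M_K;\Z[t^{\pm 1}])$, and a short diagram chase that converts the homotopy ribbon hypothesis into a surjectivity statement on infinite cyclic covers.

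First, I would invoke the well-known fact that the inclusion $X_K \hookrightarrow M_K$ induces an isomorphism $H_1(X_K;\Z[t^{\pm 1}]) \xrightarrow{\cong} H_1(M_K;\Z[t^{\pm 1}])$. This follows from the long exact sequence of the pair~$(M_K, X_K)$, since $M_K \setminus X_K$ is a solid torus attached along the zero-framed longitude, whose core is null-homologous in~$X_K$. Because $\iota_D$ factors as $X_K \hookrightarrow M_K = \partial N_D \hookrightarrow N_D$, commutativity combined with this isomorphism shows that, under the induced identification of Alexander modules, the kernel of $(\iota_D)_*$ coincides with the kernel of $H_1(M_K;\Z[t^{\pm 1}]) \to H_1(N_D;\Z[t^{\pm 1}])$, which is $P_D$ by definition. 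Hence $(\iota_D)_*$ descends to a well-defined injection on the quotient $H_1(X_K;\Z[t^{\pm 1}])/P_D$, and it only remains to establish surjectivity of $(\iota_D)_*$.

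For surjectivity, I would exploit the homotopy ribbon hypothesis: the inclusion induces a surjection $\phi \colon \pi_1(X_K) \twoheadrightarrow \pi_1(N_D)$. Composing with the abelianisation $\pi_1(N_D) \twoheadrightarrow H_1(N_D) \cong \Z$ produces a surjective homomorphism that agrees with the abelianisation $\pi_1(X_K) \twoheadrightarrow H_1(X_K) \cong \Z$, since both are determined by sending a meridian of~$K$ to a generator. A direct diagram chase then shows that $\phi$ restricts to a surjection between the kernels of the respective abelianisations; equivalently, the lifted map between infinite cyclic covers $\widetilde{X_K} \to \widetilde{N_D}$ is surjective on~$\pi_1$. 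Abelianising, the induced map $H_1(X_K;\Z[t^{\pm 1}]) = H_1(\widetilde{X_K}) \to H_1(\widetilde{N_D}) = H_1(N_D;\Z[t^{\pm 1}])$ is surjective, which completes the proof.

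I do not anticipate any serious obstacle: the argument combines a standard Alexander module computation with a short group-theoretic diagram chase. The only delicate bookkeeping is to confirm that the two abelianisations are intertwined by~$\phi$ with a $+1$ sign under the chosen orientation of~$K$, but this is immediate since the meridian of~$K$ represents a generator of~$H_1$ in both cases.
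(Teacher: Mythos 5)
Your argument is correct and is essentially the paper's own proof: after the standard identification $H_1(X_K;\Z[t^{\pm 1}])\cong H_1(M_K;\Z[t^{\pm 1}])$ (which the paper also invokes without proof), everything reduces to surjectivity of $(\iota_D)_*$, and you obtain it exactly as the paper does, by combining the homotopy ribbon surjection on $\pi_1$ with the isomorphism on integral $H_1$ to get a surjection $\pi_1(X_K)^{(1)}\twoheadrightarrow \pi_1(N_D)^{(1)}$ and then abelianising to the derived quotients $\pi_1^{(1)}/\pi_1^{(2)}$, i.e.\ to the Alexander modules. One small correction to your aside on the identification: the core of the surgery solid torus is a meridian of $K$ and generates $H_1(X_K;\Z)$, so it is not null-homologous in $X_K$; the relevant point is that the attaching curve, the zero-framed longitude, bounds a lift of a Seifert surface and hence vanishes in $H_1$ of the infinite cyclic cover of $X_K$, which is what makes the $\Z[t^{\pm 1}]$-coefficient comparison go through.
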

\begin{proof}
It is enough to show that~$\iota_D$ induces a surjection~$(\iota_D)_* \colon H_1(X_K;\Z[t^{\pm 1}]) \stackrel{\cong}{\to} H_1(N_D;\Z[t^{\pm 1}])$ between the Alexander modules.
Recall that these modules can be identified with derived quotients, namely
\[H_1(N_D;\Z[t^{\pm 1}]) \cong \pi_1(N_D)^{(1)}/\pi_1(N_D)^{(2)} \text{ and } H_1(X_K;\Z[t^{\pm 1}]) \cong \pi_1(X_K)^{(1)}/\pi_1(X_K)^{(2)}.\]
The lemma will therefore follow once we observe that~$\iota_{D}$ restricts to a surjection
$$ \iota_D \colon \pi_1(X_K)^{(1)} \to  \pi_1(N_D)^{(1)}.$$
Indeed: if~$ \iota_D$ is a surjection, then so is~$(\iota_D)_*$.
Next, we use the abelianisation homomorphisms~$\phi_K$ and~$\phi_D$ of~$\pi_1(X_K)$ and~$\pi_1(N_D)$.
 The inclusion~$\iota_D \colon X_K \hookrightarrow N_D$ induces an isomorphism~$H_1(X_K;\Z) \stackrel{\cong}{\to} H_1(N_D;\Z)$.
We also denote this map by~$\iota_D$ and observe that~$\iota_D \circ \phi_K=\phi_D \circ \iota_D$.
Furthermore, the kernels of~$\phi_K$ and~$\phi_D$ are isomorphic to the respective commutator subgroups:
\begin{align*}
&\pi_1(N_D)^{(1)}=\ker(\phi_D),\\
&\pi_1(X_K)^{(1)}=\ker(\phi_K).
\end{align*}
The lemma will thus be proved once we show that~$\iota_D$ induces a surjection
$\ker(\phi_K) \to~\ker(\phi_D)$.
Let~$y$ lie in~$\ker(\phi_D)$.
Since~$D$ is homotopy ribbon, the map
~$\iota_{D} \colon \pi_1(X_K) \to~\pi_1(N_D)$ is surjective and we can therefore choose an~$x \in \pi_1(X_K)$ such that~$\iota_D(x)=y$.
 Using the aforementioned equality~$\iota_D \circ \phi_K=\phi_D \circ \iota_D$, we deduce that~$\iota_D (\phi_K(x))=\phi_D(\iota_D(x)) = \phi_D(y)=0$.
 Since~$\iota_D$ is an isomorphism on homology, we obtain~$\phi_K(x)=0$, establishing that~$x$ lies in~$\ker(\phi_K)$.
This concludes the proof of the lemma.
\end{proof}

Next, we describe two consequences of
Lemma~\ref{lem:AlexModule}.
\begin{corollary}
\label{cor:FromModuleToGroup}
Let~$D$ be a homotopy ribbon disc for a knot~$K$.
\begin{enumerate}
\item The inclusion~$M_K \hookrightarrow N_D$ induces a~$\Z[t^{\pm 1}]$-isomorphism
$$(\iota_D)_* \colon H_1(M_K;\Z[t^{\pm 1}])/P_D \stackrel{\cong}{\to} H_1(N_D;\Z[t^{\pm 1}]).$$
\item Set~$G:=\Z \ltimes \Z[\tmfrac{1}{2}]$.
If~$D_1$ and~$D_2$ are~$G$-homotopy ribbon discs, then a $\Z[t^{\pm 1}]$-linear isomorphism~$f \colon H_1(N_{D_1};\Z[t^{\pm 1}]) \stackrel{\cong}{\to} H_1(N_{D_2};\Z[t^{\pm 1}])$ that satisfies $f \circ (\iota_{D_1})_* = (\iota_{D_2})_* $ gives rise to a compatible isomorphism~$\pi_1(N_{D_1}) \stackrel{\cong}{\to} \pi_1(N_{D_2})$.
\end{enumerate}
\end{corollary}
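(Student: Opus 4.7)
The plan is to address each part of the corollary separately, with part (1) following quickly from Lemma \ref{lem:AlexModule} and part (2) resting on the metabelian structure of $G$.

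For part (1), the inclusion $\iota_D \colon X_K \hookrightarrow N_D$ factors as $X_K \hookrightarrow M_K \hookrightarrow N_D$, since $X_K$ sits naturally inside $M_K = \partial N_D$. Lemma \ref{lem:AlexModule} establishes that the composite induces a surjection on $H_1(\cdot;\Z[t^{\pm 1}])$, so the second factor $M_K \hookrightarrow N_D$ is also surjective on Alexander modules. Since $P_D$ is by definition the kernel of this surjection, the first isomorphism theorem yields the desired isomorphism $H_1(M_K;\Z[t^{\pm 1}])/P_D \cong H_1(N_D;\Z[t^{\pm 1}])$.

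For part (2), the essential input is that $G=\Z \ltimes \Z[\tmfrac{1}{2}]$ is metabelian: $G^{(2)}=1$. Consequently, the commutator subgroup $\pi_1(N_{D_i})^{(1)}$ is abelian, giving a canonical identification $\pi_1(N_{D_i})^{(1)} \cong H_1(N_{D_i};\Z[t^{\pm 1}])$ as $\Z[t^{\pm 1}]$-modules. The strategy is to fix once and for all a meridian $\tilde\mu \in \pi_1(M_K)$ of $K$ and set $\mu_i := (\iota_{D_i})_*(\tilde\mu)$. Since $\mu_i$ maps to a generator of $H_1(N_{D_i};\Z) = \Z$, it has infinite order and yields a splitting $\pi_1(N_{D_i}) = \langle \mu_i \rangle \ltimes \pi_1(N_{D_i})^{(1)}$ in which conjugation by $\mu_i$ realises the $\Z[t^{\pm 1}]$-action on the Alexander module.

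With these splittings in hand, I would define $\hat{f} \colon \pi_1(N_{D_1}) \to \pi_1(N_{D_2})$ by the rule $\hat{f}(\mu_1^n \cdot x) := \mu_2^n \cdot f(x)$ for $n \in \Z$ and $x \in \pi_1(N_{D_1})^{(1)}$. Checking that $\hat{f}$ is a homomorphism reduces to the identity $f(\mu_1^{-m} x \mu_1^m) = \mu_2^{-m} f(x) \mu_2^m$, which follows from the $\Z[t^{\pm 1}]$-linearity of $f$ since conjugation by $\mu_i$ realises the $t$-action; bijectivity is immediate from bijectivity of $f$ on each factor. Compatibility is then verified by writing an arbitrary $g \in \pi_1(M_K)$ as $g = \tilde\mu^n \cdot h$ with $h \in \pi_1(M_K)^{(1)}$ and applying the hypothesis $f \circ (\iota_{D_1})_* = (\iota_{D_2})_*$ to $h$. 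No serious obstacle is expected; the main subtlety is simply to ensure that the same meridian $\tilde\mu$ is used on both sides, so that the $\Z[t^{\pm 1}]$-actions on $H_1(N_{D_1};\Z[t^{\pm 1}])$ and $H_1(N_{D_2};\Z[t^{\pm 1}])$ correspond to conjugation by $\mu_1$ and $\mu_2$ respectively, and the hypothesis on $f$ translates directly into the needed compatibility for $\hat{f}$.
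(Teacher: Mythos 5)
Your proposal is correct and follows essentially the same route as the paper: part (1) is the paper's combination of Lemma~\ref{lem:AlexModule} with the passage from $X_K$ to $M_K$ (you phrase it via the factorisation $X_K \hookrightarrow M_K \hookrightarrow N_D$ rather than the identification of their Alexander modules), and part (2) is the paper's construction, using metabelianness of $G$ to write $\pi_1(N_{D_i}) \cong \Z \ltimes H_1(N_{D_i};\Z[t^{\pm 1}])$ via a splitting determined by one fixed meridian of $K$ and extending $f$ by matching the meridians on the $\Z$ factor. Your explicit formula $\hat f(\mu_1^n x)=\mu_2^n f(x)$ and the conjugation check are just the hands-on version of the paper's statement that $(\varphi,f)$ defines an isomorphism of semidirect products compatible with the inclusions.
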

\begin{proof}
To prove the first assertion, combine the isomorphism~$H_1(X_K;\Z[t^{\pm 1}])=H_1(M_K;\Z[t^{\pm 1}])$ with Lemma~\ref{lem:AlexModule}.
Next, we prove the second assertion.
The groups~$ \pi_1(M_K)^{(1)}/\pi_1(M_K)^{(2)} = H_1(M_K;\Z[t^{\pm 1}])$ and~$\pi_1(M_K)/\pi_1(M_K)^{(1)}=H_1(M_K;\Z)$ fit into the following short exact sequence of groups:
$$ 1 \to H_1(M_K;\Z[t^{\pm 1}]) \to \pi_1(M_K)/\pi_1(M_K)^{(2)} \stackrel{p}{\to} H_1(M_K;\Z) \to 1. ~$$
Since~$H_1(M_K;\Z) \cong \Z$ is freely generated by a meridian of~$K$, if we fix a \emph{based} meridian for~$K$, then we get a splitting~$s$ of~$p$.
Thus, the map
\begin{align*}
\Z \ltimes H_1(M_K;\Z[t^{\pm 1}]) &\stackrel{\cong}{\to} \pi_1(M_K)/\pi_1(M_K)^{(2)} \\ (n,h) &\mapsto s(n)h
\end{align*} is an isomorphism.
Next, let~$D$ be a~$G$-homotopy ribbon disc for~$K$.
Since the inclusion~$M_K \hookrightarrow~N_D$ induces an isomorphism~$H_1(M_K;\Z) \stackrel{\cong}{\to} H_1(N_D;\Z)$, the choice of a based meridian for~$K$ also gives a splitting of~$ \pi_1(N_D)/\pi_1(N_D)^{(2)} \twoheadrightarrow H_1(N_D;\Z)$, and the same argument as above yields an isomorphism~$\Z \ltimes H_1(N_D;\Z[t^{\pm 1}]) \cong \pi_1(N_D)/\pi_1(N_D)^{(2)}$.
On the other hand, since the group~$\pi_1(N_{D}) \cong~G$ is metabelian (i.e.\ $G$ satisfies $G^{(2)}=1$), we have~$\pi_1(N_{D}) = \pi_1(N_{D})/\pi_1(N_{D})^{(2)}$.
Combining these facts, we deduce that
$$ \pi_1(N_{D}) = \pi_1(N_{D})/\pi_1(N_{D})^{(2)} \cong \Z \ltimes H_1(N_D;\Z[t^{\pm 1}]).$$
To conclude, let~$D_1$ and~$D_2$ be~$G$-homotopy ribbon discs for the knot~$K$, and fix a~$\Z[t^{\pm 1}]$-linear isomorphism~$f \colon H_1(N_{D_1};\Z[t^{\pm 1}]) \stackrel{\cong}{\to} H_1(N_{D_2};\Z[t^{\pm 1}]).$
The isomorphism~$\pi_1(N_{D_1})  \stackrel{\cong}{\to} \pi_1(N_{D_2})$ is constructed by combining~$f$ with the isomorphism~$\varphi \colon H_1(N_{D_1};\Z)=\Z \stackrel{\cong}{\to} \Z=H_1(N_{D_2};\Z)$ that maps a meridian of~$D_1$ to a meridian of~$D_2$.
More precisely, the aforementioned splitting~$s \colon H_1(M_K;\Z) \to \pi_1(M_K)/\pi_1(M_K)^{(2)}$ of~$p$ induces analogous splittings for~$N_{D_1}$ and~$N_{D_2}$ and this choice ensures that~$(\varphi,f)$ gives an isomorphism~$\Z \ltimes H_1(N_{D_1};\Z[t^{\pm1}]) \to \Z \ltimes H_1(N_{D_2};\Z[t^{\pm1}])$. The second assertion follows and the lemma is proved.
\end{proof}

The following proposition concludes the proof of Theorem~\ref{thm:Intro}.

\begin{proposition}
\label{prop:ReformulateCompatibility}
Set~$G:=\Z \ltimes \Z[\tmfrac{1}{2}]$. Two~$G$-homotopy ribbon discs~$D_1$ and~$D_2$ for a knot~$K$ induce the same lagrangian if and only if they are compatible.
\end{proposition}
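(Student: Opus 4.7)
My plan is to prove the two implications separately, leveraging Lemma~\ref{lem:AlexModule} and Corollary~\ref{cor:FromModuleToGroup}, which together translate statements about $\pi_1$ into statements about the Alexander module and vice versa. The overall strategy is that compatibility is precisely the $\pi_1$-level shadow of agreement of the induced lagrangians on the Alexander module, and the metabelian structure $G^{(2)}=1$ ensures the two levels carry the same information.

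For the forward direction, I would start from a compatible isomorphism $f \colon \pi_1(N_{D_1}) \stackrel{\cong}{\to} \pi_1(N_{D_2})$ satisfying $f \circ \iota_{D_1} = \iota_{D_2}$. Since group isomorphisms preserve the derived series, $f$ descends to an isomorphism $f_* \colon H_1(N_{D_1};\Z[t^{\pm 1}]) \stackrel{\cong}{\to} H_1(N_{D_2};\Z[t^{\pm 1}])$ on the Alexander modules, viewed as $\pi_1^{(1)}/\pi_1^{(2)}$. The next step is to verify that $f_*$ is $\Z[t^{\pm 1}]$-linear, which reduces to showing that $f$ intertwines the abelianisation maps $\phi_{D_i} \colon \pi_1(N_{D_i}) \twoheadrightarrow \Z$. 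This follows because $\phi_{D_2} \circ f$ and $\phi_{D_1}$ agree after precomposition with the homotopy ribbon surjection $\iota_{D_1}$, both composites equalling $\phi_K$. Once $\Z[t^{\pm 1}]$-linearity is in hand, the identity $f \circ \iota_{D_1} = \iota_{D_2}$ passes to Alexander modules as $f_* \circ (\iota_{D_1})_* = (\iota_{D_2})_*$, and taking kernels yields $P_{D_1} = P_{D_2}$.

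For the reverse direction, assume $P_{D_1} = P_{D_2}$. By Corollary~\ref{cor:FromModuleToGroup}(1), the maps
\[
(\iota_{D_i})_* \colon H_1(M_K;\Z[t^{\pm 1}])/P_{D_i} \stackrel{\cong}{\to} H_1(N_{D_i};\Z[t^{\pm 1}])
\]
are $\Z[t^{\pm 1}]$-linear isomorphisms. I would then set $f := (\iota_{D_2})_* \circ (\iota_{D_1})_*^{-1}$; this is a well defined $\Z[t^{\pm 1}]$-linear isomorphism precisely because the two quotients agree, and it tautologically satisfies $f \circ (\iota_{D_1})_* = (\iota_{D_2})_*$. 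Corollary~\ref{cor:FromModuleToGroup}(2) then promotes $f$ to a compatible isomorphism $\pi_1(N_{D_1}) \stackrel{\cong}{\to} \pi_1(N_{D_2})$.

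The only conceptually delicate ingredient is the promotion of an Alexander module isomorphism to a $\pi_1$ isomorphism in the reverse direction; this is what genuinely uses the metabelian hypothesis, encoded in a choice of based meridian, and is already contained in Corollary~\ref{cor:FromModuleToGroup}(2). Consequently, I do not anticipate significant obstacles here: the proof is essentially a clean packaging of the preceding lemmas, with the minor bookkeeping consisting of verifying $\Z[t^{\pm 1}]$-linearity of $f_*$ in the forward direction.
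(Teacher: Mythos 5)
Your proposal is correct and follows essentially the same route as the paper: descend a compatible isomorphism to the derived quotients to get $f_*\circ(\iota_{D_1})_*=(\iota_{D_2})_*$ and hence equal lagrangians, and conversely define $f_*:=(\iota_{D_2})_*\circ(\iota_{D_1})_*^{-1}$ using Corollary~\ref{cor:FromModuleToGroup}(1) and promote it to a compatible isomorphism via Corollary~\ref{cor:FromModuleToGroup}(2). Your explicit check of $\Z[t^{\pm 1}]$-linearity via the abelianisations (using surjectivity of $\iota_{D_1}$) is a small bookkeeping point the paper leaves implicit, but it does not change the argument.
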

\begin{proof}
As in Corollary~\ref{cor:FromModuleToGroup}, we use $(\iota_{D_j})_*$ to denote the inclusion induced maps on the level of the Alexander modules.
Assume that~$D_1$ and~$D_2$ are compatible and choose a compatible isomorphism~$f \colon \pi_1(N_{D_1}) \stackrel{\cong}{\to} \pi_1(N_{D_2})$.
Passing to the derived quotients, this isomorphism induces a $\Z[t^{\pm 1}]$-linear isomorphism $f_*$ that satisfies $f_* \circ (\iota_{D_1})_*=(\iota_{D_2})_*$.
We therefore obtain~$P_{D_1}=P_{D_2}$, as desired.

Conversely, assume that~$P_{D_1}=P_{D_2}$.
Using the first item of Corollary~\ref{cor:FromModuleToGroup}, we know that the inclusions induce
isomorphisms
$ (\iota_{D_j})_* \colon  H_1(M_K;\Z[t^{\pm 1}])/P_{D_j} \stackrel{\cong}{\to} H_1(N_{D_j};\Z[t^{\pm 1}])$ for~$j=1,2$.
Consequently, setting~$f_*:=(\iota_{D_2})_* \circ (\iota_{D_1})_*^{-1}$, we obtain a $\Z[t^{\pm 1}]$-linear isomorphism~$H_1(N_{D_1};\Z[t^{\pm 1}]) \stackrel{\cong}{\to} H_1(N_{D_2};\Z[t^{\pm 1}])$.
By construction, this isomorphism satisfies~$f_* \circ (\iota_{D_1})_*=(\iota_{D_2})_*$
Using the second item of Corollary~\ref{cor:FromModuleToGroup}, we can thus extend~$f_*$ to a compatible isomorphism~$\pi_1(N_{D_1}) \stackrel{\cong}{\to} \pi_1(N_{D_2})$.
This concludes the proof of the proposition.
\end{proof}

\section{Characterising $G$-homotopy ribbon discs.}\label{section:characterising}

In this section, as promised in Section~\ref{subsection:characterisation-intro}, we explain how our results combine with those of Friedl-Teichner~\cite{FriedlTeichner} to give a characterisation of $G$-homotopy ribbon discs. In particular, we prove Theorem~\ref{thm:characterisation-Intro} from the introduction.
Given a $\Z[t^{\pm 1}]$-module $P$, we use $\overline{P}$ to denote~$P$ with the~$\Z[t^{\pm 1}]$-module structure induced by $t \cdot x=t^{-1}x$.
Throughout this section, we also adopt the convention that $\Z[\tmfrac{1}{2}]$ denotes either~$\Z[t^{\pm 1}]/(t-2)$ or $\Z[t^{\pm 1}]/(2t-1)$, and that if~$\Z[\tmfrac{1}{2}]=\Z[t^{\pm 1}]/p(t)$ for~$p(t)=t-2$ or~$2t-1$, then $\overline{\Z[\tmfrac{1}{2}]}$ denotes $\Z[t^{\pm 1}]/p(t^{-1})$.
\medbreak

We start with some necessary conditions for a knot $K$ to bound a $G$-homotopy ribbon disc, some of which were touched on in~\cite{FriedlTeichner}.

\begin{proposition}\label{prop:necessary-condition}
Let $D$ be a $G$-homotopy ribbon disc for a knot $K$.
\begin{enumerate}
  \item The Alexander module of $K$ sits in a short exact sequence
  \begin{equation}
  \label{eq:NecessaryCondition}
0 \to P_D \to H_1(M_K;\Z[t^{\pm 1}]) \to \overline{P}_D  \to 0,
  \end{equation}
with the induced lagrangian $P_D$
 isomorphic to either $\Z[t^{\pm 1}]/(t-2)$ or $\Z[t^{\pm 1}]/(2t-1)$. In particular, $\Delta_K \doteq (t-2)(2t-1)$.
\item With respect to the inclusion induced map  $\phi \colon \pi_1(M_K) \twoheadrightarrow \pi_1(D^4 \setminus \nu D) \cong G$, one has
\begin{equation}
\label{eq:ExtCondition}
\operatorname{Ext}_{\Z[G]}^1(H_1(M_K;\Z[G]),\Z[G])=0.
\end{equation}
\end{enumerate}
\end{proposition}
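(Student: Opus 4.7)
The plan is to treat the two parts separately, in each case exploiting the asphericity of $N_D$ established in Lemma~\ref{lem:BG}. For Part (1), I would first apply the first item of Corollary~\ref{cor:FromModuleToGroup} to identify $H_1(M_K;\LZ)/P_D \cong H_1(N_D;\LZ)$; since $N_D$ is a $K(G,1)$, the latter equals $[G,G]^{ab} = \Z[\tmfrac{1}{2}]$ as abelian group, endowed with the $\LZ$-action coming from conjugation by a lift of the abelianisation generator of $G$. The relation $aca^{-1} = c^2$ makes this conjugation act as multiplication by $2$, so the quotient $H_1(M_K;\LZ)/P_D$ is isomorphic to $\LZ/(t-2)$ or $\LZ/(2t-1)$, depending on the orientation of the chosen meridian. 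To identify $P_D$ itself, I would invoke the fact that $P_D$ is a lagrangian for the Blanchfield pairing (mentioned in the introduction), which yields a canonical isomorphism
\[P_D \cong \overline{\Hom_{\LZ}\bigl(H_1(M_K;\LZ)/P_D,\ \Q(t)/\LZ\bigr)};\]
a direct computation of the Hom (as the $p$-torsion of $\Q(t)/\LZ$ for $p=t-2$ or $2t-1$) then shows $P_D \cong \overline{H_1(M_K;\LZ)/P_D}$, which is the other factor in $\{\LZ/(t-2),\LZ/(2t-1)\}$. The short exact sequence follows, and $\Delta_K \doteq (t-2)(2t-1)$ is the order of the Alexander module.

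For Part (2), the strategy is to identify $H_1(M_K;\Z[G])$ with $H^2(G;\Z[G])$ and then show its $\Ext^1$ vanishes using a short free resolution over $\Z[G]$. Using $H_i(N_D;\Z[G]) = 0$ for $i \geq 1$ (asphericity) and $H_0(M_K;\Z[G]) \cong H_0(N_D;\Z[G])$ (from $\pi_1$-surjectivity), the long exact sequence of $(N_D,M_K)$ gives $H_1(M_K;\Z[G]) \cong H_2(N_D,M_K;\Z[G])$. Combining Poincar\'e-Lefschetz duality with the collapse of the UCSS coming from $N_D \simeq K(G,1)$ identifies this group with $H^2(N_D;\Z[G]) = \Ext^2_{\Z[G]}(\Z,\Z[G]) = H^2(G;\Z[G])$; write $D$ for this module. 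The presentation $2$-complex of $G = B(1,2)$ is a finite aspherical $K(G,1)$, furnishing a resolution $0 \to P_2 \to P_1 \to P_0 \to \Z \to 0$ of $\Z$ by finitely generated free $\Z[G]$-modules. Moreover, $G$ is infinite (so $H^0(G;\Z[G]) = 0$) and one-ended (by Stallings, being a solvable group that does not split over a finite subgroup), so $H^1(G;\Z[G]) = 0$. Dualising the above resolution and using these vanishings yields an exact sequence $0 \to P_0^* \to P_1^* \to P_2^* \to D \to 0$ by finitely generated free $\Z[G]$-modules. Applying $\Hom_{\Z[G]}(-,\Z[G])$ a second time and using $P_i^{**} \cong P_i$ recovers the complex $P_2 \to P_1 \to P_0$, whose degree-$1$ cohomology computes $\Ext^1_{\Z[G]}(D,\Z[G])$ and vanishes by the exactness of the original resolution.

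The main obstacle I anticipate is the invocation of classical but non-trivial properties of $G = B(1,2)$ underpinning Part (2), namely the asphericity of its presentation $2$-complex (so that $G$ admits a finite $2$-dimensional $K(G,1)$) and its one-endedness; these will need to be quoted from the literature on Baumslag-Solitar groups. Careful bookkeeping of the involution $t \leftrightarrow t^{-1}$ is also required in the Blanchfield-duality computation in Part (1) to match $P_D$ with $\overline{H_1(M_K;\LZ)/P_D}$ rather than with the quotient itself.
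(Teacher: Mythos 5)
Your proposal is correct, but it takes a genuinely different route from the paper's, especially in part (2). For part (1), the paper never invokes the lagrangian property: it extracts the short exact sequence from the long exact sequence of $(N_D,M_K)$ with $\LZ$-coefficients (using $H_1(N_D,M_K;\LZ)=0$ from the ribbon condition and $H_1(N_D;\LZ)\cong G^{(1)}/G^{(2)}$), and then identifies $P_D \cong H_2(N_D,M_K;\LZ) \cong H^2(N_D;\LZ) \cong \overline{\Ext^1_{\LZ}(H_1(N_D;\LZ),\LZ)} \cong \Z[\tmfrac{1}{2}]$ directly by Poincar\'e duality and the UCSS. You instead quote $P_D=P_D^\perp$ for the \emph{integral} Blanchfield pairing; together with nonsingularity of $\Bl(K)$ this does give $P_D \cong \overline{\Hom_{\LZ}(H_1(M_K;\LZ)/P_D,\Q(t)/\LZ)} \cong \overline{H_1(M_K;\LZ)/P_D}$ as you claim, so your deduction is sound --- but note that the proof of the quoted fact (over $\LZ$ it genuinely needs the homotopy ribbon hypothesis) is precisely the duality computation the paper performs, so you should cite it from the literature rather than from the paper's introduction if you want a non-self-referential argument. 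For part (2) the paper proceeds quite differently: it uses Friedl--Teichner's Lemma~5.1 to translate the Ext condition into the vanishing of a $\Z[G]$-coefficient Blanchfield pairing, and then runs a Cochran--Orr--Teichner-style duality diagram chase using $H_1(N_D;\Z[G])=0$. Your computation --- $H_1(M_K;\Z[G]) \cong H_2(N_D,M_K;\Z[G]) \cong H^2(N_D;\Z[G]) \cong H^2(G;\Z[G])$ via the asphericity of $N_D$ (Lemma~\ref{lem:BG}), duality and the collapsed UCSS, followed by killing $\Ext^1_{\Z[G]}$ of this module using the length-two free resolution furnished by the aspherical presentation complex of $B(1,2)$ together with $H^0(G;\Z[G])=0=H^1(G;\Z[G])$ --- is a valid alternative: the double-dual step does recover $P_2\to P_1\to P_0$, and exactness of the original resolution gives the vanishing. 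What it buys is a purely homological-algebraic argument that avoids Ore localisation and the noncommutative Blanchfield pairing altogether, exploiting asphericity more heavily; the price is quoting Lyndon's asphericity theorem for the one-relator presentation and one-endedness of $B(1,2)$, both classical. One bookkeeping point: duality identifies $H_1(M_K;\Z[G])$ with the involuted module $\overline{H^2(G;\Z[G])}$, so you should either conjugate your resolution or observe that $\Ext^1_{\Z[G]}(\overline{M},\Z[G]) \cong \overline{\Ext^1_{\Z[G]}(M,\Z[G])}$; this does not affect the vanishing.
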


\begin{proof}
Using Poincar\'e duality, the UCSS, and the fact that $D$ is homotopy ribbon, we see that $H_2(N_D;\Z[t^{\pm 1}])=0$.
Combining this with a glance at the long exact sequence of the pair~$(N_D,M_K)$ with $\Z[t^{\pm 1}]$ coefficients shows that
$$P_D=\operatorname{im}(H_2(N_D,M_K;\Z[t^{\pm 1}]) \to H_1(M_K;\Z[t^{\pm 1}])) \cong H_2(N_D,M_K;\Z[t^{\pm 1}]) .$$
Next, observe that $H_1(N_D;\Z[t^{\pm 1}]) \cong \overline{\Z[\tmfrac{1}{2}]}$ and $H_1(N_D,M_K;\Z[t^{\pm 1}])=0:$
for the absolute homology module, use that $H_1(N_D;\Z[t^{\pm 1}])=G^{(1)}/G^{(2)}=\overline{\Z[\tmfrac{1}{2}]}$ (we fix our choice of $p(t)$ in the convention from the start of the section so that this equation holds). For the relative homology module, use that $D$ is homotopy ribbon.
The long exact sequence of the pair~$(N_D,M_K)$ with~$\Z[t^{\pm 1}]$ coefficients now gives rise to the short exact sequence displayed in~\eqref{eq:NecessaryCondition}.

In order to conclude the proof of the first item, it remains to argue that~$P_D$ is isomorphic to $\Z[\tmfrac{1}{2}]$.
First, note that $\overline{\Hom(H_2(N_D;\Z[t^{\pm 1}]),\Z[t^{\pm 1}])}=0$: indeed, we argued that~$H_2(N_D;\Z[t^{\pm 1}])=0$) and~$\overline{\Ext^2_{\Z[t^{\pm 1}]}(\Z,\Z[t^{\pm 1}])}=0$ (using for instance group cohomology).
We then combine these facts with Poincar\'e duality, the UCSS, and the fact that~$H_1(N_D;\Z[t^{\pm 1}])=\overline{\Z[\tmfrac{1}{2}]}$, to complete the proof of the first item:
\begin{align*}
P_D &\cong H_2(N_D,M_K;\Z[t^{\pm 1}]) \cong H^2(N_D;\Z[t^{\pm 1}]) \cong \overline{\Ext^1_{\Z[t^{\pm 1}]}(H_1(N_D;\Z[t^{\pm 1}]),\Z[t^{\pm 1}])} \\ &\cong \overline{\Ext^1_{\Z[t^{\pm 1}]}(\overline{\Z[\tmfrac{1}{2}]},\Z[t^{\pm 1}])} \cong \Z[\tmfrac{1}{2}].
\end{align*}

Now we establish the second item of the proposition.
According to Friedl-Teichner~\cite[Lemma~5.1]{FriedlTeichner}, the Ext condition displayed in~\eqref{eq:ExtCondition} is equivalent to the vanishing of a $\Z[G]$ coefficient Blanchfield form $\Bl_G^K \colon H_1(M_K;\Z[G]) \times H_1(M_K;\Z[G]) \to Q(G)/\Z[G]$, where $Q(G)$ is the Ore localisation of  $\Z[G]$.
Using the arguments of~\cite[pages 461-462]{CochranOrrTeichner}, one can establish the existence of a Blanchfield-type pairing
\[\Bl_G^D \colon H_2(N_D,M_K;\Z[G]) \times H_1(N_D;\Z[G]) \to Q(G)/\Z[G].\]
Essentially, one uses that $H_*(N_D;Q(G))=0$, and argues that the appropriate Bockstein homomorphism is an isomorphism.
Using $A^{\wedge}$ to denote $\Hom_{\Z[G]}(A,Q(G)/\Z[G])$, the same arguments as in~\cite[pages 461-462]{CochranOrrTeichner} then show that the following diagram commutes:
\begin{equation}
\label{eq:DiagramUniqueExist}
\xymatrix{H_2(N_D,M_K;\Z[G]) \ar[r]^-{\partial} \ar[d]^{\Bl_G^D} & H_1(M_K;\Z[G]) \ar[r]^-{j} \ar[d]^{\Bl_G^K} & H_1(N_D;\Z[G])  \ar[d]^{\Bl_G^D} \\
  H_1(N_D;\Z[G])^{\wedge} \ar[r]^-{j^{\wedge}} & H_1(M_K;\Z[G])^{\wedge} \ar[r]^-{\partial^{\wedge}} & H_2(N_D,M_K;\Z[G])^{\wedge}.}
  \end{equation}
In~\eqref{eq:DiagramUniqueExist}, the vertical maps indicate the adjoints to the aforementioned Blanchfield pairings.
  Now~$\pi_1(N_D) \cong G$ implies that $H_1(N_D;\Z[G]) =0$.
A quick diagram chase then shows that~$\Bl_G^K=0$: given $x,y \in H_1(M_K;\Z[G])$, by exactness, and since $H_1(N_D;\Z[G]) =0$, there is an $u \in H_2(N_D,M_K;\Z[G])$ with $\partial u=x$; the commutativity of the diagram displayed in~\eqref{eq:DiagramUniqueExist} then gives
  \[\Bl_G^K(x)(y) = \Bl_G^K(\partial u)(y) = j^{\wedge}\Bl_G^D(u) = j^{\wedge}(0) = 0\]
This completes the proof that $\operatorname{Ext}_{\Z[G]}^1(H_1(M_K;\Z[G]),\Z[G])=~0$.
\end{proof}

Using the short exact sequence from Proposition~\ref{prop:necessary-condition}, we deduce the possible isomorphism classes for the Alexander module of a $G$-homotopy ribbon knot.

\begin{lemma}
\label{lem:ExtensionComputation}
Let $M$ be a $\LZ$-module, and let $P \subset M$ be a submodule that is isomorphic to one of $\Z[t^{\pm 1}]/(t-2)$ or~$\Z[t^{\pm 1}]/(2t-1)$ and fits into a short exact sequence
 \begin{equation}
0 \to P \to M \to \overline{P} \to 0.
\end{equation}
Then there are only two possible isomorphism classes of $\Z[t^{\pm 1}]$-modules for the central module in such an extension.  Indeed, $\Ext^1_{\Z[t^{\pm 1}]}(\overline{P},P) \cong \Z_3,$ the cyclic group of order $3$,
where
\begin{enumerate}
\item $0 \in \Z_3$ corresponds to the split extension with $M \cong \Z[t^{\pm 1}]/(t-2) \oplus \Z[t^{\pm 1}]/(2t-1)$,
\item $\pm 1 \in \Z_3$ correspond to the cyclic module $M \cong \Z[t^{\pm 1}]/(t-2)(2t-1)$.
\end{enumerate}
In particular, if $K$ is $G$-homotopy ribbon, then its Alexander module $H_1(M_K;\LZ)$ must belong to one of these isomorphisms types, and both cases are realised.
\end{lemma}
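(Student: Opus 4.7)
The plan is a direct computation of $\Ext^1_{\LZ}(\overline P,P)$ followed by matching each of its three classes with a middle term. By the symmetry between the two possible submodule types I take $P = \LZ/(t-2)$, so that $\overline P \cong \LZ/(2t-1)$ (the case $P = \LZ/(2t-1)$ is obtained by replacing $t$ with $t^{-1}$). Applying $\Hom_{\LZ}(-,P)$ to the length-two free resolution
$$0 \to \LZ \xrightarrow{\cdot(2t-1)} \LZ \to \LZ/(2t-1) \to 0$$
identifies $\Ext^1_{\LZ}(\overline P,P)$ with the cokernel $P/(2t-1)P$. Since $t$ acts on $P = \LZ/(t-2)$ by multiplication by $2$, the element $2t-1$ acts as multiplication by $3$, and the result is $\Z/3$, as claimed.

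Next I will identify the three classes with middle terms. The zero class visibly corresponds to the split extension, whose middle term is $\LZ/(t-2) \oplus \LZ/(2t-1)$. To realise a non-trivial class, I will verify that
$$0 \to \LZ/(t-2) \xrightarrow{\cdot(2t-1)} \LZ/\bigl((t-2)(2t-1)\bigr) \to \LZ/(2t-1) \to 0$$
is short exact: injectivity of the first map uses that $(t-2)$ and $(2t-1)$ are coprime in the UFD $\LZ$, while the quotient is $\LZ/\bigl((t-2)(2t-1),(2t-1)\bigr)=\LZ/(2t-1)$. Thus the cyclic module $\LZ/((t-2)(2t-1))$ is the middle term of some non-trivial extension, say corresponding to $+1 \in \Z/3$. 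For the class $-1$, I will invoke the $\LZ$-automorphism $-\operatorname{id}_P$ of $P$: post-composition on the left acts on $\Ext^1_{\LZ}(\overline P,P)$ as multiplication by $-1$, while carrying any extension to an extension with an isomorphic middle term; hence $+1$ and $-1$ yield the same central module, and this module must be the cyclic one.

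For the final assertion, Proposition~\ref{prop:necessary-condition} attaches to any $G$-homotopy ribbon disc $D$ for $K$ a short exact sequence $0 \to P_D \to H_1(M_K;\LZ) \to \overline{P_D} \to 0$ with $P_D$ isomorphic to one of $\LZ/(t-2)$ or $\LZ/(2t-1)$, so the dichotomy above forces $H_1(M_K;\LZ) \cong M_1$ or $M_2$. Both cases are realised: representatives of each type can be extracted from the family $\{K_n\}$ analysed in Section~\ref{sec:Examples}, and also appear among the explicit $G$-homotopy ribbon knots constructed by Friedl-Teichner. The only step with any genuine subtlety is the claim that $\pm 1 \in \Z/3$ yield isomorphic middle terms; this is the main obstacle, and is handled cleanly by the automorphism argument above, since any other attempt to directly classify non-split extensions would require more extensive module bookkeeping.
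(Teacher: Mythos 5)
Your computation of $\Ext^1_{\LZ}(\overline{P},P)\cong\Z_3$ is essentially the paper's: the paper resolves $\overline{P}$ (for $P=\LZ/(2t-1)$) and identifies the cokernel with $\LZ/(t-2,2t-1)$, using the resultant to see this is $\Z_3$; your mirror-image identification $P/(2t-1)P\cong\Z[\tfrac{1}{2}]/3\Z[\tfrac{1}{2}]\cong\Z_3$ is the same computation (and is fine once you remark that $2$ is a unit mod $3$). Your handling of the middle terms is also in the same spirit, and the pushout along $-\operatorname{id}_P$ is a clean justification that the classes $+1$ and $-1$ have isomorphic middle terms, a point the paper merely asserts ("the same module, with different maps").

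The one step you assert without proof is that your explicit extension $0\to\LZ/(t-2)\xrightarrow{\cdot(2t-1)}\LZ/\bigl((t-2)(2t-1)\bigr)\to\LZ/(2t-1)\to 0$ is non-trivial. This is needed: if its class were $0$, your argument would say nothing about the middle terms of the non-zero classes, and the claimed correspondence $\pm 1\leftrightarrow$ cyclic module would be unproved. The gap is easy to fill, but it is where the actual content sits (if the resultant of $t-2$ and $2t-1$ were a unit, the cyclic module would be isomorphic to the direct sum and the enumeration would fail). Two quick fixes: (i) compute the class directly by lifting the free resolution of $\overline{P}$ through the extension; it is represented by $1\in P/(2t-1)P$, a generator of $\Z_3$; or (ii) show $\LZ/\bigl((t-2)(2t-1)\bigr)\not\cong\LZ/(t-2)\oplus\LZ/(2t-1)$, e.g.\ by reducing mod $3$, where $t-2$ and $2t-1$ both become unit multiples of $t+1$, so the direct sum needs two generators while the cyclic module remains cyclic -- this is the comparison of (second) elementary ideals that the paper invokes via its observation that $I\cap\Z=(3)$. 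Finally, for "both cases are realised" the paper points to the specific knots $K_0$ and $K_{-1}$; your appeal to the family $\{K_n\}$ is the same idea but should name representatives of each isomorphism type.
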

\begin{proof}
First, we compute the extension group $\Ext^1_{\Z[t^{\pm 1}]}(\overline{P},P)$ for $P = \Z[t^{\pm 1}]/(2t-1)$; the case~$P=\Z[t^{\pm 1}]/(t-2)$ is analogous.
We can use
\[0 \to \Z[t^{\pm 1}] \xrightarrow{t-2} \Z[t^{\pm 1}] \to \overline{P} \to 0\]
as a free $\Z[t^{\pm 1}]$-module resolution. Then we compute the abelian group:
\begin{align*}
& \Ext^1_{\Z[t^{\pm 1}]}(\overline{P},P) \cong \Ext^1_{\Z[t^{\pm 1}]}(\Z[t^{\pm 1}]/(t-2),\Z[t^{\pm 1}]/(2t-1)) \\
\cong &  \coker\big(\Hom_{\Z[t^{\pm 1}]}(\Z[t^{\pm 1}], \Z[t^{\pm 1}]/(2t-1)) \to \Hom_{\Z[t^{\pm 1}]}(\Z[t^{\pm 1}], \Z[t^{\pm 1}]/(2t-1)) \big)\\
\cong & \coker\big(\Z[t^{\pm 1}]/(2t-1) \xrightarrow{(t-2)} \Z[t^{\pm 1}]/(2t-1) \big) \cong \Z[t^{\pm 1}]/(t-2,2t-1).
\end{align*}
In this quotient, we have $t = t + (2-t)=2$, so $t^k = 2^k$.
Similarly, $t^{-1} = t^{-1} + t^{-1}(2t-1) =  2$ and so we also have $t^{-k} = 2^k$.
Therefore every element in this quotient can be expressed as a multiple of $1$.
We also note that $0 =  (2t-1) -  2(t-2) = 3$.
Moreover the resultant of $t-2$ and~$2t-1$ is~$\det \bsm 1 & 2 \\ -2 & -1 \esm =3$, so for the ideal $I :=(t-2,2t-1) \lhd \Z[t^{\pm 1}]$ we have~$I \cap \Z\langle 1 \rangle = (3) \lhd \Z$.
As a consequence, nothing more is killed in $\Ext^1_{\Z[t^{\pm 1}]}(\overline{P}_D,P_D) $ and we obtain the required result:
$$\Ext^1_{\Z[t^{\pm 1}]}(\overline{P},P) \cong \Z_3.$$
Next, we describe the extensions resulting from this computation.
The trivial element $0 \in~\Z_3$ corresponds as always to the split extension $\Z[t^{\pm 1}]/(t-2) \oplus \Z[t^{\pm 1}]/(2t-1)$.
On the other hand, the elements $\pm 1 \in \Z_3$  both correspond to the same module, namely the cyclic module~$\Z[t^{\pm 1}]/(t-2)(2t-1)$, but with different maps in the extension short exact sequence.
Here, note that the computation that $I \cap \Z = (3)$ above also implies that the split extension is not cyclic, as can be seen by comparing the second elementary ideals.

The assertion on $G$-homotopy ribbon knots now follows from the first item of Proposition~\ref{prop:necessary-condition}.
Finally, the knots~$K_0$ and~$K_{-1}$ from Example~\ref{thm:ExampleIntro} realise the two possibilities for the Alexander module. This completes the proof of the lemma.
\end{proof}

In the case of the two Alexander modules described in Lemma~\ref{lem:ExtensionComputation}, the next lemma shows that at most two submodules can arise as lagrangians induced by $G$-homotopy ribbon discs.

\begin{lemma}
\label{lem:TwoSubmodules}
The following two assertions hold.
\begin{enumerate}
\item If $P \subset M:=\LZ/(t-2)(2t-1)$ is a submodule that is abstractly $P \cong \LZ/(2t-1)$ $($resp.\ $P \cong \LZ/(t-2))$ and fits into a short exact sequence
$$ 0 \to P \to M \to \overline{P} \to 0,$$
then $P=(2t-1)M$ $($resp.\  $P=(t-2)M)$.
\item If $P \subset \LZ/(2t-1) \oplus \LZ/(t-2)$ is a submodule that is abstractly $P \cong \LZ/(2t-~1)$ $($resp.\ $P \cong \LZ/(t-2))$ and fits into a short exact sequence
$$ 0 \to P \to M \to \overline{P} \to 0,$$
then $P=\LZ/(2t-1) \oplus \lbrace 0\rbrace $ $($resp.\  $P=\lbrace 0 \rbrace \oplus \LZ/(t-2))$.
\end{enumerate}
\end{lemma}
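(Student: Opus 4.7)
The plan is to handle all four cases (two parts, each with two subcases distinguished by whether $P \cong \LZ/(2t-1)$ or $P \cong \LZ/(t-2)$) in parallel, since they differ only in the roles played by the two factors. I would first exploit that $P$ is cyclic with annihilator $(2t-1)$ (say) to force $P$ into the submodule $T := \ker\bigl((2t-1) \colon M \to M\bigr)$. I would then identify $T$ explicitly as a copy of $\LZ/(2t-1) \cong \Z[\tmfrac{1}{2}]$, so that classifying cyclic submodules of $T$ with annihilator $(2t-1)$ reduces to classifying nonzero ideals of the subring $\Z[\tmfrac{1}{2}] \subset \Q$. Finally, I would invoke the quotient condition $M/P \cong \overline{P}$ and the torsion-freeness of $\Z[\tmfrac{1}{2}]$ as an abelian group to force $P$ to be the whole of $T$.

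In the cyclic setting of part~(1) with $M = \LZ/(t-2)(2t-1)$ and $P \cong \LZ/(2t-1)$, a direct computation gives $T = (t-2)M$, and multiplication by $(t-2)$ induces an isomorphism $\LZ/(2t-1) \xrightarrow{\cong} (t-2)M$, $1 \mapsto (t-2)$. Any nonzero cyclic submodule of $(t-2)M \cong \Z[\tmfrac{1}{2}]$ is then a principal ideal, which after normalising by the units $\pm 2^k$ may be written $a\,\Z[\tmfrac{1}{2}]$ for some positive odd integer $a$. The quotient fits into
\[0 \to (t-2)M/P \to M/P \to M/(t-2)M \to 0,\]
where $M/(t-2)M \cong \Z[\tmfrac{1}{2}]$ and $(t-2)M/P \cong \Z/a\Z$ (using that $2$ is a unit modulo any odd $a$). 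Since $M/P \cong \overline{P} \cong \LZ/(t-2) \cong \Z[\tmfrac{1}{2}]$ is torsion-free, I would conclude $a=1$, hence $P = (t-2)M$.

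In the split setting of part~(2) with $M = N_1 \oplus N_2 := \LZ/(2t-1) \oplus \LZ/(t-2)$ and $P \cong \LZ/(2t-1)$, the computation of $T$ is even easier: multiplication by $(2t-1)$ on $N_2 \cong \Z[\tmfrac{1}{2}]$ (where $t$ acts as $2$) is multiplication by $3$, hence injective, so $T = N_1$. As above $P = a\,N_1 \subseteq N_1$ for some positive odd $a$, and then $M/P \cong (\Z/a\Z) \oplus \Z[\tmfrac{1}{2}]$; torsion-freeness of $M/P \cong \Z[\tmfrac{1}{2}]$ again forces $a=1$, so $P = N_1$. The two remaining subcases, where $P \cong \LZ/(t-2)$, are handled symmetrically by interchanging the roles of $(t-2)$ and $(2t-1)$. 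I anticipate no real obstacle beyond careful bookkeeping of the $(2t-1)$- and $(t-2)$-torsion submodules of each $M$ and of the two different $\LZ$-module structures on $\Z[\tmfrac{1}{2}]$ coming from $\LZ/(t-2)$ versus $\LZ/(2t-1)$.
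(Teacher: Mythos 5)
Your proof is correct and follows essentially the same route as the paper: confine $P$ to the $(2t-1)$-torsion (resp.\ $(t-2)$-torsion) submodule of $M$, then use that $M/P \cong \overline{P} \cong \Z[\tmfrac{1}{2}]$ is $\Z$-torsion free to force $P$ to be that whole submodule (the paper handles the cyclic case by noting that $(t-2)$ annihilates $M/P$, hence $(t-2)M \subseteq P$, and the split case by tensoring with $\Q$, rather than via your normalisation of ideals of $\Z[\tmfrac{1}{2}]$, but these are cosmetic differences). One remark: your conclusion $P=(t-2)M$ for $P \cong \LZ/(2t-1)$ in part~(1) is the correct one and is exactly what the paper's own proof derives; the labels $(2t-1)M$ and $(t-2)M$ in the statement of the lemma are swapped (indeed $(2t-1)M \cong \LZ/(t-2)$), a typo which does not affect the essential output, namely that the set of admissible submodules is $\lbrace (t-2)M,(2t-1)M\rbrace$ with one submodule per isomorphism type.
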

\begin{proof}
We prove the first assertion for $P \cong \LZ/(2t-1)$; the proof of the second case is identical.
Using the definition of $M:=\LZ/(t-2)(2t-1)$, we see that $P \subset (t-2)M$.
As~$M/P \cong \LZ/(t-2)$, we have $[(t-2)] = 0$ in $M/P$ and therefore $[t-2] \in P \subset M$, so that~$P \supset (t-2)M$, concluding the proof of the first assertion.

We prove the second assertion for $P \cong \LZ/(2t-1)$; the proof of the second case is identical.
We claim that $P \subset \LZ/(2t-1) \oplus \lbrace 0\rbrace$.
Since $P \cong \LZ/(2t-1)$, for $p=([p_1],[p_2]) \in P$, we have $(2t-1)([p_1],[p_2])=0$ and in particular $[(2t-1)p_2]=0$ in~$\LZ/(t-2)$.
This implies that $(2t-1)p_2 =(t-2)x$ for some $x \in \LZ$.
Since $\LZ$ is a unique factorization domain and since $(2t-1)$ and~$(t-2)$ are coprime polynomials, we deduce that~$p_2=(t-2)z$ for some $z \in \LZ$.
It follows that $[p_2]=0$ in $\LZ/(t-2)$ and therefore~$p \in \LZ/(2t-1) \oplus \lbrace 0\rbrace$, concluding the proof of the claim.

Since we also assumed that $M/P \cong \LZ/(t-2)$, the claim implies that
$$ \LZ/(t-2)=M/P=(\LZ/(2t-1))/P \oplus \LZ/(t-2).$$
Tensoring with $\Q$ and using that $\Q[t^{\pm 1}]$-modules admits primary decompositions, we deduce that $(\LZ/(2t-1))/P \otimes_\Z \Q=0.$
This implies that $(\LZ/(2t-1))/P$ is $\Z$-torsion and therefore that $M/P$ contains $\Z$-torsion.
But $M/P\cong \Z[\frac{1}{2}]$ is $\Z$-torsion free, so we deduce that $(\LZ/(2t-1))/P=0$ and consequently that $P=\LZ/(2t-1) \oplus \lbrace 0 \rbrace$ as desired.
\end{proof}

Let $K$ be an oriented knot, and let $P \subseteq H_1(M_K;\Z[t^{\pm 1}])$ be a submodule of
$H_1(M_K;\Z[t^{\pm 1}])$ which is isomorphic to either one of the two submodules $\Z[t^{\pm 1}]/(t-2)$ or $\Z[t^{\pm 1}]/(2t-1)$ and such that $H_1(M_K;\Z[t^{\pm 1}])/P \cong \overline{P}$.
In particular, $\overline{P}$ is again isomorphic to $\Z[\tmfrac{1}{2}]$, for one of the module structures.
As mentioned in the introduction, there is an associated homomorphism
\[\phi_P \colon \pi_1(M_K) \twoheadrightarrow \pi_1(M_K)/\pi_1(M_K)^{(2)} \cong \Z \ltimes H_1(M_K;\Z[t^{\pm 1}]) \twoheadrightarrow \Z \ltimes H_1(M_K;\Z[t^{\pm 1}])/P \cong G.\]
Note that if $P=P_D$ for some homotopy ribbon disc $D$, then $\phi_P$ coincides with the homomorphism induced by the inclusion $M_K \hookrightarrow N_D$.

\begin{theorem}\label{thm:characterisation}
Let $K$ be an oriented knot, and let~$\mathcal{L}$ be the set of submodules $P \subseteq H_1(M_K;\LZ)$ of the Alexander module that are isomorphic to one of~$\Z[t^{\pm 1}]/(t-2)$ or~$\Z[t^{\pm 1}]/(2t-1)$ and fit into a short exact sequence
 \begin{equation}
 \label{eq:AwesomeAssumption0}
0 \to P \to H_1(M_K;\Z[t^{\pm 1}]) \to \overline{P} \to 0.
\end{equation}
Mapping a $G$-homotopy ribbon disc to its induced lagrangian gives rise to a bijection between
\begin{itemize}
\item $G$-homotopy ribbon discs for $K$, up to topological ambient isotopy rel.\ boundary;
\item submodules $P \in \mathcal{L}$ such that, with respect to $\phi_P$,
\begin{equation*}
\tag{Ext} \label{eq:ExtCondition}
\operatorname{Ext}_{\Z[G]}^1(H_1(M_K;\Z[G]),\Z[G])=0.
\end{equation*}
\end{itemize}
Moreover, these sets have cardinality at most two.
\end{theorem}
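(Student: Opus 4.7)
The strategy is to analyse the map $D \mapsto P_D$ by checking in turn that it is well defined into the claimed target set, injective, surjective, and then to establish the cardinality bound of at most two, each step invoking results already developed in this paper and in~\cite{FriedlTeichner}. Well-definedness is immediate from Proposition~\ref{prop:necessary-condition}: item~(1) places $P_D$ in $\mathcal{L}$, while item~(2) yields the Ext condition with respect to the inclusion induced map $\pi_1(M_K) \to \pi_1(N_D) \cong G$; using Corollary~\ref{cor:FromModuleToGroup}, this inclusion induced map may be identified with $\phi_{P_D}$ under a suitable isomorphism $\pi_1(N_D) \cong G$. Injectivity then follows directly from Theorem~\ref{thm:Intro}.

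For surjectivity, given $P \in \mathcal{L}$ satisfying the Ext condition with respect to $\phi_P$, I would apply Theorem~\ref{thm:existence-conditions}(2) to the surjection $\phi_P$ itself, producing a $G$-homotopy ribbon disc $D$. The only subtle point, and the main obstacle in this step, is to verify that $D$ can be arranged so that $P_D = P$. This amounts to confirming that the disc constructed via the Friedl-Teichner procedure realises the prescribed $\phi_P$ as its inclusion induced map on fundamental groups (up to the identification $\pi_1(N_D) \cong G$). Granting this, Lemma~\ref{lem:AlexModule} identifies $P_D$ with the kernel of the induced map on Alexander modules, which is $P$ by the very definition of $\phi_P$.

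Finally, for the cardinality bound, observe that if $\mathcal{L}$ is empty the bound is vacuous, so assume $\mathcal{L}$ contains some $P$. Then the short exact sequence~\eqref{eq:AwesomeAssumption0} combined with Lemma~\ref{lem:ExtensionComputation} forces $H_1(M_K;\LZ)$ to be isomorphic to either $\LZ/(t-2)(2t-1)$ or $\LZ/(t-2) \oplus \LZ/(2t-1)$. Lemma~\ref{lem:TwoSubmodules} then explicitly enumerates the members of $\mathcal{L}$ inside each of these two modules, yielding $|\mathcal{L}| \leq 2$; since the set of $G$-homotopy ribbon discs modulo isotopy injects into $\mathcal{L}$ via $D \mapsto P_D$, the same bound holds for the source of the bijection.
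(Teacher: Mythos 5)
Your proposal is correct and follows essentially the same route as the paper: well-definedness via Proposition~\ref{prop:necessary-condition}, injectivity via Theorem~\ref{thm:Intro}, surjectivity via Theorem~\ref{thm:existence-conditions}(2), and the cardinality bound via Lemmas~\ref{lem:ExtensionComputation} and~\ref{lem:TwoSubmodules}. The subtlety you flag in the surjectivity step is resolved exactly as you suggest: Friedl--Teichner's theorem produces a disc whose exterior realises the prescribed epimorphism $\phi_P$ on fundamental groups, so its induced lagrangian is $\ker(\phi_P|) = P$.
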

\begin{proof}
First we show that assigning to a slice disc its induced lagrangian determines a map from the first set to the second set in the statement of the theorem.
 Let $D$ be a $G$-homotopy ribbon disc for $K$. Let  $P=P_D$ be the induced lagrangian.
   In this case, up to an isomorphism of $G$, the map $\phi_P$ coincides with the inclusion induced map~$\pi_1(M_K) \twoheadrightarrow \pi_1(N_D) = G$.
   As a consequence, the first item of Proposition~\ref{prop:necessary-condition} ensures that the lagrangian $P=P_D$ belongs to~$\mathcal{L}$,
while the second item of Proposition~\ref{prop:necessary-condition} guarantees that~$\operatorname{Ext}_{\Z[G]}^1(H_1(M_K;\Z[G]),\Z[G])=~0$.  Therefore the assignment determines a map from the first to the second step as asserted.

Next, by Theorem~\ref{thm:Intro}, $D$ is determined up to topological ambient isotopy rel.\ boundary by the induced lagrangian $P=P_D$. It follows that the assignment is injective.

Now we prove surjectivity.
Given a submodule $P \in \mathcal{L}$, we obtain the surjective homomorphism $\phi_P \colon \pi_1(M_K) \twoheadrightarrow G$.
Since, with respect to $\phi_P$, we assumed that the Ext condition $\operatorname{Ext}_{\Z[G]}^1(H_1(M_K;\Z[G]),\Z[G])=0$ holds, the second part of Theorem~\ref{thm:existence-conditions} (which is \cite[Theorem~1.3]{FriedlTeichner}) ensures the existence of a $G$-homotopy ribbon disc $D$ for~$K$ with $P=P_D$.
This establishes that the assignment is a bijection.
Finally, Lemma~\ref{lem:TwoSubmodules} shows that if $\mathcal{L}$ is nonempty then it contains precisely two elements $P,\overline{P}$.
It follows that $K$ has at most two $G$-homotopy ribbon discs up to topological ambient isotopy rel.\ boundary.
This completes the proof of Theorem~\ref{thm:characterisation}.
\end{proof}

\section{Examples}
\label{sec:Examples}

Throughout this section, we set~$G:=\Z \ltimes \Z[\tmfrac{1}{2}]$.
Given $n \in \Z$, consider the knot~$K_{n}$ obtained by adding~$n$ full twists in the left band of the~$9_{46}$ knot as on the left hand side of Figure~\ref{fig:Kn} below.
The goal of this section is to use Theorem~\ref{thm:Intro} to study the $G$-homotopy ribbon discs of $K_{n}$.
\medbreak
\begin{figure}[!htb]
\centering
\captionsetup[subfigure]{}
\begin{subfigure}[t]{.30\textwidth}
\labellist
    \small
    \pinlabel {$n$} at 105 480
    \endlabellist
\includegraphics[scale=0.2]{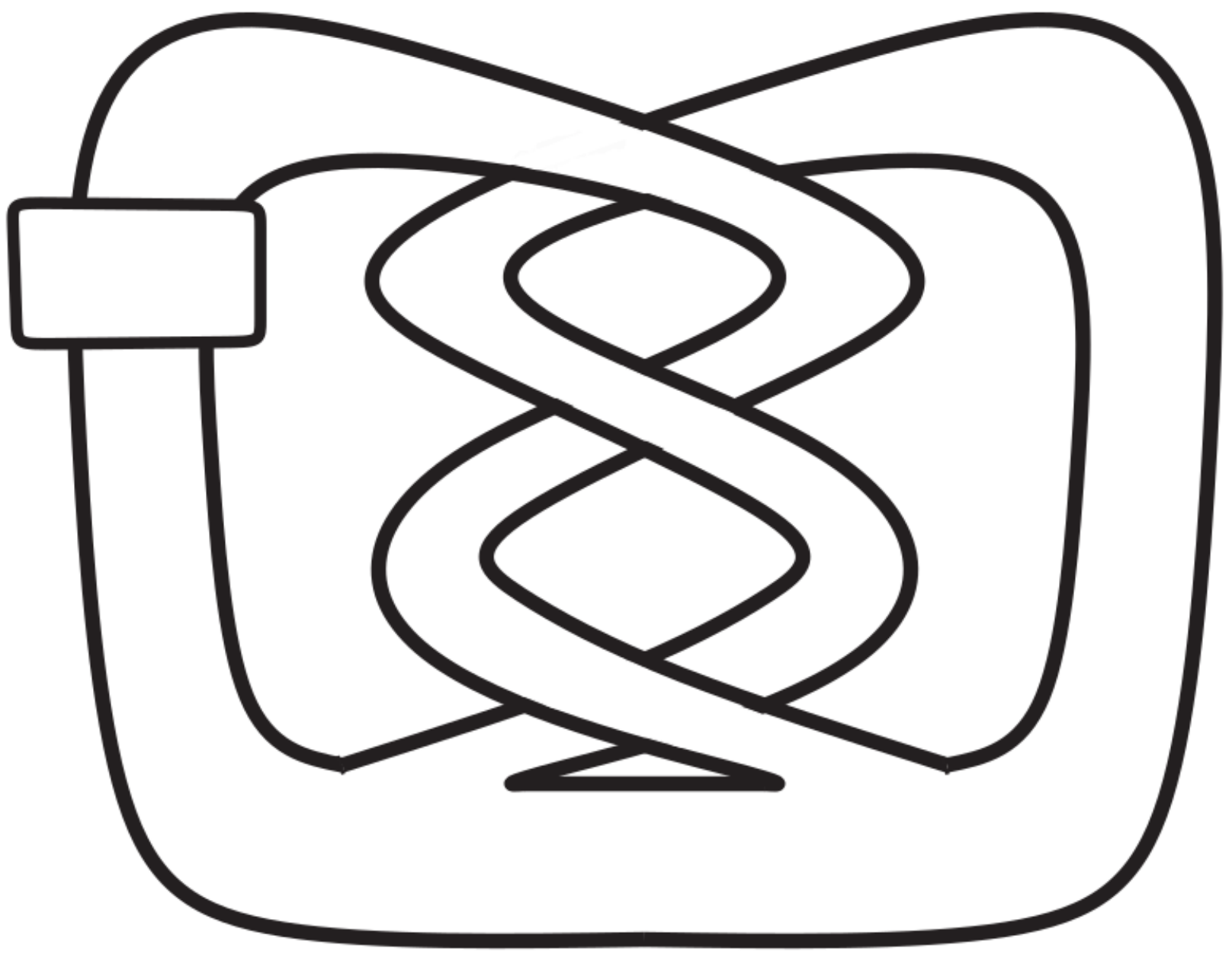}
\end{subfigure}
 \hspace{2.5cm}
\begin{subfigure}[t]{.30\textwidth}
\labellist
    \small
    \pinlabel {$n$} at 103 476
    \pinlabel {$\alpha$} at 0 260
    \pinlabel {$\beta$} at 780 260
    \pinlabel {$a$} at 140 120
    \pinlabel {$b$} at 660 120
    \endlabellist
\includegraphics[scale=0.2]{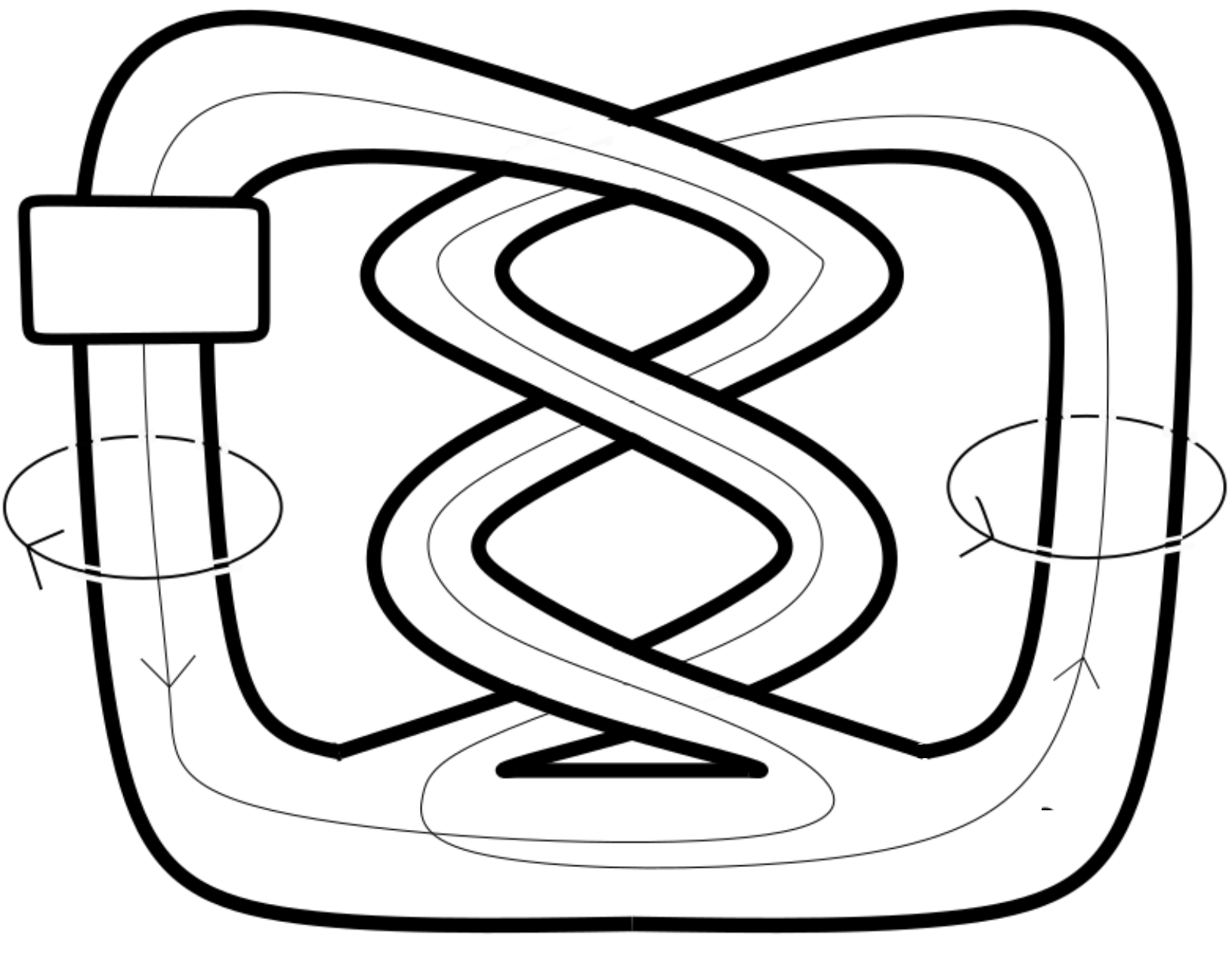}
\end{subfigure}
\caption{On the left: the knot~$K_{n}$; on the right: a Seifert surface $F_{n}$ for~$K_{n}$ as well as (oriented curves representing) generators $a,b$ of $H_1(F_{n};\Z)$ and their Alexander dual curves $\alpha,\beta$.}
\label{fig:Kn}
\end{figure}

Let~$F:=F_{n}$ be the obvious Seifert surface for~$K:=K_{n}$ depicted on the right hand side of Figure~\ref{fig:Kn}.
This figure also shows simple closed curves~$\alpha,\beta \subset S^3 \setminus F$  Alexander dual to generators $a,b$ of~$H_1(F;\Z)$, which are also shown.
These loops $\alpha$ and $\beta$ (or more precisely their lifts to the infinite cyclic cover of~$M_K$) generate~$H_1(M_K;\Z[t^{\pm 1}])$ as a~$\Z[t^{\pm 1}]$-module.

\subsection{The case that $n$ is a multiple of $3$}\label{subsection:n-mult-of-3}

Now we restrict to the case that $n=3k$ for some $k \in \Z$.  In this case we are able to classify the $G$-homotopy ribbon discs for $K_{3k}$.

We write homology classes without brackets and we set~$\beta':=k\alpha+\beta$ so that a Seifert matrix computation yields
\[
H_1(X_K;\Z[t^{\pm 1}])=
\Z[t^{\pm 1}] \alpha /(t-2)\alpha \oplus \Z[t^{\pm 1}]\beta'/(2t-1)\beta'.\]
A \emph{metabolizer}~$\mathfrak{m}$ for~$K$ is a rank~$1$ summand of~$H_1(F;\Z)\cong \Z^2$ on which the Seifert form vanishes.
Following \cite[Definition~5.4]{CochranHarveyLeidyDerivative}, a
metabolizer~$\mathfrak{m}$ \emph{represents} a lagrangian~$P$ for the rational Blanchfield pairing if the image of~$\mathfrak{m}$ under the map
$$ H_1(F;\Z) \to H_1(F;\Z) \otimes \Q \stackrel{i_*}{\twoheadrightarrow} H_1(X_K;\Q[t^{\pm 1}])~$$
spans~$P$ as a~$\Q$-vector space; here $i_*$ is obtained by fixing a lift of $F$ to the infinite cyclic cover of $X_K$.
The next lemma describes the lagrangians of~$\Bl(K)$ as well as their generators and metabolizers which represent them.

\begin{lemma}
\label{lem:PropertiesKn}
The Blanchfield pairing~$\Bl(K_{3k})$ admits precisely two distinct lagrangians~$P_1,P_2$ that are respectively generated by~$\alpha$ and~$\beta'=k \alpha+\beta$.
The lagrangian~$P_2$ is represented by the metabolizer $\Z \langle a-kb\rangle \subset H_1(F;\Z)$.
\end{lemma}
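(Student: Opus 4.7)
The plan is to identify the two obvious candidates $P_1:=\Z[t^{\pm 1}]\alpha$ and $P_2:=\Z[t^{\pm 1}]\beta'$ arising from the direct sum decomposition of the Alexander module recorded at the start of the section, verify both are lagrangians for $\Bl(K_{3k})$, rule out any further lagrangians, and finally trace the metabolizer $\Z\langle a-kb\rangle$ through the Seifert picture to identify the corresponding lagrangian.

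For the lagrangian claim (by symmetry I treat only $P_1$), the key is a coprime-annihilator argument. For $x,y\in P_1$, sesquilinearity of $\Bl$ gives $(t-2)\Bl(x,y)=0$ and $(t^{-1}-2)\Bl(x,y)=0$ in $\Q(t)/\Z[t^{\pm 1}]$; multiplying the second relation by the unit $-t$ shows that $\Bl(x,y)$ is annihilated by both $t-2$ and $2t-1$. The reduced denominator of any representative thus divides $\gcd(t-2,2t-1)$, which is a unit in the UFD $\Z[t^{\pm 1}]$, as the resultant of these two polynomials is $\pm 3$ and both are primitive. Hence $\Bl(x,y)=0$ and $P_1 \subseteq P_1^\perp$. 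To promote this to an equality I would use nonsingularity of $\Bl$: writing $y \in P_1^\perp$ as $y=y_1+y_2$ with $y_j \in P_j$, the already-established vanishing of $\Bl$ on $P_2 \times P_2$ implies $\Bl(H_1,y_2)=0$, forcing $y_2=0$.

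The delicate step, and what I expect to be the main obstacle, is the uniqueness assertion. For this I would tensor with $\Q$ and observe that $H_1(M_K;\Q[t^{\pm 1}])$ is a two-dimensional $\Q$-vector space on which $t$ acts with distinct eigenvalues $2$ and $\tfrac{1}{2}$; the only one-dimensional $t$-invariant $\Q$-subspaces are therefore the two eigenspaces $P_1 \otimes \Q$ and $P_2 \otimes \Q$. Any $\Z[t^{\pm 1}]$-lagrangian $P$ remains lagrangian after tensoring with $\Q$, so $P \otimes \Q = P_i \otimes \Q$ for some $i$; as $H_1(M_K;\Z[t^{\pm 1}])$ is $\Z$-torsion free, the saturation of $P_i$ equals $P_i$, whence $P \subseteq P_i$. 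A proper inclusion $P \subsetneq P_i$ would force $P^\perp \supseteq P_i^\perp = P_i \supsetneq P$, contradicting $P=P^\perp$; hence $P=P_i$.

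Finally, from the Seifert surface $F$ pictured in Figure~\ref{fig:Kn} one reads off the Seifert matrix $V=\bsm n & 1 \\ 2 & 0 \esm$ (up to conventions), whence $V(a-kb,a-kb)=n-3k$ vanishes precisely when $n=3k$, confirming that $\Z\langle a-kb\rangle$ is a Seifert metabolizer. To determine the induced lagrangian I would invoke the standard identification $H_1(S^3 \sm F;\Z) \cong H_1(F;\Z)^*$ via linking numbers, which gives the formula $i_*(a_i) = \sum_j V_{ij}\alpha_j$ once a lift of $F^+$ to the infinite cyclic cover is fixed. This yields $i_*(a)=n\alpha+\beta$ and $i_*(b)=2\alpha$; substituting $\beta=\beta'-k\alpha$ and $n=3k$ gives $i_*(a-kb)=\beta'$, a nonzero element of the one-dimensional $\Q$-vector space $P_2 \otimes \Q$. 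Hence $\Z\langle a-kb\rangle$ represents $P_2$, completing the proof.
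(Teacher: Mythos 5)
Your proposal is correct, but it is genuinely more self-contained than the paper's argument. For the classification of the lagrangians, the paper simply cites the (unpublished) clarification of the Friedl--Teichner erratum, whereas you prove it directly: isotropy of $P_1=\langle\alpha\rangle$ and $P_2=\langle\beta'\rangle$ via the coprimality of $t-2$ and $2t-1$ in $\Z[t^{\pm 1}]$, maximality from nondegeneracy of $\Bl$ together with the direct sum decomposition, and uniqueness by passing to $\Q[t^{\pm1}]$, where $t$ acts with distinct eigenvalues $2$ and $\tfrac{1}{2}$. This buys independence from an unrefereed reference at the cost of invoking standard facts (nonsingularity of the Blanchfield pairing over $\Z[t^{\pm1}]$ and $\Q[t^{\pm1}]$), and two steps deserve slight tightening: rather than asserting that a $\Z[t^{\pm1}]$-lagrangian ``remains lagrangian'' over $\Q$, all you need (and can easily justify) is that $P\otimes\Q$ is a nonzero, proper, isotropic, $t$-invariant subspace, hence an eigenspace; and the saturation step really rests on $H_1(M_K;\Z[t^{\pm1}])/P_i\cong\Z[\tfrac{1}{2}]$ being $\Z$-torsion free (immediate from the decomposition), not merely on torsion-freeness of the ambient module. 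For the metabolizer assertion your computation coincides with the paper's: both push $a-kb$ through the map $a\mapsto 3k\alpha+\beta$, $b\mapsto 2\alpha$ determined by the Seifert matrix and land on $k\alpha+\beta=\beta'$; the paper frames this through Lemma~5.5 of Cochran--Harvey--Leidy, while you verify the paper's definition of ``represents'' directly, which is equivalent here, and your conclusion is even insensitive to the transpose ambiguity in the Seifert convention, since either choice sends $a-kb$ to a nonzero multiple of $\beta'$.
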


\begin{proof}
The description of the lagrangians for $\Bl(K_{3k})$ and their generators can be found in~\cite[p.~4--5]{FriedlTeichnerErratum} (the unpublished clarification of the published erratum to \cite{FriedlTeichner}).
To prove the last statement, we use Cochran, Harvey and Leidy's constructive proof of the fact that every lagrangian is represented by a metabolizer~\cite[Lemma~5.5]{CochranHarveyLeidyDerivative}.
We start from the lagrangian~$P_2=\langle k \alpha +\beta \rangle$, viewed as a $1$-dimensional~$\Q$-vector subspace of the rational Alexander module~$\mathcal{A}_0(K):=H_1(X_K;\Q[t^{\pm 1}]) \cong \Q^2$.
In the notation of~\cite{CochranHarveyLeidyDerivative}, the element~$a_1:=a-kb$ maps to $\gamma_1:=k\alpha +\beta$ under the inclusion induced map
\[H_1(F;\Z) = H_1(F \times \{1\};\Z) \to H_1(S^3 \sm (F \times (-1,1));\Z),\]
which with respect to the bases $\{a,b\}$ and $\{\alpha,\beta\}$ respectively is represented by the Seifert form $\begin{pmatrix} 3k & 2 \\ 1 & 0 \end{pmatrix}$.
Cochran, Harvey and Leidy then prove that~$\lbrace a_1 \rbrace$ spans~$P_2$ in the rational vector space $\mathcal{A}_0(K)$~\cite[p.760-761]{CochranHarveyLeidyDerivative}.
This concludes the proof of the lemma.
\end{proof}

Although we do not require this fact, observe that the same argument as in the proof of Lemma~\ref{lem:PropertiesKn} shows that the lagrangian $P=P_1=\langle \alpha \rangle$ is represented by the metabolizer $\Z \langle b \rangle$.

The next result provides an application of Theorem~\ref{thm:Intro}.

\begin{theorem}
\label{thm:Kn}
Set $G:=\Z \ltimes \Z[\tmfrac{1}{2}]$.
Up to ambient isotopy rel.\ boundary, the knot~$K_{3k}$ admits
\begin{enumerate}
\item precisely two distinct~$G$-homotopy ribbon discs if~$k=0,-1$;
\item a unique~$G$-homotopy ribbon disc if~$k \neq 0,-1.$
\end{enumerate}
\end{theorem}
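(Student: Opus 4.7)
By Theorem~\ref{thm:characterisation}, up to ambient isotopy rel.\ boundary the $G$-homotopy ribbon discs for $K_{3k}$ are in bijection with those $P \in \mathcal{L}$ that satisfy the Ext condition. Lemma~\ref{lem:PropertiesKn} identifies $\mathcal{L} = \{P_1, P_2\}$ with $P_1 = \langle \alpha \rangle$ and $P_2 = \langle k\alpha+\beta\rangle$, so at most two isotopy classes of $G$-homotopy ribbon discs exist for each $k$. This already yields the upper bound in both items, and reduces the theorem to determining, for each $k$, which of $P_1, P_2$ is actually induced by a $G$-homotopy ribbon disc.

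For item~(1), I plan to exhibit two concrete $G$-homotopy ribbon discs for each of $K_0$ and $K_{-3}$ (the cases $k=0$ and $k=-1$), arising from two visibly different ribbon band moves on the diagram of Figure~\ref{fig:Kn}, and then verify that they induce distinct lagrangians. Verifying that one of the discs induces $P_1$ and the other $P_2$ reduces to a Seifert-surface computation: one pushes $F_{3k}$ into $D^4$ along the chosen band and traces the image of the metabolizers $\Z\langle b\rangle$ (representing $P_1$) and $\Z\langle a - kb\rangle$ (representing $P_2$, Lemma~\ref{lem:PropertiesKn}) into the Alexander module. Both discs are classical for $K_0 = 9_{46}$; for $K_{-3}$ the same picture applies once the $-3$-twist box is moved so that a second ribbon band becomes visible.

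For item~(2), the right-band construction still yields a $G$-homotopy ribbon disc for every $K_{3k}$ inducing $P_1$, so what must be shown is that $P_2$ is \emph{not} induced by any $G$-homotopy ribbon disc when $k \neq 0, -1$. The tool is the Cochran-Harvey-Leidy obstruction~\cite{CochranHarveyLeidyDerivative}: if a slice disc induced $P_2$, then the representing metabolizing curve $a - kb \subset S^3$ from Lemma~\ref{lem:PropertiesKn}, viewed as a knot, would have to satisfy an integral bound on its Levine-Tristram signature function $\sigma_{a-kb}$, the bound depending only on the knot type of $K_{3k}$. My plan is to compute or uniformly estimate $\sigma_{a-kb}$ and show that this bound is violated for every $k \notin \{0, -1\}$.

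The most delicate step is the signature estimate. For $k = 0$ and $k = -1$ the curve $a - kb$ simplifies to $a$ and $a + b$ respectively, both of which can be checked to be unknotted in $S^3$, so their Levine-Tristram signatures vanish identically and the CHL obstruction fails, consistent with both lagrangians being realised. For all other $k$, the curve $a - kb$ winds through the $3k$-twist box and one must recognise it as a concrete $k$-parametrised family of knots (for instance, as a cable or twist knot whose Seifert matrix is immediate from Figure~\ref{fig:Kn}) whose Levine-Tristram signatures can be bounded below, linearly in $|k|$, by an explicit function. Comparing this with the $k$-independent CHL upper bound then rules out $P_2$ for all $k \notin \{0, -1\}$. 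The inability to obtain such uniform signature formulas in the cases $n \not\equiv 0 \pmod{3}$ is precisely what limits the analogous analysis to the experimental Proposition~\ref{prop:other-n-intro}, and is the reason the $n = 3k$ family is singled out here.
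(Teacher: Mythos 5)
Your overall architecture (upper bound of two via the lagrangian classification, explicit band-move discs realising both lagrangians for $k=0,-1$, and a Cochran--Harvey--Leidy signature obstruction to rule out $P_2$ otherwise, with Theorem~\ref{thm:Intro} giving uniqueness) matches the paper, and your treatment of item~(1) is essentially the paper's proof: two saddle moves, a computation showing the resulting disc exteriors have group $G$ and induce $P_1$, $P_2$ respectively, and for $k=-1$ an isotopy of $F$ making the unknotted curve representing $a+b$ into a band core. (One small slip: the saddle move that works for \emph{all} $k$ and induces $P_1$ is on the \emph{left} band, the one carrying the twist box; cutting the other band does not yield an unlink, hence no evident ribbon disc, when $k\neq 0$.)

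The genuine gap is in item~(2), in how you invoke the obstruction. The CHL input is not ``an integral bound on the Levine--Tristram signatures of the metabolizing curve depending only on the knot type of $K_{3k}$'' to be compared against a linear-in-$|k|$ lower bound --- indeed, as written your plan is internally inconsistent, since a bound depending on the knot type of $K_{3k}$ is not $k$-independent. The correct statement is exact vanishing: if a slice disc induced $P_2$, then the first-order signature $\rho^1(K_{3k},\phi_{P_2})$ would be zero \cite[Theorem~4.2]{CochranHarveyLeidyDerivative}, and by \cite[Corollary~5.8]{CochranHarveyLeidyDerivative} this invariant equals $\rho^0(J)$, the integral of the Levine--Tristram signature function of a \emph{derivative} curve $J$, i.e.\ a knot embedded on the Seifert surface $F$ representing the metabolizer $\Z\langle a-kb\rangle$ of Lemma~\ref{lem:PropertiesKn}. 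So all that is required is $\rho^0(J)\neq 0$, with no quantitative growth estimate. The step you leave as a hope --- ``recognise $a-kb$ as a cable or twist knot and bound its signatures below linearly in $|k|$'' --- is precisely where the real content lies, and your proposed identification is not correct: the derivative is the closure of the explicit negative braid $\gamma_k=(\sigma_k^{-1}\cdots\sigma_1^{-1})(\sigma_1^{-1}\cdots\sigma_k^{-1})(\sigma_k^{-1}\cdots\sigma_1^{-1})$ for $k>0$ (and $J_{-k-1}$ for $k<-1$), and the point is that negative braid closures have $\sigma_\omega\geq 0$ for all $\omega$ and $\sigma_{-1}>0$, whence $\rho^0>0$. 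Without this identification (or some substitute argument for nonvanishing of $\rho^0$ of the derivative for every $k\neq 0,-1$), item~(2) is not established; with it, the linear estimates you propose are unnecessary. This is also why the case $n\equiv 1,2 \pmod 3$ is only handled experimentally: there the derivatives are not sign-definite in any evident way, not because a ``uniform signature formula'' is missing.
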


\begin{proof}
\begin{figure}[!htb]
\centering
\labellist
    \small
    \pinlabel {$3k$} at 107 477
    \pinlabel {$0$} at 35 310
    \endlabellist
\includegraphics[scale=0.2]{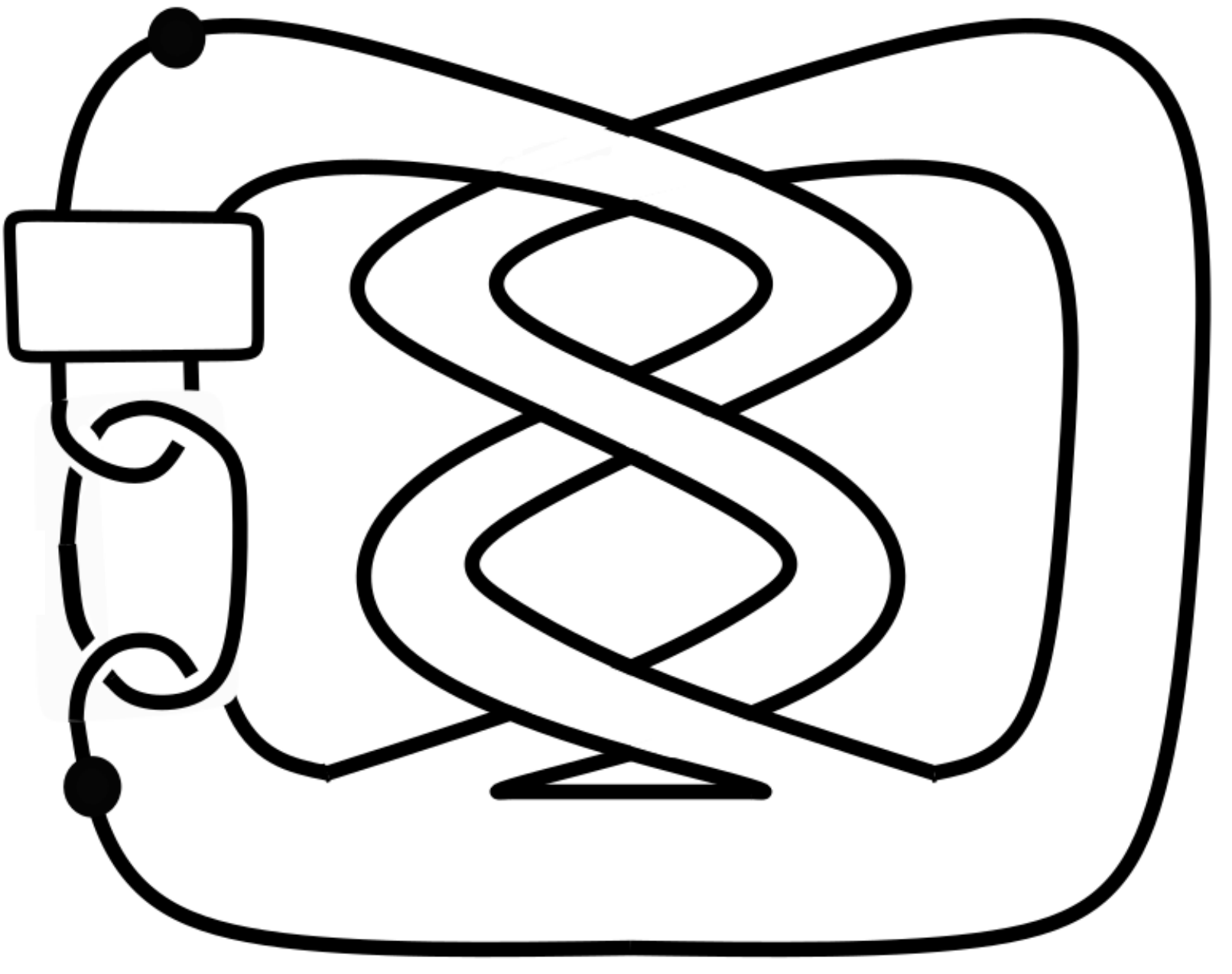}
\caption{A Kirby diagram for the exterior $N_{D_1}$ of the homotopy ribbon disc~$D_1$ obtained by performing a saddle move on the left band of $K_{3k}$.}
\label{fig:Kirby}
\end{figure}
Throughout the proof, we write~$K:=K_{3k}$.
We first assume that~$k=0$. We will give full details for $k=0$, and adapt them to the case~$k=-1$ below.
Performing a saddle move on the left (resp.\ right) band of~$K$ gives rise to a ribbon disc~$D_1$ (resp.~$D_2$).
\begin{claim}
The discs~$D_1$ and~$D_2$ are~$G$-homotopy ribbon and respectively induce the lagrangians~$P_1$ and~$P_2$ described in Lemma~\ref{lem:PropertiesKn}.
\end{claim}
\begin{proof}
We only prove this claim for~$D_1$, since~$D_2$ can be treated similarly.
We draw a Kirby diagram of~$N_{D_1}$ as in Figure~\ref{fig:Kirby}; we refer to~\cite[p.~213]{GompfStipsicz} for details on this procedure.
The group~$\pi_1(N_{D_1})$ admits a presentation with two generators, the meridians~$a,b$ of the dotted circles, and a unique relation~$bab^{-1}a^{-1}b^{-1}a^{-1}$, obtained by reading off the word described by the 2-handle.
Setting~$c:=ab$, we deduce that~$D_1$ is~$G$-ribbon:
$$ \pi_1(N_{D_1}) \cong  \langle a,b \mid \ bab^{-1}a^{-1}b^{-1}a^{-1}=1 \rangle \cong \langle a,c \ | \ a^{-1}ca=c^2 \rangle  \cong G.$$
Since ribbon discs are homotopy ribbon, we have proved that~$D_1$ is~$G$-homotopy ribbon.
Next, we show that $D_1$ induces $P_1=\langle \alpha \rangle$.
As explained at the beginning of this section, the Alexander module~$H_1(X_K;\Z[t^{\pm 1}])$ is generated by (homology classes of) the curves~$\alpha$ and~$\beta$ depicted in the right hand side of Figure~\ref{fig:Kn}.
After straightening the dotted circles in the Kirby diagram of~$N_{D_1}$, one sees that~$(\iota_{D_1})_*$ maps~$\alpha$ to zero and maps~$\beta$ to~$c$.
Since Lemma~\ref{lem:PropertiesKn} implies that~$\Bl(K)$ admits precisely two lagrangians,~$P_{D_1}$ must equal either~$P_1=\langle \alpha \rangle$ or~$P_2=\langle \beta \rangle$.
Since we established that~$\alpha$ lies in~$P_{D_1}$ but~$\beta$ does not, we deduce that~$P_{D_1}=P_1$.
This concludes the proof of the claim.
\end{proof}

Using the claim, in order to establish the result in the~$k=0$ case, it remains to show that~$D_1$ and~$D_2$ are distinct and that, up to ambient isotopy, there are no other~$G$-homotopy ribbon discs.
First, assume that~$D$ induces~$P_1$ and~$D'$ induces~$P_2$; we claim that~$D$ and~$D'$ are not ambiently isotopic rel.\ boundary.
 By means of contradiction, assume they are.
Using Lemma~\ref{lem:Prelim}, this ambient isotopy induces a rel.\ boundary homeomorphism of~$D^4$.
In particular this homeomorphism is the identity on~$X_K$.
Lifting these considerations to the infinite cyclic covers, it follows that~$P_1=P_2$. This is a contradiction and proves the claim that~$D$ and~$D'$ are not ambiently isotopic rel.\ boundary. Finally, we show that there are no other~$G$-homotopy ribbon discs than~$D_1$ and~$D_2$. If~$D$ is such disc, then Lemma~\ref{lem:PropertiesKn} implies that it must induce either~$P_1$ or~$P_2$. Without loss of generality, assume that~$D$ induces~$P_1$. By Theorem~\ref{thm:Intro}, since~$D_1$ and~$D$ induce the same lagrangian, they must be ambiently isotopic rel.\ boundary.

When $k=-1$, the lagrangian $P_2$ is represented by the metabolizer $\Z \langle a+b \rangle $, and $a+b$ is represented by the unknotted curve $J$ depicted on the left hand side of Figure~\ref{fig:K-3}.
The argument works similarly to the case $k=0$, after performing an isotopy on $F$ (resulting in the surface $F'$ depicted on the right hand side of Figure~\ref{fig:K-3}) so that~$J$ becomes the core of one the two bands of $F'$.
\begin{figure}[!htb]
\centering
\captionsetup[subfigure]{}
\begin{subfigure}[t]{.30\textwidth}
\labellist
\small
 \pinlabel {$-3$} at 69 509
    \endlabellist
    \includegraphics[scale=0.3]{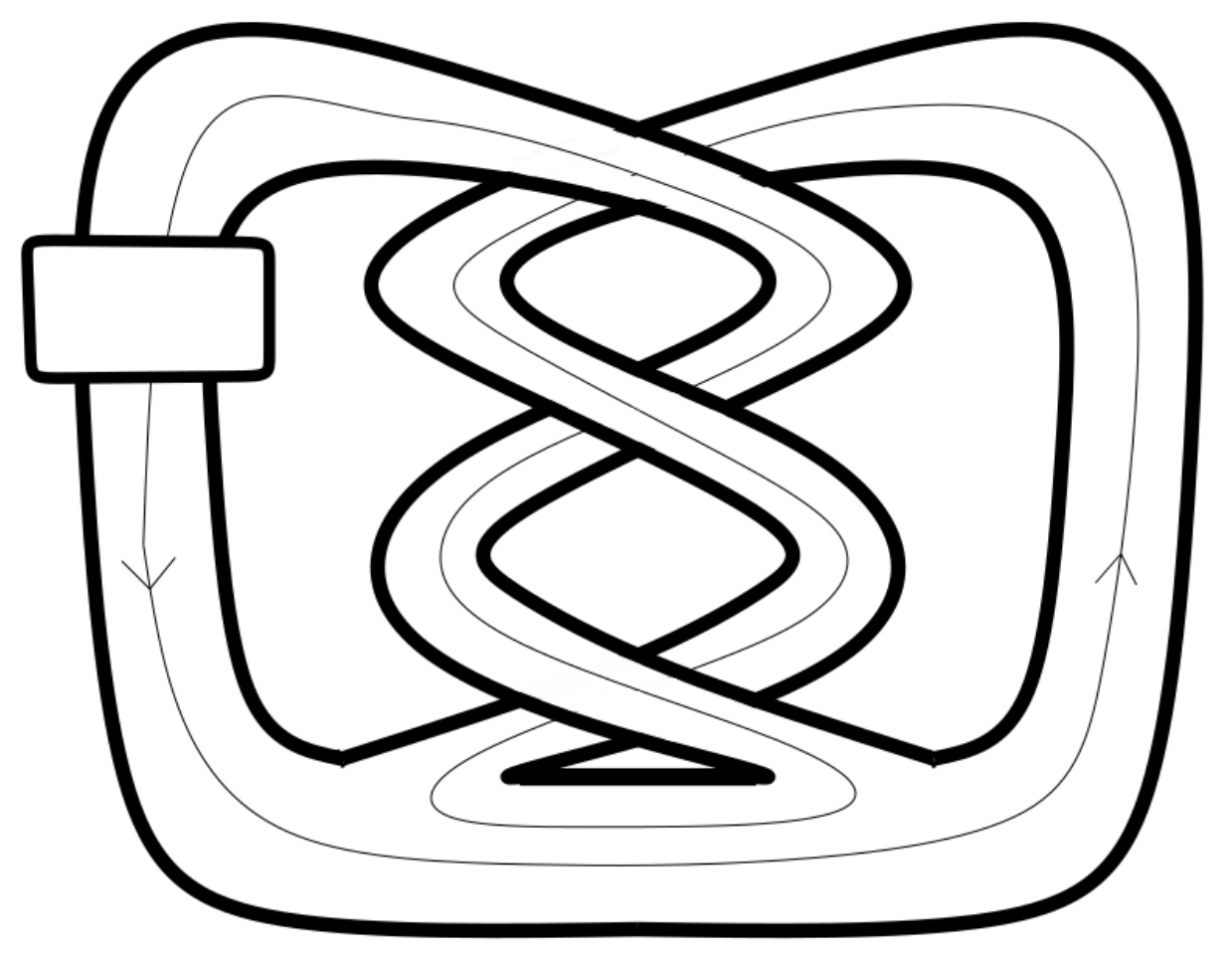}
\end{subfigure}
 \hspace{2cm}
\begin{subfigure}[t]{.30\textwidth}
\labellist
\small
 \pinlabel {$-3$} at 48 550
    \endlabellist
\includegraphics[scale=0.29]{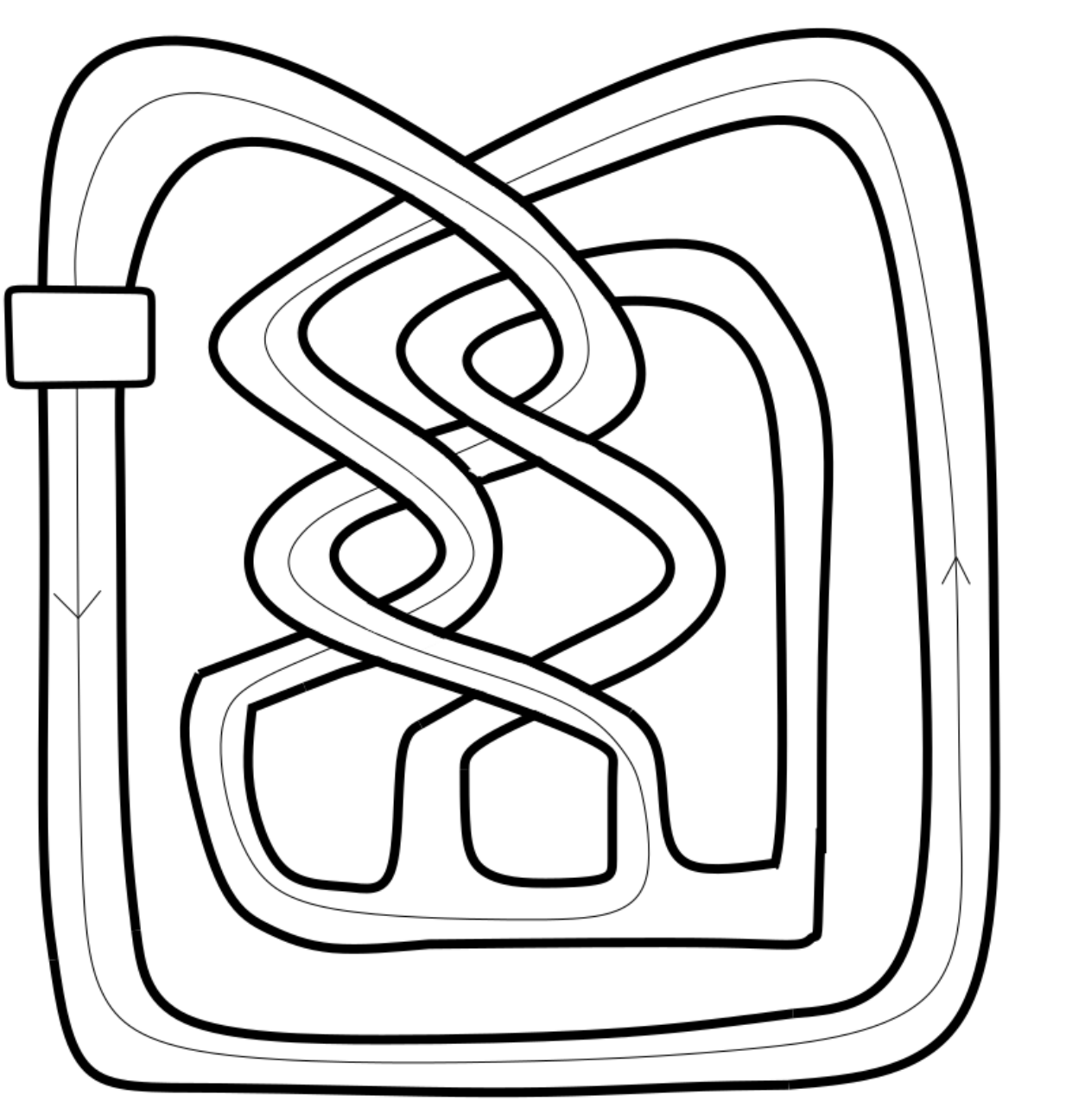}
\end{subfigure}
\caption{On the left: the surface $F$ together with the curve $J$ which represents the homology class $a+b$.
On the right: the surface $F'$ obtained from $F$ by performing an isotopy so that~$J$ becomes the core of one of the bands.}
\label{fig:K-3}
\end{figure}

Finally, we assume that~$k \neq 0,-1$.
Arguing as in the~$k=0$ case and applying Theorem~\ref{thm:Intro}, we know that up to ambient isotopy rel.\ boundary,~$K$ admits at most two~$G$-homotopy ribbon discs, corresponding to the lagrangians~$P_1$ and~$P_2$ described in Lemma~\ref{lem:PropertiesKn}.
As in the previous paragraphs, a saddle move on the left band of~$K$ produces a~$G$-homotopy ribbon disc that induces~$P_1$.
\begin{claim}
The lagrangian~$P_2=\langle k \alpha + \beta \rangle$ is not induced by any slice disc.
\end{claim}
\begin{proof}
Recall that a metabolizer~$\mathfrak{m}$ of the Seifert form represents a lagrangian~$P$ for the rational Blanchfield pairing if the image of~$\mathfrak{m}$ under the map
$$ H_1(F;\Z) \to H_1(F;\Z) \otimes \Q \twoheadrightarrow H_1(M_K;\Q[t^{\pm 1}])~$$
spans~$P$ as a~$\Q$-vector space.
Following \cite[Definition 5.1]{CochranHarveyLeidyDerivative} a \emph{derivative of~$K$ with respect to~$\mathfrak{m}$} is a knot~$J$ embedded in~$F$ that gives a basis for~$\mathfrak{m}$.
Lemma~\ref{lem:PropertiesKn} establishes that $P_2$ is represented by the metabolizer~$\mathfrak{m}:=\Z \langle a-kb \rangle \subset H_1(F;\Z)$.
Reading braids from bottom to top, for $k>0$, a derivative of~$K$ with respect to $\mathfrak{m}$ is given by the negative braid knot~$J_k=\widehat{\gamma}_k$, where $\gamma_k$ is the negative braid
$$\gamma_k=(\sigma_k^{-1}\cdots \sigma_1^{-1})(\sigma_1^{-1}\cdots \sigma_k^{-1})(\sigma_k^{-1}\cdots \sigma_1^{-1}).$$
For $k=2$, this knot is depicted in Figure~\ref{fig:Derivative}; note also that for $k=0,-1$, the derivative is unknotted, as expected.
For $k<-1$, the derivative is instead given by~$J_{-k-1}$.

\begin{figure}[!htb]
\centering
\labellist
        \small
    \pinlabel {$6$} at 75 510
\endlabellist
\includegraphics[scale=0.25]{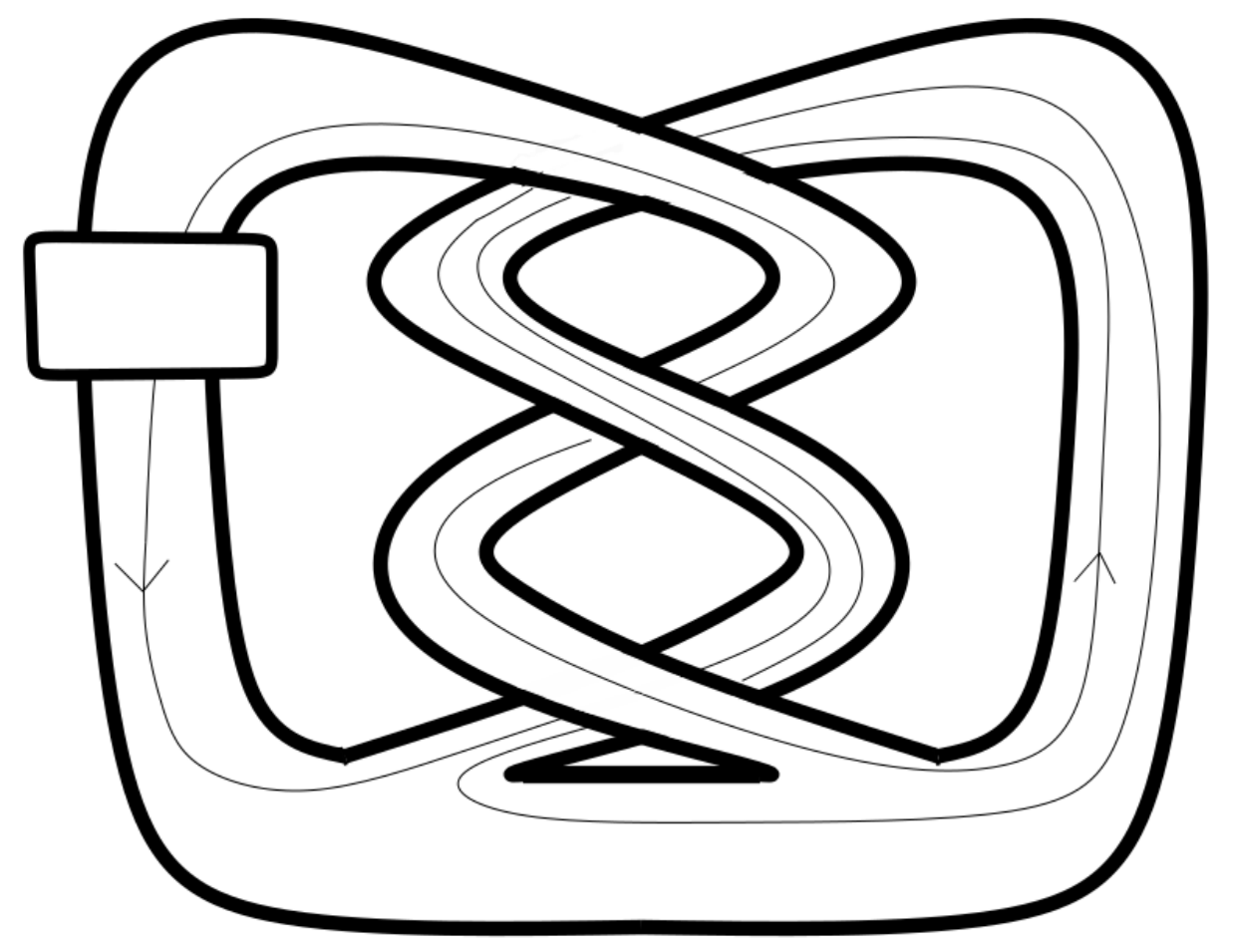}
\caption{The knot $J_2$ on the surface $F$.}

\label{fig:Derivative}
\end{figure}
Next, we consider the first order signature~$\rho^1(K,\phi_{P_2})$ associated to the lagrangian~$P_2$ of~$\Bl(K)$.
Since we need only two properties of~$\rho^1(K,\phi_{P_2})$, we omit its definition but refer the interested reader to ~\cite[Definition~4.1]{CochranHarveyLeidyDerivative} for details.
Use~$\rho^0(J_k)$ to denote the integral of the Levine-Tristram signature function~$\sigma_{J_k}(\omega)$ over~$S^1$.
Since~$J_k$ is a negative braid knot, we have~$\sigma_{J_k}(\omega) \geq 0$ for all~$\omega \in S^1$ (e.g.\ negative braid knots can be unknotted using only negative to positive crossing changes) and~$\sigma_{J_k}(-1)>0$ (see e.g.~\cite{Rudolph} or~\cite{Przytycki}).
Combining this observation with~\cite[Corollary~5.8]{CochranHarveyLeidyDerivative} implies that
\[\rho^1(K,\phi_{P_2})=\rho^0(J_k)>0.\]
To finish the proof, if~$P_2$ were induced by a slice disc~$D$, then~\cite[Theorem 4.2]{CochranHarveyLeidyDerivative} would imply that~$\rho^1(K,\phi_{P_2})=0$, a contradiction. This concludes the proof of the claim that the lagrangian~$P_2=\langle k \alpha + \beta \rangle$ is not induced by a slice disc.
\end{proof}

Summarising, when $k \neq 0,-1$, we know that~$P_1$ is induced by a slice disc $D$, but that~$P_2$ is not induced by any slice disc.
The fact that $D$ is unique up to ambient isotopy rel.\ boundary now follows by applying Theorem~\ref{thm:Intro}.
This concludes the proof of Theorem~\ref{thm:Kn}.
\end{proof}

\subsection{The cases with $n$ not a multiple of 3}\label{subsection:n-not-mult-of-3}

Now  we study the cases that $n$ is not a multiple of 3. Define $k \in \Z$ and $x \in \{1,2\}$ as the unique numbers with $n = 3k +x$.

As above, let~$F:=F_{n}$ be the obvious Seifert surface for~$K:=K_{n}$ depicted on the right hand side of Figure~\ref{fig:Kn}.
This figure also shows simple closed curves~$\alpha,\beta \subset S^3 \setminus F$ Alexander dual to generators $a,b$ of~$H_1(F;\Z)$. The loops $\alpha$ and $\beta$ generate~$H_1(M_K;\Z[t^{\pm 1}])$.
A computation with the Seifert matrix shows that
\[H_1(M_K;\Z[t^{\pm 1}]) \cong \frac{\Z[t^{\pm 1}]}{(t-2)(2t-1)}\]
is a cyclic $\Z[t^{\pm 1}]$-module generated by $k\alpha+\beta$.
Using \cite[Theorem~1.4]{FriedlPowell-Bl-computation}, we compute that the Blanchfield form is isometric to:
\begin{align*}
  \frac{\Z[t^{\pm 1}]}{(t-2)(2t-1)} \times \frac{\Z[t^{\pm 1}]}{(t-2)(2t-1)} &\to \Q(t)/\Z[t^{\pm 1}] \\
  (p,q) & \mapsto  \frac{- p x(t-1)^2 \overline{q}}{(t-2)(2t-1)}.
\end{align*}
  Contrary to the statement in \cite[p.~4--5]{FriedlTeichnerErratum} (the unpublished clarification of the published erratum to \cite{FriedlTeichner}), there are \emph{two} lagrangians for the Blanchfield form, namely the submodules
\[P_1 := (t-2)\Z[t^{\pm 1}] \text{ and } P_2:= (2t-1)\Z[t^{\pm 1}].\]
Here $P_1$ is generated by $\alpha$ and $P_2$ is generated by $n\alpha + 3\beta$.
To see that these are distinct submodules, note that if they were equal then there would exist $p,q \in \Z[t^{\pm 1}]$ such that $2t-1 = p(t-2) + q(t-2)(2t-1) = (t-2)(p+q(2t-1))$. But then multiplication of Laurent polynomials leads to addition of their widths, so $p+q(2t-1)$ is a monomial $\pm t^m$. But there is no monomial such that $2t-1 = \pm t^m(t-2)$.   It follows that $P_1$ and $P_2$ are indeed distinct lagrangian submodules.

Corresponding to these lagrangians of $\Bl(K)$ are derivative curves on $F$ representing~$b$ and~$3a-nb$ respectively. One can find these metabolizers directly by computing with the Seifert matrix~$\bp n & 2 \\ 1 & 0 \ep$.
For every $n$, as in Section~\ref{subsection:n-mult-of-3}, $b$ is represented by an unknotted, and therefore slice derivative curve, so there is an essentially unique slice disc corresponding to~$P_1$ for every~$n$.

The following proposition classifies the $G$-homotopy ribbon discs for small values of $n$.

\begin{proposition}\label{prop:other-n}
Set $G:=\Z \ltimes \Z[\tmfrac{1}{2}]$. Up to ambient isotopy rel.\ boundary,
  \begin{enumerate}
    \item the knots $K_{-1}$ and $K_{-2}$ admit precisely two distinct~$G$-homotopy ribbon discs;
     \item\label{item:part-2-prop-other-n} the knots $K_{-5}$, $K_{-4}$, $K_1$, and $K_2$ admit a unique~$G$-homotopy ribbon disc.
  \end{enumerate}
\end{proposition}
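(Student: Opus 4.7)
\medskip

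The plan follows the strategy of Theorem~\ref{thm:Kn}. As observed before the statement, for $n$ not a multiple of~$3$ the Blanchfield pairing of $K_n$ admits exactly two lagrangians $P_1 = (t-2)\Z[t^{\pm 1}]$ and $P_2 = (2t-1)\Z[t^{\pm 1}]$, and $P_1$ is always realised by a $G$-homotopy ribbon disc coming from a saddle move on the band whose core represents $b$ (which is unknotted). By Theorem~\ref{thm:Intro}, it therefore remains, for each of the six values of $n$ in the proposition, to decide whether $P_2$ is also induced by some $G$-homotopy ribbon disc.

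For part~(1), covering $n=-1$ and $n=-2$, I would construct an explicit $G$-homotopy ribbon disc inducing $P_2$. The derivative curve on $F_n$ corresponding to $P_2$ is a knot $J$ representing $3a - nb \in H_1(F_n;\Z)$, i.e.\ $3a+b$ when $n=-1$ and $3a+2b$ when $n=-2$. In each of these low-complexity cases I would exhibit $J$ on $F_n$ and, after an isotopy of $F_n$ analogous to the $k=-1$ case of Theorem~\ref{thm:Kn} (see Figure~\ref{fig:K-3}), show that $J$ can be arranged to be ribbon, so that a suitable band move along it produces a ribbon disc $D$ for $K_n$. A Kirby-calculus computation on the resulting disc exterior, exactly as in the proof of Theorem~\ref{thm:Kn}, then verifies both that $\pi_1(N_D) \cong G$ and that $P_D = P_2$. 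Combined with the disc inducing $P_1$ and Lemma~\ref{lem:Prelim}, this yields precisely two distinct $G$-homotopy ribbon discs up to ambient isotopy rel.\ boundary.

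For part~(\ref{item:part-2-prop-other-n}), covering $n\in\{-5,-4,1,2\}$, I would obstruct $P_2$ from being induced by any slice disc, following the argument used in the second part of the proof of Theorem~\ref{thm:Kn}. Let $J_n \subset F_n$ be a derivative curve representing $3a-nb$. By \cite[Corollary~5.8]{CochranHarveyLeidyDerivative}, the first-order signature satisfies $\rho^1(K_n,\phi_{P_2}) = \rho^0(J_n)$, where $\rho^0(J_n)$ is the integral of the Levine--Tristram signature function over $S^1$; and by \cite[Theorem~4.2]{CochranHarveyLeidyDerivative}, if $P_2$ were induced by a slice disc this quantity would vanish. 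It thus suffices to verify $\rho^0(J_n)\neq 0$ for each of the four values of $n$. From the embedding of $J_n$ in $F_n$ I would read off an explicit diagram, compute a Seifert matrix, and then use a computer to evaluate the Levine--Tristram signature on a sufficiently dense sample of $S^1$ and integrate. Since $\sigma_{J_n}$ is an integer-valued step function whose jumps occur only at roots of $\Delta_{J_n}$, a sufficiently fine sampling produces a rigorous value of $\rho^0(J_n)$.

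The main obstacle is practical rather than conceptual: unlike the $n=3k$ family, where the derivative knots are negative braids and a uniform positivity argument for $\rho^0$ is available, here the curves $J_n$ must be analysed one at a time and their combinatorial complexity grows quickly with $|n|$. This is precisely why only finitely many values of $n$ are treated in the proposition, and why the general case remains the conjectural statement alluded to in the introduction.
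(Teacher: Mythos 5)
Your proposal is correct and takes essentially the same route as the paper: the disc for $P_1$ is always there, for $n=-1,-2$ the derivative curves representing $3a+b$ and $3a+2b$ are in fact unknotted (slightly stronger than ``ribbon'', and exactly what makes your Kirby-calculus check that $\pi_1(N_D)\cong G$ and $P_D=P_2$ succeed, as in the $k=-1$ case of Theorem~\ref{thm:Kn}), and Theorem~\ref{thm:Intro} then gives exactly two discs. For $n\in\{-5,-4,1,2\}$ the paper likewise computes $\rho^0(J_n)\neq 0$ for the derivative curves by computer (via Seifert matrices and Levine--Tristram signatures) and invokes \cite[Corollary~5.8]{CochranHarveyLeidyDerivative} and \cite[Theorem~4.2]{CochranHarveyLeidyDerivative} to exclude a slice disc inducing $P_2$, so uniqueness follows from Theorem~\ref{thm:Intro} just as you describe.
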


\begin{proof}
As described above, there is a slice disc corresponding to $P_1$.  For $n=-1,-2$, the other derivative curve, representing $3a+b$ and $3a+2b$ respectively, is also unknotted. In these cases there is therefore also a slice disc corresponding to the lagrangian $P_2$, and so by Theorem~\ref{thm:Intro} we have precisely two distinct~$G$-homotopy ribbon discs as claimed.

For $n \in \{-5,-4,1,2\}$, we drew the derivative curves $J_n$ on $F$ for~$3a-nb \in H_1(F;\Z)$, and used a computer\footnote{We used SnapPy to obtain the PD code of the $J_n$, Sage to deduce Seifert matrices, and Mathematica to deduce that the integral of the Levine-Tristram signature is
negative for $J_1,J_2$ and
positive for $J_{-4},J_{-5}$.} to show that $\rho^0(J_n)$,
 the integral over $S^1$ of the Levine-Tristram signature function~$\sigma_{J_n}(\omega)$, is nonzero.
 Thus by~\cite[Theorem 4.2]{CochranHarveyLeidyDerivative}, as explained in the proof of Theorem~\ref{thm:Kn},
there can be no slice disc corresponding to the lagrangian $P_2$.
It follows from Theorem~\ref{thm:Intro} that there is a  unique~$G$-homotopy ribbon disc for $K_n$ with $n \in \{-5,-4,1,2\}$.
\end{proof}

As mentioned in the introduction, we conjecture that for each $n$ with $n>0$ or $n< -3$, there is a unique~$G$-homotopy ribbon disc for $K_n$.  We have been unable to establish the required lower bounds on the absolute value of the integral of the signatures for the derivative curves corresponding to the lagrangian~$P_2$. It is encouraging that for the examples we checked with a computer, our conjecture holds. For larger absolute values of $n$, the derivatives become more complicated, so it seems doubtful that their signatures become trivial.

\section{Relaxing the rel.\ boundary restriction}\label{sec:relaxing}

In this section, we consider relaxing the rel.\ boundary condition.
Note that the two $G$-homotopy ribbon discs for $9_{46}$ are isotopic as disc knots. That is, if isotopies of the knot in~$S^3$ are also permitted, then $R:= 9_{46}$ admits an essentially unique $G$-homotopy ribbon disc.
\begin{figure}[!htb]
\centering
\captionsetup[subfigure]{}
\begin{subfigure}[t]{0.3\textwidth}
\centering
\labellist
    \small
    \pinlabel {$\eta_1$} at -20 455
        \pinlabel {$\eta_2$} at 615 455
    \endlabellist
\includegraphics[scale=0.25]{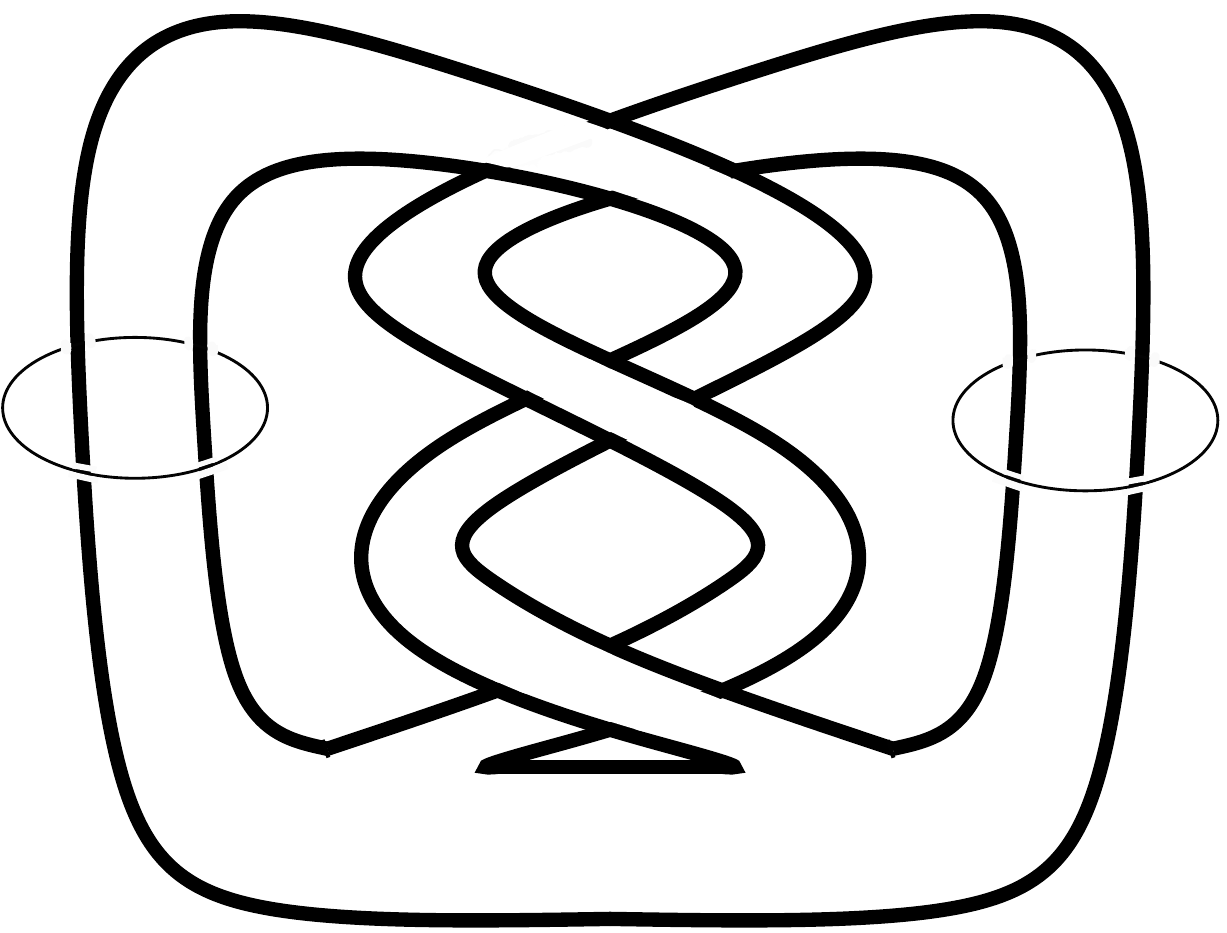}
\end{subfigure}
 \hspace{2.5cm}
\begin{subfigure}[t]{.30\textwidth}
\labellist
    \small
    \pinlabel {$J_1$} at 63 459
    \pinlabel {$J_2$} at 528 459
    \endlabellist
\includegraphics[scale=0.25]{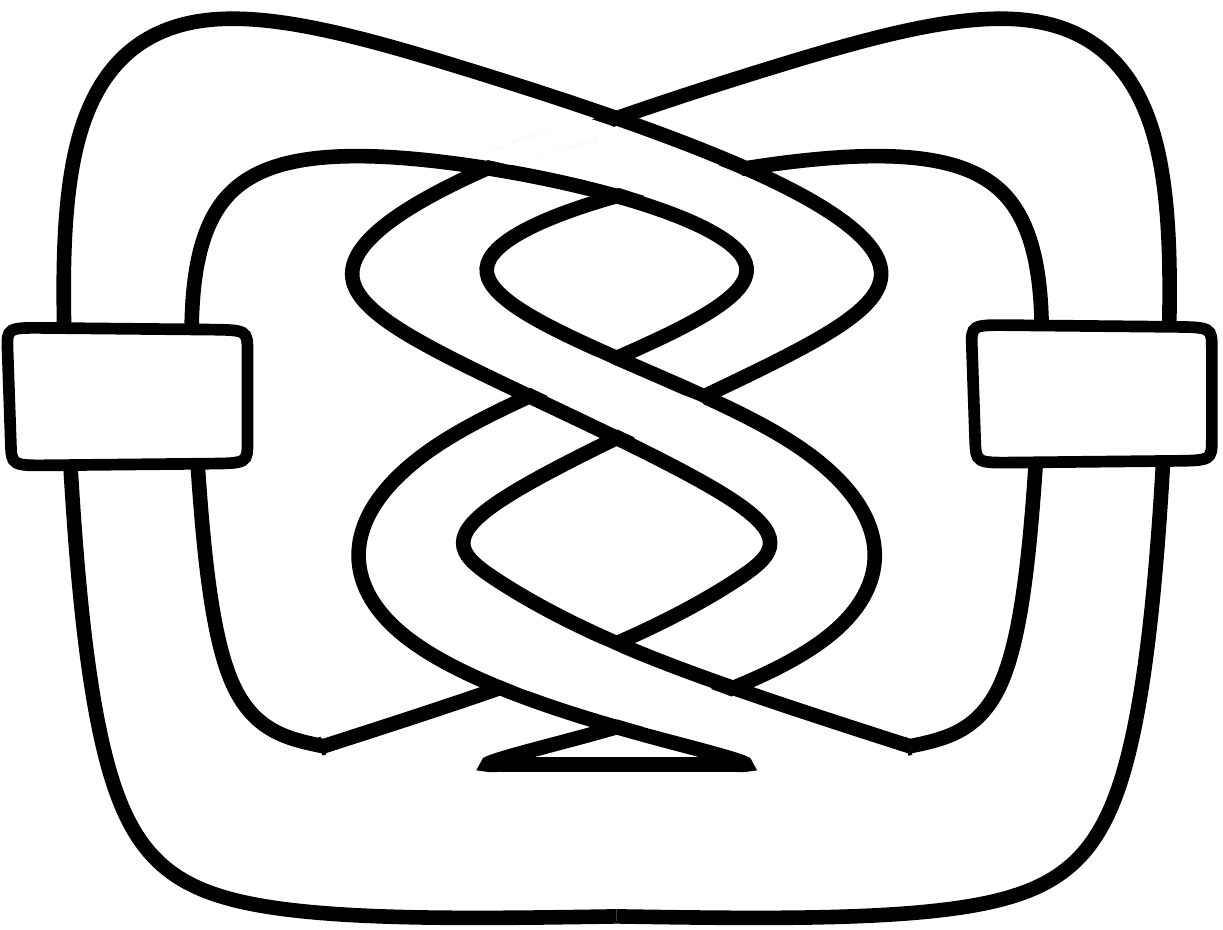}
\end{subfigure}
\caption{On the left: the knot $R:=9_{46}$ with the infections curves $\eta_1,\eta_2$; on the right: the satellite knot $K:=R(J_1,J_2)$ obtained by infecting $R$ along the curves $\eta_1,\eta_2$.}
\label{fig:946Satellite}
\end{figure}

Let $\eta_1$ and $\eta_2$ in $X_R$ be the curves shown on the left hand side of Figure~\ref{fig:946Satellite}.
Perform the satellite operation on $R$ along $\eta_1$ and $\eta_2$ with infection knots $J_1$ and $J_2$ respectively, to obtain a knot that we denote $K:= R(J_1,J_2)$ and that is depicted schematically on the right hand side of Figure~\ref{fig:946Satellite}.

The next theorem requires the existence of two hyperbolic Alexander polynomial one knots~$J_1$ and $J_2$ with exteriors that are not homeomorphic.  This is guaranteed by~\cite[Theorem~1.1]{Friedl-hyperbolic} applied to a Seifert matrix for the unknot.

\begin{theorem}\label{thm:without-rel-bdy}
Let $J_1$ and $J_2$ be two hyperbolic Alexander polynomial one knots with exteriors that are not homeomorphic. The knot $K$ shown on the right hand side of Figure~\ref{fig:946Satellite} has precisely two $G$-homotopy ribbon discs up to ambient isotopy.
\end{theorem}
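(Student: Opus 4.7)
The plan is to show that $K$ admits exactly two $G$-homotopy ribbon discs up to ambient isotopy, proceeding in three stages.

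\emph{Stage 1: construction and the rel.\ boundary classification.} Since each $J_i$ has Alexander polynomial one, it is topologically slice by Freedman, and the satellite operation along the null-homologous curves $\eta_1, \eta_2$ preserves the Alexander module and Blanchfield form; in particular $\Bl(K) \cong \Bl(R)$. I would apply the satellite disc construction to the two $G$-homotopy ribbon discs $D_1, D_2$ for $R = 9_{46}$ produced by Theorem~\ref{thm:Kn} (with $k = 0$) to obtain $G$-homotopy ribbon discs $D_1', D_2'$ for $K$ inducing the two distinct lagrangians $P_1, P_2$ of $\Bl(K)$. Theorem~\ref{thm:Intro} then shows $D_1'$ and $D_2'$ are not ambient isotopic rel.\ boundary, and Theorem~\ref{thm:characterisation-Intro} ensures they exhaust the $G$-homotopy ribbon discs for $K$ up to rel.\ boundary isotopy.

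\emph{Stage 2: JSJ analysis.} The main step is to show $D_1', D_2'$ are not ambient isotopic in general. If an ambient isotopy $(g_t)_{t \in [0,1]}$ of $D^4$ satisfies $g_1(D_1') = D_2'$, then $g_1$ restricts to a self-homeomorphism $f$ of $(S^3,K)$ and hence of the exterior $X_K$. Since $9_{46}$ is hyperbolic and $J_1, J_2$ are hyperbolic, the companion tori $T_i = \partial \nu \eta_i = \partial \nu J_i$ are JSJ tori of $X_K$; they decompose $X_K$ into the central piece (the complement of $\nu \eta_1 \cup \nu \eta_2$ in $X_R$) and the two hyperbolic pieces $X_{J_1}, X_{J_2}$. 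By uniqueness of the JSJ decomposition $f$ permutes these pieces, and the hypothesis $X_{J_1} \not\cong X_{J_2}$ rules out the swap; therefore $f$ preserves each piece setwise.

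\emph{Stage 3: descent and contradiction.} By Theorem~\ref{thm:IntroZTheorem} each $J_i$ admits an essentially unique $\Z$-homotopy ribbon disc, so the $J_i$-slice-disc pieces of $N_{D_j'}$ are canonically identified. I would use this to descend $g_1$ to a self-homeomorphism $\tilde g$ of $(D^4, R)$ with $\tilde g(D_1) = D_2$, whose restriction to $S^3$ preserves each $\eta_i$ up to isotopy in $X_R$. On the other hand, any self-homeomorphism of $(S^3, R)$ taking $D_1$ to $D_2$ must interchange the lagrangians $P_1 = \langle \alpha \rangle$ and $P_2 = \langle \beta \rangle$ in $\Bl(R)$, and since the infection curves $\eta_i$ may be taken in the classes of $\alpha, \beta$ (up to units) in the Alexander module, any such symmetry must swap $\eta_1$ and $\eta_2$ up to isotopy. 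This contradicts the preservation of each $\eta_i$ established from the JSJ analysis, completing the proof.

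The main obstacle will be the descent step of Stage~3: making precise that the 4-dimensional homeomorphism $g_1$ descends to a self-homeomorphism of $(D^4, R)$ taking $D_1$ to $D_2$ whose 3-dimensional restriction preserves each $\eta_i$. Executing this requires combining JSJ rigidity at the 3-manifold level with the uniqueness statement of Theorem~\ref{thm:IntroZTheorem} at the 4-manifold level, in order to canonically recognise the $J_i$-slice-disc pieces inside $N_{D_1'}$ and $N_{D_2'}$ and match them up under $g_1$.
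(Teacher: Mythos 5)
Your Stages 1 and 2 follow essentially the paper's route (construct two discs realising the two distinct lagrangians of $\Bl(K)$, then use the JSJ decomposition of $X_K$ together with $X_{J_1}\not\cong X_{J_2}$ to constrain any self-homeomorphism), but Stage 3 contains a genuine gap — the one you flag yourself. There is no mechanism to ``descend'' the ambient homeomorphism $g_1$ of $D^4$ to a self-homeomorphism of $(D^4,R)$ carrying $D_1$ to $D_2$: JSJ rigidity is a purely $3$-dimensional phenomenon, nothing guarantees that $g_1$ preserves, even up to isotopy, the embedded $3$-manifolds along which $N_{D_1'}$ is assembled from the satellite pieces, and the uniqueness statement of Theorem~\ref{thm:IntroZTheorem} (isotopy rel.\ boundary of $\Z$-discs) does not let you recognise those pieces inside $N_{D_1'}$ and $N_{D_2'}$ and match them up under $g_1$. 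In addition, the assertion that a symmetry swapping the lagrangians of $\Bl(R)$ ``must swap $\eta_1$ and $\eta_2$ up to isotopy'' conflates classes in the Alexander module with isotopy classes of curves.

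The descent is in any case unnecessary: you already have everything needed for the contradiction at the level of $K$. If $g_1(D_1')=D_2'$, then $g_1$ restricts to a homeomorphism $N_{D_1'}\to N_{D_2'}$ and to a self-homeomorphism $f$ of $X_K$, so $f_*$ carries the induced lagrangian $P_{D_1'}=P_1$ to $P_{D_2'}=P_2$ in $H_1(X_K;\Z[t^{\pm 1}])$. On the other hand, by your Stage~2 the (suitably isotoped) map $f$ preserves each JSJ torus $T_i=\partial\overline{\nu\eta_i}$, and a longitude of $T_i$ generates $P_i$ (the pieces $X_{J_i}$ contribute nothing to the Alexander module since $\Delta_{J_i}\doteq 1$), so $f_*P_1=P_1\neq P_2$, a contradiction. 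This is exactly how the paper argues, phrased there as: the homeomorphism would have to exchange the two JSJ tori, which is impossible because $X_{J_1}\not\cong X_{J_2}$. Finally, two repairs are needed in Stage~2: hyperbolicity of $9_{46}$ is not the relevant input — what is needed is that the central JSJ piece, the exterior of the link $R\cup\eta_1\cup\eta_2$, is atoroidal (the paper verifies that this link is hyperbolic using SnapPy), and you must also check that the tori $T_i$ are incompressible (the longitude of $T_i$ is nontrivial in $\pi_1(X_{R,\eta})$ since it generates the Alexander module of $R$, and the meridian is nontrivial in $\pi_1(X_{J_i})$ since $J_i$ is knotted).
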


\begin{proof}
First, we may construct a $G$-homotopy ribbon disc $D_1$ for $K$ by cutting the left hand band via a saddle move, to obtain the $(2,0)$ cable of $J_2$, and then capping this off with two parallel copies of the $\Z$-homotopy ribbon disc for $J_2$ whose existence is guaranteed by the~$\Delta_{J_2}(t) = 1$ condition. That this is a $G$-homotopy ribbon disc follows from the same calculation as in Section~\ref{sec:Examples}: two parallel copies of the $\Z$-homotopy ribbon disc for $J_2$ in $D^4$ have complement with fundamental group free of rank two generated by the meridians to the two components, just like the standard slice discs for the unlink given by the dotted circles in Figure~\ref{fig:Kirby}.

Construct a similar $G$-homotopy ribbon disc $D_2$ for $K$ by cutting the right hand band. There are still only two lagrangians for the Blanchfield form, so there are still only at most two $G$-homotopy ribbon discs up to ambient isotopy by Theorem~\ref{thm:Intro}.  To complete the proof of  Theorem~\ref{thm:without-rel-bdy} we need to argue that there is no isotopy of $K$ interchanging the two lagrangians.  If there were such an isotopy, then it would induce a self-homeomorphism $F \colon X_K \to X_K$ interchanging the classes of $\eta_1,\eta_2 \in H_1(X_K;\Z[t^{\pm 1}])$.

Recall the Jaco-Shalen-Johannson (JSJ) theorem~\cite[Theorem~1.9]{Hatcher-3manifolds}: \emph{let $M$ be a compact, irreducible, orientable 3-manifold. There is a collection $T$ of disjoint incompressible tori such that each component of $M$ cut along $T$ is either atoroidal (every incompressible torus is boundary parallel) or a Seifert manifold.  A minimal collection of such $T$ is unique up to isotopy.}

The knot exterior $X_K$ is certainly compact, orientable, and irreducible.  We need to identify the JSJ tori: they correspond to the satellite construction.

\begin{claim}
  The JSJ pieces of the knot exterior $X_K$ are $X_{R,\eta} := X_{R} \setminus (\nu \eta_1 \cup \nu \eta_2)$  together with the knot exteriors $X_{J_1}$ and $X_{J_2}$. The JSJ tori are $T_i := \partial \overline{\nu \eta_i}$, $i=1,2$.
\end{claim}

 \begin{proof}
 To prove the claim, first we argue that the tori $T_i$ are incompressible.
To see this, note that the longitude of $T_i$ is a generator of the Alexander module of $R$, therefore is nontrivial in $\pi_1(X_{R})$, so also in $\pi_1(X_{R,\eta})$.
The meridian of $T_i$ is a longitude in $X_{J_i}$, so is nontrivial in~$\pi_1(X_{J_i})$ by the loop theorem and the fact that $J_i$ is knotted.

 Next, both $J_1$ and $J_2$ are hyperbolic knots, so $X_{J_1}$ and $X_{J_2}$ are atoroidal.
  Similarly, using SnapPy, we checked that the link $R \cup \eta_1 \cup \eta_2$ is hyperbolic, and so $X_{R,\eta}$ cannot be decomposed further along tori.
   This completes the proof of the claim on the JSJ decomposition of $X_K$.
\end{proof}

 Now we show that there is no isotopy of $K$ interchanging the two lagrangians.
If there were, there would be a self-homeomorphism of $X_K$ with the same effect.
By the JSJ theorem it would have to switch the two JSJ tori, up to an isotopy of the self-homeomorphism.
Note that a longitude of the torus $\partial \overline{\nu \eta_i}$ generates the lagrangian $P_i$, for $i=1,2$.
But the JSJ pieces $X_{J_1}$ and $X_{J_2}$ are not homeomorphic, so the tori $\partial \overline{\nu \eta_i}$ and cannot be exchanged by any homeomorphism.  Therefore the two slice discs $D_1$ and $D_2$ are not ambiently isotopic.
\end{proof}

\bibliography{biblioSliceDiscs}
\bibliographystyle{alpha}

\end{document}